\date{}
\numberwithin{equation}{section}
\newtheorem{definicion}{Definition}[section]
\newtheorem{definition}[definicion]{Definition}
\newtheorem{Lemma}[definicion]{Lemma}
\newtheorem{Theorem}[definicion]{Theorem}
\newtheorem{theorem}[definicion]{Theorem}
\newtheorem{Corollary}[definicion]{Corollary}
\newtheorem{remark}[definicion]{Remark}
\newenvironment{Proof}{\noindent\bf Proof \rm}{$\hfill
\square$}
\newcounter{ecuacionDef} \setcounter{ecuacionDef}{0}
\renewcommand{\theecuacionDef}{\arabic{ecuacionDef}}
\newcounter{numeroAxioma} \setcounter{numeroAxioma}{0}
\renewcommand{\thenumeroAxioma}{\arabic{numeroAxioma}}
\begin{document}

 \newcounter{thlistctr}
 \newenvironment{thlist}{\
 \begin{list}%
 {\alph{thlistctr}}%
 {\setlength{\labelwidth}{2ex}%
 \setlength{\labelsep}{1ex}%
 \setlength{\leftmargin}{6ex}%
 \renewcommand{\makelabel}[1]{\makebox[\labelwidth][r]{\rm (##1)}}%
 \usecounter{thlistctr}}}%
 {\end{list}}

\title[Amalgamation Property in Almost Gautama Algebras]{Amalgamation Property 
in the subvarieties of \\
Gautama and Almost Gautama Algebras} 

\author[J. M. CORNEJO \and 
H. P. SANKAPPANAVAR] {Juan M. CORNEJO* \and 
Hanamantagouda P. SANKAPPANAVAR**}

\newcommand{\acr}{\newline\indent}

\address{\llap{*\,} Departamento de Matem\'atica\acr
Universidad Nacional del Sur\acr
Alem 1253, Bah\'ia Blanca, Argentina\acr
INMABB - CONICET}

\email{jmcornejo@uns.edu.ar}

\address{\llap{**\,}Department of Mathematics\acr
                              State University of New York\acr
                              New Paltz, New York, 12561\acr
                              U.S.A.}

\email{sankapph@hawkmail.newpaltz.edu}

\subjclass[2010]{Primary: 03C05, 06E75, 08B26; 
Secondary:  08B15, 03G25, 03B50.}

\keywords{regular double Stone algebra, regular Kleene Stone algebra, Gautama algebra, Almost Gautama algebra, subdirectly irreducible algebra, simple algebra, 
discriminator variety, amalgamation property, transferability property (TP), having enough injectives (EI), Embedding Property, Bounded Obstruction Property, model companion.}
\maketitle

\begin{abstract}
Gautama algebras were introduced recently, 
as a common generalization of regular double Stone algebras and regular Kleene Stone algebras.  Even more recently,  Gautama algebras were further generalized to Almost Gautama algebras ($\mathbb{AG}$ for short).
The main purpose of this paper 
is to investigate the Amalgamation Property (AP, for short) in the subvarieties of the variety  $\mathbb{AG}$.   In fact, we show that, of the eight nontrivial subvarieties of  $\mathbb{AG}$,  
only four varieties, namely those of Boolean algebras, of regular double Stone algebras, of regular Kleene Stone algebras and of De Morgan Boolean algebras have the AP and the remaining four do not have the AP.  We give several applications of this result; in particular,
we examine the following properties for the subvarieties of $\mathbb{AG}$: transferability property (TP), having enough injectives (EI), Embedding Property, Bounded Obstruction Property and having a model companion.
\end{abstract}

\medskip 
\section{ INTRODUCTION}

Around the middle of the nineteenth century, Boole introduced 
(see \cite{Bo47, Bo54}) Boole's partial algebras in an attempt to cast the classical propositional logic into an algebraic setting.  About ten years later, Jevons \cite{Je64} gave a modification of those partial algebras, by replacing Boole's partial operations with total operations.   Around the beginning of the twentieth century, those modified algebras came to be known as ``Boolean algebras.''   Stone algebras--a generalization of Boolean algebras-- were initiated  in 1950's by Gr\"atzer and Schmidt (see \cite{GrSc57}) in an attempt to answer a problem of Stone.  Double Stone algebras arose in 1970's by considering an expansion of a Stone algebra obtained by adding the dual Stone operation.   
Soon thereafter, regular double Stone algebras were introduced by Varlet in 1973 to characterize those double Stone algebras which are congruence-regular, by means of a regularity condition which was a quasi-equation.  Katri\v{n}\'{a}k \cite{Ka73} studied this quasi-equation and showed that it is equivalent to an identity (R), thus showing that the class of regular double Stone algebras is a subvariety of the variety of double Stone algebras.  By suitably modifying Katri\v{n}\'{a}k's identity, Sankappanavar \cite{Sa86} introduced a notion of regularity in the variety of Kleene Stone algebras--actually in a much larger variety-- from which it was clear that the regular   
Kleene Stone algebras form a variety. 

The definitions of regular double Stone algebras and regular Kleene Stone algebras 
 and several results about these two classes are quite similar (see Section 2).  This similarity has recently given rise to a new variety of algebras called ``Gautama algebras'' (see \cite{Sa22}), 
of which the varieties of regular double Stone algebras and regular Kleene Stone algebras are subvarieties.  Even more recently, in \cite{CoSa23b}, Gautama algebras are further generalized to Almost Gautama algebras ($\mathbb{AG}$ for short) by a slight weakening of one of their defining identities.
 
The main goal of this paper 
is to investigate the Amalgamation Property (AP, for short) in the subvarieties of the variety  $\mathbb{AG}$.   In fact, we show in our main result (Theorem 3.24) that, of the eight nontrivial subvarieties of  $\mathbb{AG}$,  
only four varieties, namely those of Boolean algebras, of regular double Stone algebras, of regular Kleene Stone algebras and of De Morgan Boolean algebras have the AP and the remaining four do not have the AP.  We give several applications of this result; More specifically,
we examine such properties for the subvarieties of $\mathbb{AG}$ as transferability property (TP), having enough injectives (EI), Embedding Property, Bounded Obstruction Property and having a model companion.

 The paper is organized as follows: In Section 2, we present some preliminary concepts and known results that will be needed in the rest of the paper.  Section 3, the heart of this paper, gives a complete description of the subvarieties of the variety of Almost Stone algebras having the amalgamation property.  The main theorem (Theorem 3.24) 
describes precisely which of the eight nontrivial subvarieties of the variety of Almost Gautama algebras have the Amalgamation Property.   
 In section 4, we give applications of our main theorem, combined with some earlier known general results.  In fact, we characterize the subvarieties of $\mathbb{AG}$ having the properties: (TP), (EI), Embedding property, Bounded Obstruction Property and the property of having a model companion.

\smallskip

\section{Preliminaries}

It is assumed here that the reader has had some familiarity with lattice theory and universal algebra (see \cite{BaDw74, BuSa81}, for example).  
As such, for notions, notations and results assumed here, the reader can refer to these or other relevant books.  It will be helpful, but not necessary, to have some familiarity with \cite{Sa22, CoSa23b}.

Recall that an algebra $\mathbf A =\langle A, \lor,\land, ^c, 0 ,1\rangle$ is a Boolean algebra if $\mathbf A$ is a complemented distributive lattice.   
 Let $\mathbb{BA}$ denote the variety of Boolean algebras.  It is well-known that $\mathbb{BA}= \mathbb{V(\mathbf{2})}$ (i.e. the variety generated by $\{\mathbf{2}\})$, where $\mathbf{2}$ denotes the 2-element Boolean algebra with the universe $\{0,1\}$.    
 In what follows, 
we will be casual in that any 2-element Boolean algebra in other languages will also be denoted by $\mathbf{2}.$

Among the many generalizations of Boolean algebres, we consider here the following three: \\ 
(1) Stone algebras, (2) dual Stone algebras and (3) Kleene algebras.

 Stone algebras are a subvariety of the variety of pseudocomplemented lattices.  
We present here the following definition
given in 
 \cite[Corollary 2.8]{Sa87c}.

\smallskip
An algebra $\mathbf{A}=\langle A, \lor, \land, ^*, 0, 1\rangle$ is a {\bf distributive pseudocomplemented lattice} ($p$-algebra, for short) if 
$\mathbf{A}$ satisfies the following:

\rm(1) $\langle A, \lor, \land, 0, 1\rangle$ is a bounded distributive lattice,

(2) the operation $^*$ satisfies the identities: 

\quad  (a) $0^* \approx 1$, 
 
\quad   (b) $1^* \approx 0$, 
  
\quad   (c) $(x \lor y)^* \approx x^* \land y^*$, 
  
\quad   (d) $(x \land y)^{**} \approx x^{**} \land y^{**}$, 
  
\quad   (e)  $x \leq x^{**}$,    
  
\quad   (f) $x^* \land x^{**} \approx 0$. 

Note that the identity (f) can be replaced by the identity: $x \land x^* \approx 0$.  

 A $p$-algebra $\mathbf{A}$ is a {\bf Stone algebra} if $\mathbf{A}$ satisfies the identity:
 
\quad   (St) $x^* \lor x^{**} \approx 1$ (Stone identity).

Dual Stone algebras are, of course, defined dually.

\smallskip
The variety of Kleene algebras is a subvariety of that of De Morgan algebras, which were first introduced by Moisil \cite{Mo35}) in 1935  
(see also \cite{BiRa57} and \cite{Ka58}).  

\medskip
An algebra $\langle A, \lor, \land, ', 0, 1\rangle$ is a {\bf De Morgan algebra} if

\quad (1) $\langle A, \lor, \land, 0, 1\rangle$ is a bounded distributive lattice,

\quad (2) $0' \approx 1$ and $1' \approx 0$,

\quad (3) $(x \land y)' \approx x' \lor y'$ ($\land$-De Morgan law),
 
\quad (4) $x'' \approx x$  (Involution).

A De Morgan algebra is a {\bf Kleene algebra} if it satisfies:

\quad (5) $x \land x' \leq y \lor y'$ (Kleene identity).

\medskip

Let a 3-element chain (viewed as a bounded distributive lattice) be denoted by $\mathbf{3}$. 
 The three operations, namely the pseudocomplement, the dual pseudocomplement and the De Morgan operation on the three-element chain are shown below in Figure 1.\\

\vspace{.5cm}
\setlength{\unitlength}{1cm} \vspace*{.5cm}
\begin{picture}(15,2)
\put(1.5,-.4){\circle*{.15}}          

\put(1.5,1){\circle*{.15}}

\put(1.5,2.3){\circle*{.15}}

\put(1.1,-.4){$0$}                      

\put(1.1,.9){$a$}

\put(1.1,2.1){$1$}

\put(.1,.7){${\bf 3}$\ :}

\put(1.5,-.4){\line(0,1){2.7}}

\hspace{2cm}
\begin{tabular}{r|rrr}
$^*$ & 0\ & a & 1\\
\hline
   & 1\ & 0\ & 0\\
\end{tabular} \hspace{.5cm} 
\begin{tabular}{r|rrr}
$^+$ & 0 & a &1 \\
\hline
   & 1 & 1 & 0\\
\end{tabular} \hspace{.5cm} 
\begin{tabular}{r|rrr}
$'$ & 0 & a &1 \\
\hline
   & 1 & a & 0\\
\end{tabular} \hspace{.5cm}
\put(-5,-1.4){Figure 1}  \label{Fig1}  
\end{picture}

\vspace{2cm}

Expansions of the language of bounded lattices by adding
two unary operation symbols corresponding to any two of the above three unary operations give rise to the following three algebras on the 3-element chain: 
 $\mathbf{3_{dblst}} = \langle 3, \lor, \land, ^*, ^+, 0, 1\rangle$, 
 $\mathbf{3_{klst}} = \langle 3,  \lor, \land, ^*, ', 0, 1\rangle$, and
 $\mathbf{3_{kldst}} = \langle 3, \land,  \lor, ^+, ', 0, 1\rangle$.
 The algebra $\mathbf{3_{dblst}}$ 
 is known as a ``regular double Stone algebra'' and 
 $\mathbf{3_{klst}}$ as a ``regular Kleene Stone algebra'', while
 $\mathbf{3_{kldst}}$, 
 being the dual of $\mathbf{3_{klst}}$, would not be of much interest to us in this paper.   

An algebra $\mathbf{A}=\langle A, \lor, \land, ^*, ^+, 0, 1\rangle$ is a {\bf regular double Stone algebra} if

\rm(1)  $\langle A, \lor, \land, ^*, 0, 1\rangle$ is a Stone algebra,

(2)  $\langle A, \lor, \land, ^+, 0, 1\rangle$ is a dual Stone algebra,

(3)  $\mathbf{A}$ satisfies the identity:

\qquad (R)  $x \land x^+ \leq y \lor y^*$;
 or, equivalently, 
 
 \qquad (R1) $x \land x^{+*+}  \leq y \lor y^*$.

The variety of regular double Stone algebras is denoted by $\mathbb{RDBLS}\rm t$.   This variety was introduced in \cite{Ka73} and has later been investigated in many articles, see, for example, \cite{AdSaCa19, CoKiSa23a, Sa11, Sa22}. 

An algebra $\mathbf{A}=\langle A, \lor, \land, ^*, ', 0, 1\rangle$ is a {\bf regular Kleene Stone algebra} if

\rm(1)  $\langle A, \lor, \land, ^*, 0, 1\rangle$ is a Stone algebra,

(2)  $\langle A, \lor, \land, ', 0, 1\rangle$ is a Kleene algebra,

(3)  $\mathbf{A}$ satisfies the identity:

\qquad \rm(R1)  $x \land x'{^*}' \leq y \lor y^*$ \quad (Regularity).

The variety of regular Kleene Stone algebras is denoted by $\mathbb{RKLS}\rm t$.
This variety was introduced in \cite{Sa86} and has later been investigated by several  authors--see, for example, \cite{AdSaCa20, CoSa22, CoKiSa23a,  CoSa24a, CoSa24b, Sa11, Sa22}. 
\medskip

The following two theorems list some of the known properties of the varieties $\mathbb{RDBLS}\rm t$ and $\mathbb{RKLS}\rm t$.
\begin{Theorem}  \cite{Sa11} \label{T1}  
\begin{thlist}
\item
[i]  $\mathbb{RDBLS}\rm t = \mathbf{V(3_{dblst})}$, 

\item
[ii] The variety $\mathbb{RDBLS}\rm t$ is a discriminator variety; and so 
 $\mathbf{3_{dblst}}$ is quasiprimal; and  
$\mathbb{BA}$ is the only nontrivial proper subvariety of $\mathbb{RDBLS}\rm t$. 
\end{thlist}
\end{Theorem}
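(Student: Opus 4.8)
The plan is to prove both parts at once by first determining the subdirectly irreducible members of $\mathbb{RDBLS}\rm t$. The easy inclusion in (i) is immediate: one checks by a direct computation on the three-element chain that $\mathbf{3_{dblst}}$ satisfies all the defining identities, so $\mathbf{V(3_{dblst})}\subseteq\mathbb{RDBLS}\rm t$. Everything else rests on the claim that the only nontrivial subdirectly irreducible regular double Stone algebras are $\mathbf{2}$ and $\mathbf{3_{dblst}}$.

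To establish that claim, I would take a nontrivial subdirectly irreducible $\mathbf{A}\in\mathbb{RDBLS}\rm t$ and first recall the standard fact that in a double Stone algebra every element satisfies $x^{++}\le x\le x^{**}$, where $x^{**}$ is complemented (its complement being $x^*$, by the Stone identity $x^*\lor x^{**}\approx 1$ together with $x^*\land x^{**}\approx 0$) and dually $x^{++}$ is complemented with complement $x^+$. If there were a complemented element $e\notin\{0,1\}$, then the map $x\mapsto(x\land e,\,x\land e^*)$ would split $\mathbf{A}$ as a nontrivial direct product of two regular double Stone algebras, contradicting subdirect irreducibility; hence the only complemented elements of $\mathbf{A}$ are $0$ and $1$. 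Consequently $x^{**},x^{++}\in\{0,1\}$ for every $x$, and since $x^{++}\le x\le x^{**}$ the only possibilities are $x=0$ (when $x^{**}=0$), $x=1$ (when $x^{++}=1$), or a \emph{proper} element with $x^{++}=0$ and $x^{**}=1$, i.e. $x^+=1$ and $x^*=0$.

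The decisive step, and the one place where regularity is essential, is to observe that there is at most one proper element: if $x$ and $y$ are both proper then $x^+=1$ and $y^*=0$, so the identity (R), namely $x\land x^+\le y\lor y^*$, reads $x\le y$, and by symmetry $x=y$. This forces $|A|\le 3$, so $\mathbf{A}\cong\mathbf{2}$ or $\mathbf{A}\cong\mathbf{3_{dblst}}$. Part (i) now follows from Birkhoff's subdirect representation theorem: every member of $\mathbb{RDBLS}\rm t$ is a subdirect product of copies of $\mathbf{2}$ and $\mathbf{3_{dblst}}$, and since $\mathbf{2}$ is isomorphic to the subalgebra $\{0,1\}$ of $\mathbf{3_{dblst}}$, all such algebras lie in $\mathbf{V(3_{dblst})}$; combined with the easy inclusion this gives $\mathbb{RDBLS}\rm t=\mathbf{V(3_{dblst})}$.

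For (ii) I would exhibit the ternary discriminator as a term on $\mathbf{3_{dblst}}$, which yields quasiprimality and, since the subdirectly irreducibles are exactly the two subalgebras $\mathbf{2}$ and $\mathbf{3_{dblst}}$ (each necessarily simple), shows that $\mathbb{RDBLS}\rm t$ is a discriminator variety. The construction uses $x^{**}$ and $x^{++}$ as two central Boolean coordinates separating $0,a,1$ (their values are $(0,0),(1,0),(1,1)$, respectively): a central equality detector is $e(x,y)=\bigl[(x^{**}\land y^{**})\lor(x^{*}\land y^{*})\bigr]\land\bigl[(x^{++}\land y^{++})\lor(x^{+}\land y^{+})\bigr]$, which is central and equals $1$ iff $x=y$, whence $t(x,y,z)=(e(x,y)\land z)\lor(e(x,y)^*\land x)$ realizes the discriminator. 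Finally, the subvariety count is immediate from the classification: the only subdirectly irreducibles being $\mathbf{2}$ and $\mathbf{3_{dblst}}$, the nontrivial varieties are $\mathbf{V(2)}=\mathbb{BA}$ and $\mathbf{V(3_{dblst})}=\mathbb{RDBLS}\rm t$, so $\mathbb{BA}$ is the unique nontrivial proper subvariety. The main obstacle throughout is the subdirect-irreducible classification, and within it the regularity step above; once that is secured, quasiprimality and the subvariety lattice are routine.
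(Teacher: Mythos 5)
Your proposal is correct, but there is nothing in the paper to compare it against: Theorem \ref{T1} is stated as a known result imported from \cite{Sa11}, and the paper gives no proof of it, so your argument is necessarily an independent reconstruction. Your route is the natural one and it works: classify the nontrivial subdirectly irreducible members as $\mathbf{2}$ and $\mathbf{3_{dblst}}$, then get (i) from Birkhoff's subdirect representation theorem and (ii) from an explicit discriminator term. The two pillars of your classification are sound. First, a complemented element $e\notin\{0,1\}$ does split a double Stone algebra directly; note, though, that this is the one step that leans on a fact worth spelling out, namely that $x\mapsto x\land e$ preserves $^*$ and $^+$, which requires the identities $(x\land y)^*\approx x^*\lor y^*$ and $(x\land y)^+\approx x^+\lor y^+$ (these are exactly where the Stone and dual Stone identities enter; in a bare double $p$-algebra a complemented element need not be central). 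Second, your use of regularity (R) to show there is at most one element with $x^*=0$ and $x^+=1$ is exactly the right (and the decisive) step; it is also the same mechanism the present paper exploits later, e.g.\ in the proof of Theorem \ref{Gamal1}, where (R1) forces two ``proper'' elements of different character to coincide. Your discriminator term also checks out: on $\mathbf{3_{dblst}}$ the pair $(x^{**},x^{++})$ takes the values $(0,0),(1,0),(1,1)$ at $0,a,1$, your $e(x,y)$ is a $\{0,1\}$-valued equality test, and $t(x,y,z)=(e(x,y)\land z)\lor(e(x,y)^*\land x)$ is the ternary discriminator, giving quasiprimality of $\mathbf{3_{dblst}}$ and hence that $\mathbb{V}(\mathbf{3_{dblst}})$ is a discriminator variety; the subvariety count then follows from the classification of the subdirectly irreducibles exactly as you say.
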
 

\begin{Theorem} \cite{Sa11}  \label{T2}                    
\begin{thlist}  

\item
[i]  $\mathbb{RKLS}\rm t= \mathbb{V}\mathbf{(3_{klst})}$, 

\item
[ii]  $\mathbb{RKLS}\rm t$ is a discriminator variety, 
and $\mathbf{3_{klst}}$ is quasiprimal,  
\end{thlist}
\end{Theorem}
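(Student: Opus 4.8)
The plan is to reduce the whole statement to the already-established facts about $\mathbb{RDBLS}\rm t$ in Theorem~\ref{T1}. Given $\mathbf A = \langle A, \lor, \land, {}^{*}, {}', 0, 1\rangle \in \mathbb{RKLS}\rm t$, I would introduce the derived unary term $x^{+} := ((x')^{*})'$ and first show that $\mathbf A_{\mathrm{dbl}} := \langle A, \lor, \land, {}^{*}, {}^{+}, 0, 1\rangle$ is a regular double Stone algebra. Since $'$ is an antitone involution of the lattice, conjugating the pseudocomplement by $'$ produces the dual pseudocomplement: using $u \le w^{*} \Leftrightarrow u \land w = 0$ one checks $x^{+} \le y \Leftrightarrow x \lor y = 1$, so $^{+}$ is exactly the dual pseudocomplement. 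De Morgan-dualizing the Stone identity $z^{*} \lor z^{**} \approx 1$ yields the dual Stone identity $z^{+} \land z^{++} \approx 0$, and the regularity axiom (R1) is, by the definition of $^{+}$, literally the double-Stone regularity condition (R) $x \land x^{+} \le y \lor y^{*}$. Hence $\mathbf A_{\mathrm{dbl}} \in \mathbb{RDBLS}\rm t$; and, evaluated on $\mathbf{3_{klst}}$, the term $^{+}$ agrees with the $^{+}$ of $\mathbf{3_{dblst}}$, so the $\{\lor, \land, {}^{*}, {}^{+}\}$-reduct of $\mathbf{3_{klst}}$ is $\mathbf{3_{dblst}}$.

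By Theorem~\ref{T1}, $\mathbb{RDBLS}\rm t = \mathbf V(\mathbf{3_{dblst}})$ is a discriminator variety whose only nontrivial subdirectly irreducible members are $\mathbf{2}$ and $\mathbf{3_{dblst}}$. Therefore $\mathbf A_{\mathrm{dbl}}$ is a subdirect product of copies of $\mathbf{2}$ and $\mathbf{3_{dblst}}$; equivalently, there is a point-separating family of $\{\lor, \land, {}^{*}, {}^{+}\}$-homomorphisms $h_{i} \colon \mathbf A \to \mathbf{3}$, each with image a subalgebra of $\mathbf{3_{dblst}}$. To upgrade each $h_{i}$ to a homomorphism into $\mathbf{3_{klst}}$ I must show that it also preserves $'$. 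For this I would prove that the identity \[ x' \approx x^{*} \lor (x \land x^{+}) \] holds throughout $\mathbb{RKLS}\rm t$ (it holds on $\mathbf{3}$, where the right-hand side realizes $0 \mapsto 1,\ a \mapsto a,\ 1 \mapsto 0$). Granting this, $'$ is a $\{\lor, \land, {}^{*}, {}^{+}\}$-term, so every $h_{i}$ automatically preserves $'$ and hence is a homomorphism $\mathbf A \to \mathbf{3_{klst}}$. Consequently $\mathbf A$ embeds into a power of $\mathbf{3_{klst}}$, giving $\mathbb{RKLS}\rm t \subseteq \mathrm{ISP}(\mathbf{3_{klst}}) \subseteq \mathbf V(\mathbf{3_{klst}})$; the reverse inclusion is immediate from $\mathbf{3_{klst}} \in \mathbb{RKLS}\rm t$. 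This proves (i).

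Part (ii) then follows quickly. The two mutually inverse term definitions $x^{+} = ((x')^{*})'$ and $x' = x^{*} \lor (x \land x^{+})$ make $\mathbf{3_{klst}}$ term-equivalent to $\mathbf{3_{dblst}}$; since quasiprimality is invariant under term equivalence and $\mathbf{3_{dblst}}$ is quasiprimal by Theorem~\ref{T1}, $\mathbf{3_{klst}}$ is quasiprimal. (Concretely, a discriminator term of $\mathbf{3_{dblst}}$ becomes a discriminator term of $\mathbf{3_{klst}}$ after substituting $((\,\cdot\,')^{*})'$ for every $^{+}$.) As a variety generated by a quasiprimal algebra is a discriminator variety, part (i) gives that $\mathbb{RKLS}\rm t$ is a discriminator variety, completing (ii).

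The crux, and the step I expect to be hardest, is the identity $x' \approx x^{*} \lor (x \land x^{+})$, i.e.\ the definability of the Kleene negation from the two pseudocomplements. This is exactly where the regularity condition (R1) is indispensable: on a Kleene Stone algebra that is not regular the identity fails (for example, on the four-element chain with $'$ interchanging the two middle elements, the derived term sends the lower middle element to itself rather than to its De Morgan image), so without (R1) the operation $'$ genuinely carries information not recoverable from $^{*}$ and $^{+}$, and the reduction to Theorem~\ref{T1} collapses. I would therefore derive the identity by a direct computation from the Stone, Kleene and regularity axioms, with (R1) used to force the nontrivial inequality $x' \le x^{*} \lor (x \land x^{+})$.
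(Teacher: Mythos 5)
Your reduction framework is sound, and it is genuinely different from how this result is actually established: the paper does not prove Theorem \ref{T2} at all (it is imported from \cite{Sa11}, where it comes out of the structure theory of the subdirectly irreducible algebras and discriminator machinery, not from a reduction to the double Stone case). The translation itself checks out: $x^{+}:=(x'^{*})'$ is the dual pseudocomplement, since $x^{+}\le y \Leftrightarrow y'\le x'^{*} \Leftrightarrow x'\land y'=0 \Leftrightarrow x\lor y=1$; the dual Stone identity is the De Morgan dual of the Stone identity evaluated at $x'$; and (R1) literally becomes (R). Part (ii) is also fine, since the mutual term-definability only has to be checked on the three-element universe, and quasiprimality is a clone-theoretic property.

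The gap is exactly the step you flag as the crux: the identity $x'\approx x^{*}\lor(x\land x^{+})$ must be derived in every \emph{axiomatically given} member of $\mathbb{RKLS}\rm t$, and you never derive it---you only verify it on $\mathbf{3_{klst}}$ (which would be circular to use) and promise a ``direct computation.'' Given Theorem \ref{T1}, that identity is equivalent to the whole theorem, so as written the proof is missing its entire mathematical content; moreover your localization of the difficulty is off, since \emph{both} inequalities fail without (R1): in the four-element Kleene Stone chain $0<a<b<1$ one has $b\land b^{+}=b\not\le a=b'$, so $x\land x^{+}\le x'$ already needs regularity. The good news is that the gap is fillable by a short derivation, so your route does work. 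First, in any Kleene algebra, $x\land y=0$ implies $y\le x'$: from $x'\lor y'=1$ one gets $y=(y\land x')\lor(y\land y')$, and $y\land y'\le x\lor x'$ together with $y\land x=0$ forces $y\land y'\le x'$. Hence $x^{*}\le x'$, $x'^{*}\le x$, $x'\le x^{+}$, and $x^{**}\le (x^{*})'$. Next, substituting $x\mapsto x'$ and $y\mapsto x$ in (R1) gives $x'\land (x^{*})'\le x\lor x^{*}$, i.e.\ $(x\lor x^{*})'\le x\lor x^{*}$; since $(x^{*})'\lor x^{*}\ge x^{**}\lor x^{*}=1$, we get $x'=(x'\land (x^{*})')\lor(x'\land x^{*})=(x\lor x^{*})'\lor x^{*}\le x\lor x^{*}$. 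Finally, $x'=x'\land(x\lor x^{*})=(x\land x')\lor x^{*}\le x^{*}\lor(x\land x^{+})$ because $x'\le x^{+}$; conversely, $x\land x^{+}\le x'$ is, under the involution, exactly $x\le x'\lor x'^{*}$, which is the inequality just proved applied to $x'$, and together with $x^{*}\le x'$ this yields the identity. With this lemma inserted, your argument is complete and gives an alternative, reduction-style proof of the cited theorem.
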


The similarities of $\mathbb{RDBLS}\rm t$ and $\mathbb{RKLS}\rm t$   
led, recently, to the variety of Gautama algebras in \cite{Sa22}, named in honor and memory of Medhatithi Gautama and Aksapada Gautama, the founders of Indian Logic.
To define Gautama algebras, 
we need the dual of the notion of
an ``upper quasi-De Morgan algebra''  which was introduced    
in 1987.  
(We drop the word ``upper.'') 

Quasi-De Morgan algebras arose as a subvariety of semi-De Morgan algebras which were introduced in \cite{Sa87c} to unify pseudocomplemented lattices and De Morgan algebras.  We need the dual of this notion here.
  
An algebra $\mathbf A = \langle A , \lor, \land, ',  0, 1\rangle$ is a {\bf dually quasi-De Morgan algebra} if the following  conditions hold:

(a) $\langle A , \lor, \land, 0, 1\rangle$ is a bounded distributive lattice,

(b) The operation $'$ is a dual quasi-De Morgan operation; that is, $ '$ satisfies\

\quad (i)	 $0' \approx 1$ and $1' \approx 0$,

\quad (ii)	 $(x \land y)' \approx x' \lor y'$,

\quad (iii)	$(x \lor y)'' \approx x'' \lor y''$,

\quad (iv)	$x'' \leq x$.

The variety of dually quasi-De Morgan algebras is denoted by $\mathbb{DQD}$. \\

We can now define ``Gautama algebras.''

\begin{definition} \cite{Sa22}
An algebra $\mathbf{A} = \langle A, \lor, \land, ^*, ' , 0, 1\rangle$ is a {\bf Gautama algebra} if the following conditions hold:

\rm(a) $\langle A, \lor, \land, ^*,  0, 1\rangle$ is a Stone algebra,

(b) $\langle A, \lor, \land, ' , 0, 1\rangle$ is a dually quasi-De Morgan algebra,

(c) $\mathbf{A}$ is regular; i.e., $\mathbf{A}$ satisfies: 

\quad (R1) \quad $x \land x'{^*}' \leq y \lor y^*$,

(d)  $\mathbf{A}$ is star-regular; i.e., $\mathbf{A}$ satisfies the identity: 

\quad  \rm (*)  \quad   $x{^*}' \approx x^{**}$.
\end{definition}
Let $\mathbb{G}$ denote the variety of Gautama algebras.  It should be  remarked here that, in the presence of (b), (c), and (a'): $\langle A, \lor, \land, ^*,  0, 1\rangle$ is a $p$-algebra, it is known \cite{CoSa24c}, but still unpublished, that the star-regular identity implies the Stone identity. 

Clearly, $\mathbf{2}$, $\mathbf{3_{dblst}}$, $\mathbf{3_{klst}}$ are examples of algebras in $\mathbb{G}$. 
 
The following theorem, proved in \cite{Sa22}, gives a concrete description of the subdirectly irreducible algebras in the variety $\mathbb{G}$.

\begin{Theorem} \cite{Sa22}\label{Th1}
Let $\mathbf A \in \mathbb{G}$. Then the following are equivalent:

\rm(1) $\mathbf A$ is simple; 

(2) $\mathbf A$ is subdirectly irreducible; 

(3) $\mathbf A$ is directly indecomposable; 

(4) For every $x \in A$, \ $x \lor x^* =1$ \ implies \  $x=0$  or  $x=1$;           

(5) $\mathbf A \in \{\mathbf 2, \mathbf 3_\mathbf{dblst}, \mathbf 3_\mathbf{klst} \}$, up to isomorphism.  
\end{Theorem}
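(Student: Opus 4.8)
The plan is to establish the chain of implications $(1)\Rightarrow(2)\Rightarrow(3)\Rightarrow(4)\Rightarrow(5)\Rightarrow(1)$, with the bulk of the work concentrated in the two ``content-bearing'' steps $(3)\Rightarrow(4)$ and $(4)\Rightarrow(5)$. The implications $(1)\Rightarrow(2)$ and $(2)\Rightarrow(3)$ are immediate from general universal algebra: every simple algebra is subdirectly irreducible, and every subdirectly irreducible algebra is directly indecomposable (a nontrivial direct product decomposition would split the monolith). The final implication $(5)\Rightarrow(1)$ is also easy: one checks directly that each of $\mathbf{2}$, $\mathbf{3_{dblst}}$, $\mathbf{3_{klst}}$ is simple by inspecting its (very small) congruence lattice, or by invoking that $\mathbf{3_{dblst}}$ and $\mathbf{3_{klst}}$ are quasiprimal (Theorems~\ref{T1} and~\ref{T2}) and hence simple.

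For $(3)\Rightarrow(4)$ I would argue contrapositively. Suppose there is an element $x$ with $x\lor x^*=1$ but $x\neq 0$ and $x\neq 1$. In a Stone algebra, an element $e$ satisfying $e\lor e^*=1$ together with $e\land e^*=0$ (which holds for every element by the $p$-algebra axiom (f)) is a \emph{central} (complemented) element; in a bounded distributive lattice a complemented element induces a direct product decomposition via the principal filters and ideals it generates, namely $\mathbf{A}\cong (x]\times [x)$ as lattices. The real task here is to check that this lattice decomposition lifts to a decomposition in the full signature $\langle\lor,\land,^*,\,',0,1\rangle$, i.e. that the operations $^*$ and $'$ respect the factorization. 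I expect this to follow from the Stone identity controlling $^*$ on each factor and from the star-regular identity (*) $x^{*\prime}\approx x^{**}$ linking $'$ to $^*$, so that $'$ is forced to act coordinatewise as well. Since $x$ is neither $0$ nor $1$, both factors are nontrivial, contradicting direct indecomposability.

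The implication $(4)\Rightarrow(5)$ is the heart of the theorem and the step I expect to be the main obstacle. Assuming $(4)$, I would first show that the underlying Stone algebra reduct is very constrained: the only elements that can exist are $0$, $1$, and elements $a$ with $a\lor a^*\neq 1$. Because the $^*$-skeleton $\{x^{**}:x\in A\}$ of a Stone algebra is a Boolean algebra on which $\lor$ of an element and its complement is always $1$, condition $(4)$ forces the skeleton to be just $\{0,1\}$; hence every element has $x^{**}\in\{0,1\}$. This collapses the Stone structure so severely that $A$ must be a chain with at most one element $a$ strictly between $0$ and $1$ (any such $a$ has $a^*=0$ and $a^{**}=1$), giving $|A|\le 3$. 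The delicate part is then to pin down the two unary operations on the three-element case: one must determine $a'$ and $a^*$ and show that the only admissible dually quasi-De Morgan plus star-regular structures on $\mathbf{3}$ are exactly those of $\mathbf{3_{dblst}}$ (where $a'=a^+$, with $a'=1$) and $\mathbf{3_{klst}}$ (where $a'=a$), using axiom (*) and the involution-like axiom $x''\le x$ to exclude any other assignment. Ruling out the spurious candidates cleanly, rather than by brute-force case analysis, will require careful use of the star-regular identity together with the dual quasi-De Morgan laws, and this is where I anticipate the proof demanding the most care.
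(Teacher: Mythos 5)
A preliminary remark: the paper itself contains no proof of this theorem---it is imported verbatim from \cite{Sa22}---so your proposal can only be judged on its own merits, not against an in-paper argument. The organizational skeleton is right, and your plan for $(3)\Rightarrow(4)$ does go through essentially as you expect: if $x\lor x^*=1$, then $x$ and $x^*$ are mutual complements, so $x=x^{**}$ by uniqueness of complements in a distributive lattice, and star-regularity applied to $x$ and to $x^*$ gives $(x^*)'=x^{**}=x$ and $x'=(x^{**})'=x^{***}=x^*$. From these two equalities, the $\land$-De Morgan law, and the Stone-algebra identity $(u\land v)^*=u^*\lor v^*$, the two lattice factor congruences $u\equiv v \Leftrightarrow u\land x=v\land x$ (resp.\ $u\land x^*=v\land x^*$) are seen to respect $^*$ and $'$, yielding a nontrivial direct decomposition.

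The genuine gap is in $(4)\Rightarrow(5)$, at the sentence claiming that the collapse of the skeleton to $\{0,1\}$ already forces $A$ to be a chain with at most one middle element, ``giving $|A|\le 3$.'' That inference is false, and it cannot be repaired using only the axioms your proposal ever invokes (Stone reduct, dual quasi-De Morgan laws, star-regularity, $x''\le x$): consider the four-element chain $0<a<b<1$ with $0^*=1$, $a^*=b^*=1^*=0$, and $0'=a'=b'=1$, $1'=0$. This satisfies all of those axioms, its skeleton is $\{0,1\}$, and condition $(4)$ holds (since $a\lor a^*=a$ and $b\lor b^*=b$), yet $|A|=4$. What is missing is the regularity identity (R1), which appears nowhere in your proposal and is exactly the axiom that finishes the argument. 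Indeed, once the skeleton collapses, every nonzero element is dense ($y^*=0$ for $y\neq 0$); moreover $x\neq 1$ forces $x'\neq 0$ (otherwise $1=0'=x''\le x$), hence $(x')^*=0$ and, by star-regularity applied to $x'$, $((x')^*)'=(x')^{**}=0^*=1$. Then (R1) reads $x=x\land((x')^*)'\le y\lor y^*$ for all $y$; choosing $y$ strictly between $0$ and $1$ (so $y^*=0$) shows every element distinct from $1$ lies below every such $y$, and applying this to two middle elements in both orders shows there is at most one---only then does $|A|\le 3$ follow. With that repaired, your treatment of the three-element case is fine (indeed easier than you anticipate: $a'=0$ is excluded by $x''\le x$, so $a'\in\{a,1\}$, giving exactly $\mathbf{3_{klst}}$ and $\mathbf{3_{dblst}}$).
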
        

Soon after the chapter \cite{Sa22} appeared in print, we noticed that the star-regular identity implies the identity: $x{^*}'' \approx x^*$ and the algebra $\bf 4_{dmba}$ satisfies this weaker identity but not the star-regular identity.  These observations recently gave rise to a
further generalization of Gautama algebras in \cite{CoSa23b}, called
 ``Almost Gautama algebras.'' 

\begin{definition}
An algebra $\mathbf{A} = \langle A, \lor, \land, ^*, ' , 0, 1\rangle$ is an {\bf Almost Gautama algebra} if the following conditions hold:

\rm(a) $\langle A, \lor, \land, ^*,  0, 1\rangle$ is a Stone algebra,  

(b)  $\langle A, \lor, \land, ' , 0, 1\rangle$ 
is a dually quasi-De Morgan algebra,

(c) $\mathbf{A}$ is regular.  That is, $\mathbf{A}$ satisfies: 

\quad (R1) \  $x \land x'{^*}' \leq y \lor y^*$ (Regularity),

(d) $\mathbf{A}$ is Weak Star-Regular.  That is,  $\mathbf{A}$ satisfies the identity: 

 \quad \ ($^*\rm)_w$  \   $x{^*}'' \approx x^*$, (weak star-regularity),

(e) $\mathbf{A}$ satisfies the identity:

\quad  (L1) \  $(x \land x'^*)'^* \approx x \land x'^*$.
\end{definition}
Let $\mathbb{AG}$ denote the variety of Almost Gautama algebras.  It is recently known (see  \cite{CoSa24c}) that in the presence of (b), (c), (e) and (a'): $\langle A, \lor, \land, ^*,  0, 1\rangle$ is a $p$-algebra, the weak star-regular identity and the Stone identity are equivalent; and hence one of the two is redundant in the above definition of Almost Gautama algebra.

Note that the varieties $\mathbb{BA}$ of Boolean algebras, $\mathbb{RDBLS}\rm t$ of regular double Stone algebras, and $\mathbb{RKLS}\rm t$ of regular Kleene Stone algebras and the variety $\mathbb{G}$ of Gautama algebras are all subvarieties of the variety $\mathbb{AG}$ of Almost Gautama algebras.

In Figure 2, we describe a $4$-element 
algebra $\mathbf 4_{\mathbf{dmba}} := \langle \{0, a, b, 1\}, \lor, \land, ^*, ', 0, 1\rangle$, in which $*$ is the Boolean complement with $a^*=b$, $b^*=a$; and $0'=1$, $1'=0$, $a'=a$ and $b'=b.$  It is easy to see that $\mathbf 4_{\mathbf{dmba}}$
is an Almost Gautama algebra.  Observe that $\mathbf 4_{{\bf dmba}}$ is not a Gautama algebra (e.g., take $x:=b$ in $(^*)_w$).\\

\vspace{.4cm}
\setlength{\unitlength}{.7cm} \vspace*{1cm}
\begin{picture}(15,2)
 \put(9.75,3.8){\circle*{.28}}   

  \put(8.6,2.7){\circle*{.28}}  
  \put(10.9, 2.7){\circle*{.28}}  
 \put(9.7,1.5){\circle*{.28}} 

\put(9.9, 1.1){$0$}

\put(8.2,2.3){$b$}
\put(11.1,2.2){$b^*$}
\put(10.0,3.7){$1$}

\put(4.5,1.6){$\bf 4_{dmba}:$}

\put(9.7,1.5){\line(1,1){1.2}} 
\put(9.7,1.5){\line(-1,1){1.2}} 
\put(8.5,2.6){\line(1,1){1.2}}  
\put(10.95,2.6){\line(-1,1){1.2}} 
\put(8.6,0.2){Figure 2}
\end{picture}

The following theorem, which gives an explicit description of subdirectly irreducible Almost Gautama algebras, is proved in \cite{CoSa23b}.

\begin{Theorem} {\rm (}\cite[Theorem 3.12]{CoSa23b}{\rm )} \label{Main1}
Let $\mathbf A \in \mathbb{AG}$. Then the following are equivalent:

{\rm(1)} $\mathbf A$ is simple;

{\rm(2)} $\mathbf A$ is subdirectly irreducible;

{\rm(3)}  $\mathbf A$ is directly indecomposable;

{\rm(4)} $\mathbf A$  satisfies (SC): For every $x \in A$, \ $x \neq 1 \Rightarrow x \land x'^*=0$;

{\rm(5)} $\mathbf A \in \{\mathbf 2, \mathbf{3_{dblst}}, \mathbf{3_{dmst}}, \mathbf{4_{dmba}} \}$,
where $\mathbf{4_{dmba}}$  is the algebra in Figure 2.
\end{Theorem}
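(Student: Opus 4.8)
The plan is to prove the equivalences by the cycle $(5)\Rightarrow(1)\Rightarrow(2)\Rightarrow(3)\Rightarrow(4)\Rightarrow(5)$. The two implications $(1)\Rightarrow(2)$ and $(2)\Rightarrow(3)$ are the standard universal-algebraic facts that a simple algebra is subdirectly irreducible and that a subdirectly irreducible algebra is directly indecomposable, so they require no work here. For $(5)\Rightarrow(1)$ I would verify that each of the four algebras is simple: $\mathbf 2$ is trivially simple; the simplicity of $\mathbf{3_{dblst}}$ and $\mathbf{3_{dmst}}$ follows from the fact that $\mathbb{RDBLS}\mathrm{t}$ and $\mathbb{RKLS}\mathrm{t}$ are discriminator varieties (Theorems \ref{T1} and \ref{T2}) in which these are the generating quasiprimal algebras; and the simplicity of $\mathbf{4_{dmba}}$ I would check by direct inspection of its finite congruence lattice, exhibiting that the only lattice congruences compatible with $^*$ and $'$ are the trivial ones.

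The heart of the argument is $(3)\Rightarrow(4)$, which I would prove by contraposition. Assume (SC) fails, so there is $x\neq 1$ with $c:=x\wedge x'^*\neq 0$; note $c\neq 1$ as well, since $c=1$ would force $x=1$. The first key observation is that $c$ is a complemented skeleton element. Indeed, (L1) gives $(c')^*=c$, whence, using the $p$-algebra identity $z^{***}=z^*$, we get $c^{**}=(c')^{***}=(c')^*=c$; thus $c$ lies in the Boolean skeleton, and the Stone identity together with $c\wedge c^*=0$ yields $c\vee c^*=1$, so $c$ is complemented with complement $c^*$. The second, decisive step is to promote $c$ to a central element: I would show that the pair of lattice factor congruences determined by the complemented pair $(c,c^*)$ is compatible with the unary operations, giving a nontrivial decomposition $\mathbf A\cong \mathbf A/\theta\times\mathbf A/\theta^*$ and contradicting direct indecomposability. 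Compatibility with $\wedge,\vee,0,1$ is automatic because $c$ is a complemented (hence neutral) element of a distributive lattice, and compatibility with $^*$ follows from the Stone behaviour of the skeleton element $c$. Compatibility with $'$ is the real difficulty: here I would use the dual quasi-De Morgan laws together with weak star-regularity $(^*)_w$ and the fixed-point relation $(c')^*=c$ to show that meeting with $c$ commutes with $'$, thereby closing the argument.

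For $(4)\Rightarrow(5)$ I would run a classification under the hypothesis (SC). Observe first that, by the preceding argument, (SC) says precisely that $\mathbf A$ has no nontrivial central element of the above form, so $\mathbf A$ is sharply constrained. I would split on whether the Stone reduct is Boolean, i.e. whether $x\vee x^*=1$ holds for all $x$. In the Boolean case every element lies in the skeleton, $^*$ is the Boolean complement, and (SC) forces the dual quasi-De Morgan operation $'$ to have at most the two fixed points available in a four-element Boolean algebra; a short analysis then yields $\mathbf A\in\{\mathbf 2,\mathbf{4_{dmba}}\}$. In the non-Boolean case there is a proper dense element, and I would show that (SC) collapses the Stone structure to the three-element chain, on which the only dual quasi-De Morgan operations satisfying (R1), $(^*)_w$ and (L1) are the dual pseudocomplement and the Kleene negation, giving $\mathbf A\in\{\mathbf{3_{dblst}},\mathbf{3_{dmst}}\}$. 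Where convenient, the Gautama subcase can be routed through the known classification of subdirectly irreducible Gautama algebras (Theorem \ref{Th1}), leaving only the failure of star-regularity $(^*)$ to be treated, which is exactly what produces $\mathbf{4_{dmba}}$.

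I expect the main obstacle to be the compatibility of the factor congruence with the De Morgan-type operation $'$ in $(3)\Rightarrow(4)$: establishing that a single nontrivial fixed element $c=c'^*$ genuinely splits the whole Almost Gautama algebra, and not merely its lattice or its Stone reduct, is where the interplay of (R1), $(^*)_w$ and (L1) must be used in full. Once this central-element dictionary is in place, the enumeration in $(4)\Rightarrow(5)$ is intricate but routine.
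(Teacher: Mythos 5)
You should first be aware that this paper contains no proof of Theorem \ref{Main1} to compare against: the result is imported verbatim from \cite[Theorem 3.12]{CoSa23b} as a preliminary, so any proof you give is necessarily your own route. Judged on its own terms, your plan is sound and, as far as I can check, completable. The cycle is well chosen, and your $(3)\Rightarrow(4)$ dictionary is correct: from $c=x\wedge x'^*$ with $0<c<1$, (L1) gives $c'^*=c$, hence $c^{**}=c$ and (by Stone) $c\vee c^*=1$; moreover $(^*)_w$ applied to $c'$ gives $c''=c$, and $c\wedge c'=c'{}^*\wedge c'=0$ yields $'$-compatibility of the congruence $u\,\theta_c\,v \Leftrightarrow u\wedge c=v\wedge c$. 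One point your sketch glosses over: you must check \emph{both} factor congruences, and for $\theta_{c^*}$ the needed fact is $(c^*)'\wedge c^*=0$, which follows from $c'\leq c^*$ (meet $c'$ with $c\vee c^*=1$) and then $(c^*)'\leq c''=c$; this is exactly where $(^*)_w$ earns its keep, so your list of ingredients is right even though the statement ``meeting with $c$ commutes with $'$'' is not quite the whole check.

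In $(4)\Rightarrow(5)$ your Boolean branch works precisely as described: $(^*)_w$ forces $x''=x$ for all $x$, (SC) reads $x\leq x'$ for $x\neq 1$, whence every element outside $\{0,1\}$ is a fixed point of $'$, and the $\wedge$-De Morgan law forces distinct fixed points to be complementary, giving $|A|\leq 4$ and $\mathbf A\in\{\mathbf 2,\mathbf{4_{dmba}}\}$. The non-Boolean branch is where your sketch is thinnest and where ``intricate but routine'' undersells the work. Two substantive lemmas are needed: (i) uniqueness of the non-unit dense element, which follows from (SC) plus (R1) (for dense $f\neq 1$, (SC) gives $f'^*=0$, so (R1) yields $f\leq y\vee y^*$ for all $y$, and any two such elements are mutually comparable); and (ii) exclusion of a nontrivial skeleton element $s$ coexisting with the dense element $e$ --- here (SC) applied to $s'$ and to $s^*$ forces $s'=s$ and $(s^*)'=s^*$, one shows (after swapping $s$ and $s^*$ if necessary) that $s^*\leq e$ and $e=(s\wedge e)\vee s^*$, and the law $(x\vee y)''=x''\vee y''$ then gives a contradiction in both subcases $e'=e$ and $e'=1$. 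Nothing in your outline blocks this, but it is a genuine argument, not bookkeeping; had you been asked for a complete proof rather than a plan, this is the step I would press you on.
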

The following corollary is also taken from \cite[Corollary 3.33]{CoSa23b}.

\newpage
\begin{Corollary} \label{subvarietiesofAG}
\begin{thlist}
\item[1]
The lattice of nontrivial subvarieties of $\mathbb{AG}$ is isomorphic to the 
eight-element Boolean lattice  
with $\mathbb V(\mathbf 2)$ {\rm(}i.e., the variety of Boolean allgebras{\rm)} as the least element.  The Hasse diagram of this lattice is given in 
{\rm Figure 3}. 
  
 \item[2]
 The lattice of nontrivial subvarieties of $\mathbb{G}$ is isomorphic to the 4-element Boolean lattice with $\mathbb V(\mathbf 2)$
as the least element {\rm(}see {\rm Figure 3 also}{\rm)}.  
 \end{thlist}
\end{Corollary}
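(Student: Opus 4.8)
The plan is to derive the entire subvariety lattice from the finite list of subdirectly irreducibles furnished by Theorem~\ref{Main1}, via Jónsson's Lemma. First I would note that $\mathbb{AG}$ is congruence-distributive: each of its members has a bounded distributive lattice reduct, and any variety whose members carry a lattice reduct is congruence-distributive, so Jónsson's Lemma is available. By Theorem~\ref{Main1}, the only subdirectly irreducible members of $\mathbb{AG}$ are, up to isomorphism, the four \emph{finite} (indeed simple) algebras $\mathbf 2$, $\mathbf{3_{dblst}}$, $\mathbf{3_{dmst}}$ and $\mathbf{4_{dmba}}$. Since these are finitely many finite algebras, every ultraproduct of members of this list is isomorphic to one of them, so for any subclass $\mathcal K$ of the list the subdirectly irreducibles of $\mathbb V(\mathcal K)$ lie in $\mathbb{HS}(\mathcal K)$. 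The standard consequence of Jónsson's Lemma for congruence-distributive varieties then yields a lattice isomorphism between the lattice of subvarieties of $\mathbb{AG}$ and the lattice of $\mathbb{HS}$-closed subclasses of $\{\mathbf 2, \mathbf{3_{dblst}}, \mathbf{3_{dmst}}, \mathbf{4_{dmba}}\}$ ordered by inclusion, each subvariety $\mathbb W$ corresponding to $\mathbb W \cap \{\mathbf 2, \mathbf{3_{dblst}}, \mathbf{3_{dmst}}, \mathbf{4_{dmba}}\}$ and recovered as the variety it generates.

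Next I would compute these $\mathbb{HS}$-closures, which is a finite check. Because all four algebras are simple, closure under $\mathbb H$ is vacuous (a simple algebra has no nontrivial proper homomorphic image), so only closure under subalgebras $\mathbb S$ matters. Working in the common signature $\langle \lor, \land, {}^*, {}', 0, 1\rangle$, one verifies that $\{0,1\}$ is a subuniverse of each of the four, so $\mathbf 2$ embeds into all of them, while conversely each of $\mathbf{3_{dblst}}$, $\mathbf{3_{dmst}}$, $\mathbf{4_{dmba}}$ has no proper subalgebra other than $\mathbf 2$: the subalgebra generated by the middle element of either three-element chain is the whole chain, and $\mathbf{4_{dmba}}$, being Boolean under ${}^*$, has no proper subalgebra other than $\mathbf 2$. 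Moreover no two of the three larger algebras embed into one another, since $\mathbf{3_{dblst}}$ and $\mathbf{3_{dmst}}$ are non-isomorphic three-element algebras (differing on the value of $'$ at the middle element) and $\mathbf{4_{dmba}}$ has no three-element subalgebra at all. Consequently the only constraint on an $\mathbb{HS}$-closed subclass is that it contain $\mathbf 2$ whenever it is nonempty.

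Finally I would read off the count and the lattice shape. The nonempty $\mathbb{HS}$-closed subclasses are exactly the sets $\{\mathbf 2\}\cup T$ with $T \subseteq \{\mathbf{3_{dblst}}, \mathbf{3_{dmst}}, \mathbf{4_{dmba}}\}$, of which there are $2^3 = 8$; ordered by inclusion they form a copy of the Boolean lattice $2^3$, with least element $\{\mathbf 2\}$, that is, $\mathbb V(\mathbf 2) = \mathbb{BA}$, proving part~(1). Part~(2) is identical, with Theorem~\ref{Main1} replaced by Theorem~\ref{Th1}: the subdirectly irreducibles of $\mathbb G$ are $\{\mathbf 2, \mathbf{3_{dblst}}, \mathbf{3_{klst}}\}$, the same subalgebra analysis applies, and the nonempty $\mathbb{HS}$-closed subclasses are $\{\mathbf 2\} \cup T$ with $T \subseteq \{\mathbf{3_{dblst}}, \mathbf{3_{klst}}\}$, giving $2^2 = 4$ subvarieties forming the four-element Boolean lattice with $\mathbb V(\mathbf 2)$ least. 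The main obstacle is the subalgebra bookkeeping of the middle paragraph: one must interpret every algebra in the single signature $\langle \lor, \land, {}^*, {}', 0, 1\rangle$ and be careful with the two different readings of $'$ (the dual pseudocomplement on $\mathbf{3_{dblst}}$ versus the De Morgan involution on $\mathbf{3_{dmst}}$), since it is precisely this that prevents the three larger simple algebras from collapsing into one another.
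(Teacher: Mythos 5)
Your proof is correct; note, however, that there is no in-paper argument to compare it with, since the paper does not prove Corollary \ref{subvarietiesofAG} at all but imports it from \cite{CoSa23b} (Corollary 3.33 there). Your route---congruence-distributivity from the bounded distributive lattice reduct, J\'{o}nsson's Lemma with the ultraproduct operator absorbed up to isomorphism because the subdirectly irreducibles form a finite set of finite algebras, vacuous $\mathbb{H}$-closure by simplicity, and the subalgebra bookkeeping showing that $\mathbf 2$ is the unique proper subalgebra of each of $\mathbf{3_{dblst}}$, $\mathbf{3_{klst}}$, $\mathbf{4_{dmba}}$ and that none of these three embeds in another---is the standard way to compute the subvariety lattice of a finitely generated congruence-distributive variety, and it is in substance the argument underlying the cited result, which rests on the same classification of subdirectly irreducibles (Theorem \ref{Main1} here, Theorem 3.12 of \cite{CoSa23b}). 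Two minor points are worth making explicit. First, the algebra written $\mathbf{3_{dmst}}$ in Theorem \ref{Main1} is the algebra called $\mathbf{3_{klst}}$ everywhere else in the paper (including Figure 3); you identified the two silently, which is the correct reading, but it deserves a sentence. Second, your observation that every nonempty $\mathbb{HS}$-closed subclass must contain $\mathbf 2$ is precisely what yields the clause that $\mathbb{V}(\mathbf 2)$ is the least element of the lattice of nontrivial subvarieties, so it should be flagged as delivering that part of the statement rather than left implicit in the count.
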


\setlength{\unitlength}{.7cm} 

\begin{figure} [ht]
	\begin{center}

		\unitlength 1.9mm 
		\linethickness{0.8pt}
		\ifx\plotpoint\undefined\newsavebox{\plotpoint}\fi 
		\hspace{-5cm} \vspace{-1cm} \begin{picture}(60,57.5)(0,0)   
			
			
			\put(44,43){\circle*{1.5}}   
			\put(47,43.25){$\mathbb{AG}$}
			
			\put(34.25,33.25){\circle*{1.5}} 
			\put(17,33){$\mathbb{V}(\mathbf{3_{dblst}}, \mathbf{4_{dmba}})$} %
			\put(54,33){\circle*{1.5}}      
			\put(56,33){$\mathbb{V}(\mathbf{3_{klst}}, \mathbf{4_{dmba}})$}  
			\put(44,33){\circle*{1.5}}   
			\put(46,33){$\mathbb{G}$}
			
			\put(44,23){\circle*{1.5}}    
			\put(44.5, 22){$\mathbf{V}(\mathbf{4}_{dmba})$} 
			\put(54,23){\circle*{1.5}}  
			\put(56,22){$\mathbb{V}(\mathbf{3_{klst}})$}
			\put(34,23){\circle*{1.5}}   
			\put(23,22){$\mathbb{V}(\mathbf{3_{dblst}})$}

			\put(44,13){\circle*{1.5}}   
			\put(46,10){$\mathbb V(\mathbf 2)$} 
			
			\put(54,23){\line(-1,-1){9.75}}    
			\put(54,33){\line(-1,-1){9.75}}
			\put(44,33){\line(-1,-1){9.75}}
			\put(44,43){\line(-1,-1){9.75}}
			\put(34,23){\line(1,-1){9.75}}
			\put(34,33){\line(1,-1){9.75}}
			\put(44,33){\line(1,-1){9.75}}
			\put(44,43){\line(1,-1){9.75}}
			\put(44,23){\line(0,-1){9.75}}
			\put(34,33){\line(0,-1){9.75}}
			\put(44,33){\line(0,-1){9.75}}
			\put(54,33){\line(0,-1){9.75}}
			\put(44,43){\line(0,-1){9.75}}
			
			\put(44,5){\makebox(0,0)[cc]{Figure $3$}}  
			
                \end{picture}
\end{center}
\label{fig 4}
\end{figure}

\vspace{.5cm}
The following theorem was proved in \cite[Corollary 5.3]{CoSa23b}.
\begin {theorem} \label{TDisc}
The variety $\mathbb{AG}$ is a discriminator variety.
\end{theorem}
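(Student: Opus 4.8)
The plan is to use the standard characterization: $\mathbb{AG}$ is a discriminator variety if and only if there is a single ternary term $t(x,y,z)$ realizing the ternary discriminator on every subdirectly irreducible member, and, by the general theory of discriminator varieties (see \cite{BuSa81}), it suffices to produce such a $t$ on a generating class. By Theorem \ref{Main1}, the subdirectly irreducible members of $\mathbb{AG}$ are, up to isomorphism, the four finite algebras $\mathbf 2$, $\mathbf{3_{dblst}}$, $\mathbf{3_{klst}}$ and $\mathbf{4_{dmba}}$, each of which is simple, so $\mathbb{AG} = \mathbb{V}(\mathbf 2, \mathbf{3_{dblst}}, \mathbf{3_{klst}}, \mathbf{4_{dmba}})$. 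Thus I would reduce the whole statement to the following: exhibit one ternary term that acts as the discriminator on all four of these algebras at once. I would first note that, because every member of $\mathbb{AG}$ has a bounded distributive lattice reduct, the median term $m(x,y,z) := (x \land y) \lor (y \land z) \lor (z \land x)$ is a majority term for $\mathbb{AG}$; in particular $\mathbb{AG}$ is congruence distributive, which makes the Baker--Pixley machinery available.

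To manufacture the common term, I would argue relationally. Put $\mathbf C := \mathbf 2 \times \mathbf{3_{dblst}} \times \mathbf{3_{klst}} \times \mathbf{4_{dmba}}$ and let $d$ be the operation on $\mathbf C$ applying the ternary discriminator coordinatewise. A common discriminator term exists exactly when $d$ is a term operation of $\mathbf C$: if $d = t^{\mathbf C}$, then projecting onto each factor shows that $t$ realizes the discriminator on each of the four algebras. Since $m$ is a majority term, the Baker--Pixley theorem reduces ``$d \in \mathrm{Clo}(\mathbf C)$'' to checking that the coordinatewise discriminator preserves every binary subuniverse $R \leq \mathbf A_i \times \mathbf A_j$, where $\mathbf A_i, \mathbf A_j$ run over the four generators and their subalgebras.

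The core computation I would then carry out is a finite analysis of these binary subuniverses. First I would check that each of the four algebras is hereditarily simple: the only subalgebras are a copy of $\mathbf 2$ and the whole algebra, all simple. Hence every $R \leq \mathbf A_i \times \mathbf A_j$ is a subdirect product of its (simple) projections and so is either a full product of subalgebras or the graph of an isomorphism between subalgebras. The coordinatewise discriminator clearly preserves full products (its value stays inside the chosen subalgebra), and it preserves the graph of any isomorphism $\varphi$ because the discriminator is defined purely through equality, with which $\varphi$ commutes. This would show $d$ preserves every binary subuniverse, hence $d \in \mathrm{Clo}(\mathbf C)$, yielding the desired common term $t$ and proving that $\mathbb{AG}$ is a discriminator variety.

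I expect the main obstacle to be precisely this certification: one must be sure that there are no unexpected isomorphisms among the subalgebras of the four structurally dissimilar generators that a single term would be forced to violate --- reconciling the Boolean $\mathbf{4_{dmba}}$ (where $^*$ is a complement and $'$ fixes the two atoms) with the two three-element chains (where $^*$ and $'$ act as (dual) pseudocomplements or Kleene negation). It is also worth flagging why a naive explicit construction is awkward: on $\mathbf{4_{dmba}}$ no unary term maps the atoms into $\{0,1\}$, so there is no unary-built ``equality detector'' and one cannot assemble the discriminator from coordinatewise unary indicators via the usual switching formula; this is why I would favour the relational Baker--Pixley argument above, the alternative being to guess a genuinely binary discriminator term and verify it by direct computation on the four algebras.
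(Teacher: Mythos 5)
The paper itself contains no proof of this theorem: it is imported verbatim from \cite[Corollary 5.3]{CoSa23b}. So your argument is necessarily an independent route, and its skeleton is reasonable: by Theorem \ref{Main1} the subdirectly irreducible members of $\mathbb{AG}$ are $\mathbf 2$, $\mathbf{3_{dblst}}$, $\mathbf{3_{klst}}$, $\mathbf{4_{dmba}}$; they generate $\mathbb{AG}$; and a single term acting as the ternary discriminator on all four would make $\mathbb{AG}$ a discriminator variety by definition. Your verification of hereditary simplicity of the four generators is correct, as are the checks that the coordinatewise discriminator preserves full products of subalgebras and graphs of isomorphisms.

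The genuine gap is the inference ``every $R \leq \mathbf A_i \times \mathbf A_j$ is a subdirect product of its (simple) projections \emph{and so} is either a full product of subalgebras or the graph of an isomorphism.'' That dichotomy is Fleischer's lemma, and it requires congruence \emph{permutability}, which you do not have at this point: permutability of $\mathbb{AG}$ is essentially a consequence of the discriminator property you are trying to prove, so assuming it is circular. A majority term (congruence distributivity) does not suffice: in the variety of bounded lattices, which has the median as a majority term and in which $\mathbf 2$ is simple, the relation $\{(0,0),(0,1),(1,1)\} \leq \mathbf 2 \times \mathbf 2$ is a subdirect product of simple algebras that is neither the full square nor the graph of an isomorphism. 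Hence ``simple projections'' alone never yields your dichotomy; the finite enumeration of binary subuniverses that you announced must actually be carried out, and it is precisely the unary operations that make it come out right (for instance, the order-type relation above is not a subuniverse in the signature of $\mathbb{AG}$ because $(0,1)^* = (1,0)$, and every subuniverse contains $(0,0)$ and $(1,1)$ since $0,1$ are constants). Alternatively you could first exhibit a common Mal'cev term for the four algebras, but that is roughly as hard as exhibiting the discriminator term itself. A secondary, repairable imprecision: Baker--Pixley applied to the single algebra $\mathbf C$ demands preservation of all subuniverses of $\mathbf C \times \mathbf C$, which are $8$-ary relations among the factors, not binary relations between pairs of factors; what your reduction actually uses is the several-algebras (simultaneous interpolation) form of the Baker--Pixley theorem, which does give exactly the pairwise criterion $R \leq \mathbf A_i \times \mathbf A_j$, and it should be cited in that form.
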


\begin{definition} $\mathbf A \in \mathbb{V}$ has {\bf the congruence extension property} {\rm(CEP)}, if, for every subalgebra $\mathbf B$ of $\mathbf A$, {\rm Con(}$\mathbf B)= \{\theta|_B : \theta \in {\rm Con(}\mathbf A)\}.$  A variety V has {\rm CEP} if all its members have CEP. 
\end{definition}

The following corollary is immediate from Theorem \ref{TDisc} and well-known results from universal algebra (see \cite[Chapter 4]{BuSa81} or \cite{We78}).

\begin{Corollary} \label{CorCEP}
\begin{thlist}
\item[{\rm1}] The algebra $\mathbf 2$ is primal, 

\item[{\rm2}]  $\mathbf{3_{dblst}}$, $\mathbf{3_{dmst}}$, $\mathbf{4_{dmba}}$ are quasiprimal,

\item[3]  All subvarieties of $\mathbb{AG}$ have {\rm CEP}. 
\end{thlist}
\end{Corollary}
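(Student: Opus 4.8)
The plan is to obtain all three parts as consequences of Theorem~\ref{TDisc}, which asserts that $\mathbb{AG}$ is a discriminator variety, together with the description of the subdirectly irreducible algebras in Theorem~\ref{Main1} and standard facts from universal algebra found in \cite[Chapter~4]{BuSa81} or \cite{We78}. First I would recall that a variety is a discriminator variety precisely when a single ternary term $t(x,y,z)$ realizes the ternary discriminator on every member of a generating class, which we may take to be the nontrivial subdirectly irreducible algebras. By Theorem~\ref{Main1} these are exactly $\mathbf 2$, $\mathbf{3_{dblst}}$, $\mathbf{3_{dmst}}$ and $\mathbf{4_{dmba}}$, so the term $t$ furnished by Theorem~\ref{TDisc} is the discriminator on each of them. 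Since a finite algebra is quasiprimal exactly when the discriminator is one of its term operations, this immediately yields part~(2), and it also shows that $\mathbf 2$ is quasiprimal.

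For part~(1) I would upgrade quasiprimality of $\mathbf 2$ to primality using the Foster--Pixley characterization: a finite quasiprimal algebra is primal if and only if it has no proper subalgebras and no internal isomorphisms other than the identity (in particular no nontrivial automorphisms). In the language $\langle \lor, \land, {}^{*}, {}', 0, 1\rangle$, every subalgebra of $\mathbf 2$ must contain the nullary operations $0$ and $1$, so the only subalgebra is $\mathbf 2$ itself; and any automorphism fixes $0$ and $1$ and is therefore the identity on the two-element universe. Hence $\mathbf 2$ is primal.

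For part~(3) the key ingredient is the well-known theorem that every discriminator variety has the congruence extension property (see \cite[Chapter~4]{BuSa81} or \cite{We78}), which applied to $\mathbb{AG}$ gives CEP for $\mathbb{AG}$ itself. To pass to an arbitrary subvariety $\mathbb W \subseteq \mathbb{AG}$ I would note that CEP is a condition on individual algebras together with their subalgebras and congruences, none of which are altered by the ambient variety: every $\mathbf A \in \mathbb W$ lies in $\mathbb{AG}$ and hence has CEP, so $\mathbb W$ has CEP.

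None of these steps is a genuine obstacle, which is why the corollary is billed as immediate; the only point deserving attention is that the \emph{same} term $t$ must witness the discriminator on all four simple algebras simultaneously, and this is exactly what Theorem~\ref{TDisc} provides. I would therefore present the whole argument as three short applications of the cited results rather than as any computation, taking care only to invoke the Foster--Pixley criterion for part~(1) and the CEP theorem for discriminator varieties for part~(3).
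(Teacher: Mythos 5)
Your proposal is correct and takes essentially the same route the paper intends: the paper gives no written proof at all, stating only that the corollary is immediate from Theorem \ref{TDisc} and well-known results from universal algebra (\cite[Chapter 4]{BuSa81} or \cite{We78}). Your write-up simply makes those invoked facts explicit---the discriminator term acting on the subdirectly irreducibles of Theorem \ref{Main1} gives quasiprimality, the Foster--Pixley criterion (no proper subalgebras, no nontrivial automorphisms) upgrades $\mathbf 2$ to primal, and CEP for discriminator varieties passes to subvarieties because CEP is a property of individual algebras---which is exactly the standard argument the authors are appealing to.
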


\medskip

{\bf Equational Bases for subvarieties of $\mathbb{AG}$ }

\medskip
Equational bases, given below, for the subvarieties of the variety $\mathbb{AG}$ were obtained in \cite{CoSa23b}.  We recall these equational bases below as they will be needed in what follows.  In this subsection, let us abbreviate ``defined, modulo $\mathbb{AG}$, by'' to ``defined by.''

\begin{Theorem} \cite{CoSa23b} \label{theo20230711}
\begin{thlist}
\item
$\{x^* \approx x'\}$ is a base, modulo $\mathbb{AG}$, for
the variety $\mathbb{V}(\mathbf{2}$).

\item
The variety $\mathbb{V}(\mathbf{3_{dblst}})$ is defined by 

the identity: $x \lor x' \approx 1$. 

\item
 $\mathbb{V}(\mathbf{3_{klst}})$ is defined by the identities:
$x{^*}' \approx x^{**}$, and $x'' \approx x$.

\item
  $\mathbb{V}(\mathbf{4_{dmba}})$ is defined by 

 the identity: $x \lor x^*\approx 1$.

\item
 $\mathbb{G}= \mathbb{V}(\{\mathbf{3_{dblst}}, \mathbf{3_{klst}}\})$ is defined by the identity:  $x{^*}' \approx x^{**}$.  

\item   \label{C11}
 $\mathbb{V}(\{\mathbf{3_{dblst}}, \mathbf{4_{dmba}}\})$ is defined by the identity: 

{\rm(J)} $x' \lor y^* \lor z \approx (x' \lor y)^* \lor (x' \lor z)$.  

\item
 $\mathbb{V}(\{\mathbf{3_{klst}}, \mathbf{4_{dmba}}\})$ is defined by 
 the identity: $x'' \approx x$. 
\end{thlist}
\end{Theorem}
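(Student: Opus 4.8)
The plan is to leverage the structure theory already assembled rather than to manipulate identities directly. By Theorem~\ref{Main1} the only subdirectly irreducible (equivalently, simple) members of $\mathbb{AG}$ are $\mathbf 2$, $\mathbf{3_{dblst}}$, $\mathbf{3_{klst}}$ and $\mathbf{4_{dmba}}$, and by Theorem~\ref{TDisc} the variety $\mathbb{AG}$ is a discriminator variety, hence congruence-distributive. These two facts turn the whole theorem into a finite bookkeeping problem: for each displayed identity (or finite set of identities) $\Sigma$, I need only determine which of these four algebras satisfy $\Sigma$.

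I would first isolate the reduction as a lemma. Let $\Sigma$ be one of the displayed sets and put $\mathbb{W}:=\mathbb{AG}\cap \mathrm{Mod}(\Sigma)$, the subvariety of $\mathbb{AG}$ it defines modulo $\mathbb{AG}$. By Birkhoff's subdirect representation theorem every member of $\mathbb{W}$ is a subdirect product of subdirectly irreducible quotients; each such quotient again lies in $\mathbb{W}$ and is subdirectly irreducible in $\mathbb{AG}$, so by Theorem~\ref{Main1} it is one of the four algebras above. Hence $\mathbb{W}=\mathbb{V}(\mathbb{W}_{\mathrm{si}})$, where $\mathbb{W}_{\mathrm{si}}$ is the set of those four simple algebras that satisfy $\Sigma$. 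I would also record that the two-element algebra $\mathbf 2$ embeds, via its $\{0,1\}$-subalgebra, into each of $\mathbf{3_{dblst}}$, $\mathbf{3_{klst}}$, $\mathbf{4_{dmba}}$, so $\mathbf 2$ may be deleted from any generating set already containing one of the other three. With this in hand, proving a given part amounts to two finite checks: that the intended generators satisfy $\Sigma$ (one inclusion), and that every excluded simple algebra refutes some member of $\Sigma$ (the other).

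Second, I would carry out these evaluations on the tables $0'{=}1,\ a'{=}1,\ 1'{=}0$ of $\mathbf{3_{dblst}}$ (dual pseudocomplement), $0'{=}1,\ a'{=}a,\ 1'{=}0$ of $\mathbf{3_{klst}}$ and the De~Morgan $'$ of $\mathbf{4_{dmba}}$, together with the respective pseudocomplements $^*$. Parts (i)--(v) and (vii) are settled by single-variable witnesses: $a$ refutes $x^*\approx x'$ in all three nontrivial algebras (part (i)); $a$ refutes $x\lor x'\approx 1$ in $\mathbf{3_{klst}}$ and $\mathbf{4_{dmba}}$ (part (ii)) and $x\lor x^*\approx 1$ in $\mathbf{3_{dblst}}$ and $\mathbf{3_{klst}}$ (part (iv)); $b$ refutes star-regularity $x{^*}'\approx x^{**}$ in $\mathbf{4_{dmba}}$ since $b{^*}'=a'=a\neq b=a^*=b^{**}$ (part (v)); and $a$ refutes $x''\approx x$ in $\mathbf{3_{dblst}}$ (part (vii)). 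Part (iii) is then the conjunction: the simple algebras satisfying both star-regularity and $x''\approx x$ are exactly $\mathbf 2$ and $\mathbf{3_{klst}}$, so the defined variety is $\mathbb{V}(\mathbf{3_{klst}})$.

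The \textbf{main obstacle} is part (vi), the identity (J): $x'\lor y^*\lor z\approx(x'\lor y)^*\lor(x'\lor z)$, which is genuinely three-variable, and the structural reason behind it is worth isolating. On $\mathbf{4_{dmba}}$ the operation $^*$ is the Boolean complement, so writing $u:=x'$ and using $(u\lor y)^*=u^*\land y^*$ one gets $(u\lor y)^*\lor(u\lor z)=(u^*\land y^*)\lor u\lor z=((u\lor u^*)\land(u\lor y^*))\lor z=u\lor y^*\lor z$; thus (J) is a Boolean identity and holds. On $\mathbf{3_{dblst}}$ the image of $'$ is $\{0,1\}$, so $x'$ takes only the values $0$ and $1$, for which (J) is trivially valid. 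On $\mathbf{3_{klst}}$, however, $a'=a$ allows $u$ to take the value $a$ with $a^*=0$, and the triple $x=a,\ y=0,\ z=0$ gives left-hand side $a'\lor 0^*\lor 0=a\lor 1\lor 0=1$ and right-hand side $(a'\lor 0)^*\lor(a'\lor 0)=a^*\lor a=0\lor a=a$, so (J) fails. Consequently $\mathbb{W}_{\mathrm{si}}=\{\mathbf 2,\mathbf{3_{dblst}},\mathbf{4_{dmba}}\}$ and, by the reduction lemma, the variety defined by (J) is $\mathbb{V}(\mathbf{3_{dblst}},\mathbf{4_{dmba}})$, as claimed.
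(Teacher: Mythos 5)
Your proposal is correct, but there is nothing in this paper to compare it against: Theorem \ref{theo20230711} is imported from \cite{CoSa23b} and stated here without proof, so your argument stands as a self-contained reconstruction rather than a variant of an in-paper argument. The reconstruction is sound. The reduction lemma is exactly the right tool: since subdirect irreducibility is a property of an algebra itself (not relative to the ambient variety), Birkhoff's subdirect representation together with Theorem \ref{Main1} gives $\mathbb{AG}\cap\mathrm{Mod}(\Sigma)=\mathbb{V}(\mathbb{W}_{\mathrm{si}})$, where $\mathbb{W}_{\mathrm{si}}$ is the set of those four simple algebras satisfying $\Sigma$, and the reverse inclusion holds because the generators satisfy $\Sigma$. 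One remark: congruence distributivity (your appeal to Theorem \ref{TDisc}) is never actually used --- plain Birkhoff suffices, since you never need J\'{o}nsson's lemma. Your finite evaluations all check out, including the only delicate one, part (f) (your (vi)): the identity (J) follows from $^*$ being a Boolean complement, hence holds in $\mathbf 2$ and $\mathbf{4_{dmba}}$; it holds in $\mathbf{3_{dblst}}$ because $x'$ there takes only the values $0$ and $1$; and it fails in $\mathbf{3_{klst}}$ at $x=a$, $y=z=0$, where the left side is $1$ and the right side is $a$. Likewise $b{^*}'=a\neq b=b^{**}$ correctly refutes star-regularity in $\mathbf{4_{dmba}}$, and $a''=0\neq a$ refutes involution in $\mathbf{3_{dblst}}$. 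Two points to tighten in a final write-up: first, state explicitly (even though the checks are trivial) that the claimed generators do satisfy the respective identities, since that is what yields the inclusion of $\mathbb{V}(\text{generators})$ in $\mathbb{AG}\cap\mathrm{Mod}(\Sigma)$; second, in part (e) (your (v)) your method identifies the variety defined by $x{^*}'\approx x^{**}$ as $\mathbb{V}(\mathbf{3_{dblst}},\mathbf{3_{klst}})$, but the statement also names this variety $\mathbb{G}$, and that identification requires Theorem \ref{Th1} (or the observation that, within $\mathbb{AG}$, star-regularity restores the Gautama axioms), which you should cite rather than leave implicit.
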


\vspace{.5cm}
\section{Amalgamation Property (AP) in the subvarieties of $\mathbb{AG}$} \label{secnum}

In this section we investigate the amalgamation property (AP, for short) in the subvarieties of $\mathbb{AG}$.

The interest in AP goes back to Schreier \cite{Sc27} and Neuman \cite{Ne48, Ne49}.  In fact, AP appeared implicitly even earlier in the Galois theory of field extensions, since it is AP that lets us to consider all the extensions of a given base field as subfields of one large extension. 
 AP appears in a universal algebraic setting in Fra\"iss\'{e} \cite{Fr54}.   For early applications, see \cite{Jo56}.
For the importance of the Amalgamation Property see J\'{o}nsson \cite{Jo65} and Gr\"{a}tzer \cite{Gr71}). 
For a more comprehensive history the reader is referred to \cite{CzPi96} (see also \cite{AlBu88, Ba72a, Ba72b, Be83, Co69, CzPi96, Fr54, GiLeTs15, GrLa71, GrJoLa73, Ja95, Je21, Jo56, Jo65, Jo84, Jo90, KiMaPrTh83, MeMoTs14, Ne48, Ne49,   Pi72a, Pi72b, Pi71, PoTs82, Ya74}).

By a {\bf diagram} in a class $\mathbb{K}$ of algebras we mean a quintuple $\langle \mathbf A, f, \mathbf B, g, \mathbf C\rangle$, where $\mathbf A$, $\mathbf B$, $\mathbf C \in \mathbb K$ and $f : \mathbf A \mapsto \mathbf B$ and  $g : \mathbf A \mapsto \mathbf C$ are embeddings. By an {\bf amalgam} of this diagram in $\mathbb K$ we mean a triple $\langle f_1, g_1, \mathbf D\rangle$ with $\mathbf D \in \mathbb K$ and with embeddings  $f_1 : \mathbf B \mapsto \mathbf D$ and $g_1 : \mathbf C \mapsto \mathbf D$ such that $f_1f= g_1g$. If such an amalgam exists for the diagram $\langle \mathbf A, f, \mathbf B, g, \mathbf C\rangle$, then we say that the diagram is {\bf amalgamable} in $\mathbb K$.  We say that $\mathbb{K}$ has the {\bf Amalgamation Property} (AP) if every diagram in $\mathbb{K}$ is amalgamable. 
Since f and g in a diagram are embeddings, we can, without loss of generality, simply think of a diagram $\langle \mathbf A, f, \mathbf B, g, \mathbf C\rangle$ as a triple 
$\langle \mathbf A, \mathbf B, \mathbf C\rangle$ such that $\mathbf A \leq  \mathbf B \cap \mathbf C$.  Accordingly, we adopt 
this simplified definition of AP 
in the sequel.

We now examine the Amalgamation Property for non-trivial subvarieties of the variety $\mathbb{AG}$.  
For this purpose we need the following theorem from \cite{GrLa71}.  To state the theorem, we need one more definition.

 An algebra $\mathbf{A}$ in a variety $\mathbb{V}$ is {\bf hereditarily subdirectly irreducible} if every
subalgebra of $\mathbf{A}$ is subdirectly irreducible.

\begin{Theorem} {\rm (}\cite{GrLa71}\rm{)} \label{TGr}        
Let $\mathbb{V}$ be a variety of algebras such that

(1) $\mathbb{V}$ has the Congruence Extension Property (CEP), and 

(2) every subdirectly irreducible algebra in $\mathbb{V}$ is hereditarily subdirectly irreducible. \\
Then $\mathbb{V}$ satisfies the Amalgamation Property if and only if whenever $\mathbf{A, B, C}$ are subdirectly irreducible algebras in $\mathbb{V}$ such that $\mathbf{A} \leq \mathbf{B}  \cap \mathbf{C}$, the diagram $\langle \mathbf{A, B, C}\rangle$ is amalgamable in $\mathbb{V}$.
\end{Theorem}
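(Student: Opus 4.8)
The plan is to establish the nontrivial implication $(\Leftarrow)$; the implication $(\Rightarrow)$ is immediate, since if $\mathbb V$ has the AP then in particular every diagram whose three algebras happen to be subdirectly irreducible (s.i., for short) is amalgamable. So I assume that every diagram of s.i.\ algebras of $\mathbb V$ is amalgamable, and I take an arbitrary diagram $\langle \mathbf A, \mathbf B, \mathbf C\rangle$ with $\mathbf A \le \mathbf B \cap \mathbf C$. The strategy is to decompose $\mathbf B$ and $\mathbf C$ subdirectly into s.i.\ quotients, to amalgamate those quotients pairwise over a common s.i.\ quotient of $\mathbf A$ using the hypothesis, and then to reassemble the pieces into one algebra $\mathbf D$ as a direct product, embedding $\mathbf B$ and $\mathbf C$ diagonally. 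Throughout I will use the standard Birkhoff correspondence: for $\psi \in \mathrm{Con}(\mathbf X)$, the quotient $\mathbf X/\psi$ is s.i.\ iff $\psi$ is completely meet-irreducible in $\mathrm{Con}(\mathbf X)$, and the intersection of all completely meet-irreducible congruences is the identity congruence $\Delta_{\mathbf X}$.

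The crux, and the step I expect to be the main obstacle, is the following matching lemma, in whose proof CEP does the essential work: \emph{if $\mathbf A \le \mathbf X$ in $\mathbb V$ and $\psi \in \mathrm{Con}(\mathbf A)$ is such that $\mathbf A/\psi$ is s.i., then there is $\theta \in \mathrm{Con}(\mathbf X)$ with $\mathbf X/\theta$ s.i.\ and $\theta|_A = \psi$.} To prove it I would first use CEP to produce some $\theta_0 \in \mathrm{Con}(\mathbf X)$ with $\theta_0|_A = \psi$, so that the family $\mathcal F = \{\theta \in \mathrm{Con}(\mathbf X) : \theta|_A = \psi\}$ is nonempty; since restriction to $A$ commutes with directed unions, $\mathcal F$ is closed under unions of chains, and Zorn's Lemma yields a maximal $\theta^\ast \in \mathcal F$. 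I then claim $\theta^\ast$ is completely meet-irreducible. For if $\theta^\ast = \bigcap_k \gamma_k$ with every $\gamma_k$ strictly above $\theta^\ast$, then maximality forces each $\gamma_k \notin \mathcal F$, so each $\gamma_k|_A$ is strictly above $\psi$; but restriction preserves arbitrary meets, so $\psi = \theta^\ast|_A = \bigcap_k (\gamma_k|_A)$ would exhibit $\psi$ as a meet of congruences all strictly above it, contradicting the complete meet-irreducibility of $\psi$. Hence $\mathbf X/\theta^\ast$ is s.i., proving the lemma. (This argument uses CEP but not yet the hereditary hypothesis.)

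With the lemma available I would carry out the assembly. Set $\mathcal I = \{\theta \in \mathrm{Con}(\mathbf B) : \mathbf B/\theta \text{ is s.i.}\}$ and $\mathcal J = \{\eta \in \mathrm{Con}(\mathbf C) : \mathbf C/\eta \text{ is s.i.}\}$, so that $\bigcap \mathcal I = \Delta_{\mathbf B}$ and $\bigcap \mathcal J = \Delta_{\mathbf C}$, and put $K = \{(\theta,\eta) \in \mathcal I \times \mathcal J : \theta|_A = \eta|_A\}$. For $(\theta,\eta) \in K$ write $\psi = \theta|_A = \eta|_A$; then $\mathbf A/\psi$ embeds in both $\mathbf B/\theta$ and $\mathbf C/\eta$, and since these two are s.i., the hereditary hypothesis guarantees that $\mathbf A/\psi$ is itself s.i. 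Thus $\langle \mathbf A/\psi, \mathbf B/\theta, \mathbf C/\eta\rangle$ is a diagram of s.i.\ algebras, so by hypothesis it has an amalgam $\mathbf D_{(\theta,\eta)} \in \mathbb V$ with embeddings $f_{(\theta,\eta)} : \mathbf B/\theta \mapsto \mathbf D_{(\theta,\eta)}$ and $g_{(\theta,\eta)} : \mathbf C/\eta \mapsto \mathbf D_{(\theta,\eta)}$ that agree on $\mathbf A/\psi$. I would then take $\mathbf D = \prod_{(\theta,\eta) \in K} \mathbf D_{(\theta,\eta)} \in \mathbb V$ and define $F : \mathbf B \to \mathbf D$ and $G : \mathbf C \to \mathbf D$ componentwise by $F(b) = (f_{(\theta,\eta)}(b/\theta))_{(\theta,\eta)}$ and $G(c) = (g_{(\theta,\eta)}(c/\eta))_{(\theta,\eta)}$.

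It remains to check that $\langle F, G, \mathbf D\rangle$ is the desired amalgam, which I expect to be routine. That $F$ and $G$ agree on $\mathbf A$ follows at once from $\theta|_A = \eta|_A$ together with the agreement of $f_{(\theta,\eta)}$ and $g_{(\theta,\eta)}$ on $\mathbf A/\psi$. For injectivity of $F$, given $b \neq b'$ in $\mathbf B$ I pick $\theta \in \mathcal I$ with $b/\theta \neq b'/\theta$ (possible since $\bigcap \mathcal I = \Delta_{\mathbf B}$); the congruence $\psi = \theta|_A$ is completely meet-irreducible because $\mathbf A/\psi$ embeds in the s.i.\ algebra $\mathbf B/\theta$ and is therefore s.i.\ by the hereditary hypothesis, so the matching lemma applied to $\mathbf A \le \mathbf C$ furnishes $\eta \in \mathcal J$ with $\eta|_A = \theta|_A$, whence $(\theta,\eta) \in K$ and the $(\theta,\eta)$-components of $F(b)$ and $F(b')$ differ since $f_{(\theta,\eta)}$ is an embedding; thus $F$ is injective, and $G$ is injective by the symmetric argument. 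This is where the two hypotheses jointly earn their keep: the hereditary hypothesis keeps every pairwise base $\mathbf A/\psi$ subdirectly irreducible so that the amalgamation hypothesis applies, while the matching lemma (powered by CEP) forces the two independently chosen subdirect decompositions of $\mathbf B$ and $\mathbf C$ to share enough common s.i.\ quotients of $\mathbf A$ to separate all points. The only diagram not covered by this argument is one with a one-element $\mathbf A$; in the signatures relevant to $\mathbb{AG}$, having the two distinct constants $0$ and $1$ makes that case vacuous, and in general it is dispatched directly.
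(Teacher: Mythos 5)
The paper itself gives no proof of this theorem---it is imported from Gr\"atzer and Lakser \cite{GrLa71}---so there is no internal proof to compare your argument against; the comparison can only be with the classical argument for the cited result, and yours is essentially that argument. Your proof is correct: the matching lemma (CEP plus Zorn's lemma to extend a completely meet-irreducible congruence of the subalgebra to a completely meet-irreducible congruence of the overalgebra, with restriction commuting with arbitrary meets) is exactly the step that powers the standard proof, and the assembly---amalgamating the subdirectly irreducible quotients pairwise over the induced quotients of $\mathbf{A}$, which hypothesis (2) keeps subdirectly irreducible, and then embedding $\mathbf{B}$ and $\mathbf{C}$ into the product of these amalgams, with the matching lemma supplying enough coordinates to separate points---is the standard subdirect-product reassembly. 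Your closing remark about one-element $\mathbf{A}$ is also sound: since hypothesis (2) forbids a subdirectly irreducible algebra from having a one-element subalgebra, a trivial $\mathbf{A}$ forces $\mathbf{B}$ and $\mathbf{C}$ to be trivial, and in the paper's application to subvarieties of $\mathbb{AG}$ the case is vacuous because $0 \neq 1$ in every nontrivial algebra.
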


Let $\mathbb{DMBA}$ denote the variety $\mathbb{V}(\mathbf{4_{dmba}})$, whose members will be called ``De Morgan Boolean algebras.''  Observe that it follows from Theorem \ref{Main1} that the varieties $\mathbb{B}$,  
$\mathbb{RDBLS}\rm t$, $\mathbb{RKLS}\rm t$ and $\mathbb{DMBA}$ are subvarieties of $\mathbb{AG}$.  Note also that, since the variety 
$\mathbb{AG}$ is a discriminator variety by \ref{TDisc}, it follows that every subvariety of  $\mathbb{AG}$ has CEP and every algebra in it is hereditarily subdirectly irreducible.  Hence the above theorem is applicable to all the subvarieties of $\mathbb{AG}$.\\

\subsection{The varieties $\mathbb{B}$,  $\mathbb{RDBLS}\rm t$, $\mathbb{RKLS}\rm t$ and $\mathbb{DMBA}$}

\begin{Theorem} \label{Gamal}  
Let $\mathbb{V}$ be a nontrivial subvariety of $\mathbb{AG}.$  Then
\begin{center}
 $\mathbb{V}$ has the Amalgamation Property if $\mathbb{V} \in \{\mathbb{BA}, \mathbb{RDBLS}\rm t, \mathbb{RKLS}\rm t,
\mathbb{DMBA}\}$.
\end{center}
\end{Theorem}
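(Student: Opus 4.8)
The plan is to apply Theorem \ref{TGr} to reduce the AP to amalgamating diagrams of subdirectly irreducible algebras, and then to finish by a short finite case analysis based on the classification of subdirectly irreducibles in Theorem \ref{Main1}. As noted after Theorem \ref{TGr}, since $\mathbb{AG}$ is a discriminator variety (Theorem \ref{TDisc}), each subvariety $\mathbb V$ of $\mathbb{AG}$ has the CEP and each of its subdirectly irreducible members is hereditarily subdirectly irreducible; hence both hypotheses of Theorem \ref{TGr} hold for every $\mathbb V$ in the list, and it suffices to amalgamate, inside $\mathbb V$, every diagram $\langle \mathbf A, \mathbf B, \mathbf C\rangle$ with $\mathbf A \le \mathbf B \cap \mathbf C$ in which $\mathbf A, \mathbf B, \mathbf C$ are subdirectly irreducible.

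Next I would pin down the subdirectly irreducible members of each variety. By Theorem \ref{Main1}, the only subdirectly irreducible algebras in $\mathbb{AG}$ are $\mathbf 2$, $\mathbf{3_{dblst}}$, $\mathbf{3_{klst}}$ and $\mathbf{4_{dmba}}$, so the subdirectly irreducibles of a subvariety $\mathbb V$ are exactly those of these four that lie in $\mathbb V$. Reading this off the subvariety lattice of Corollary \ref{subvarietiesofAG} (Figure 3), the subdirectly irreducibles are $\{\mathbf 2\}$ for $\mathbb{BA}$, $\{\mathbf 2, \mathbf{3_{dblst}}\}$ for $\mathbb{RDBLS}\mathrm t$, $\{\mathbf 2, \mathbf{3_{klst}}\}$ for $\mathbb{RKLS}\mathrm t$, and $\{\mathbf 2, \mathbf{4_{dmba}}\}$ for $\mathbb{DMBA}$. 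Writing $\mathbf S$ for the largest of these in each case, the key observation is that in every one of the four varieties the subdirectly irreducibles form a two-element chain $\mathbf 2 \le \mathbf S$ under embeddability (the copy of $\mathbf 2$ being the subalgebra $\{0,1\}$ of $\mathbf S$): there is a unique maximal simple algebra, so two non-isomorphic nontrivial simple algebras can never occur together as $\mathbf B$ and $\mathbf C$.

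I would then dispatch the remaining diagrams directly. If $\mathbf A \cong \mathbf S$, then $\mathbf A \le \mathbf B$ and $\mathbf A \le \mathbf C$ force $\mathbf A = \mathbf B = \mathbf C = \mathbf S$ by finiteness, and $\mathbf D := \mathbf S$ with the identity embeddings amalgamates the diagram. If $\mathbf A \cong \mathbf 2$, I would take $\mathbf D$ to be a copy of $\mathbf S$ when either $\mathbf B$ or $\mathbf C$ is isomorphic to $\mathbf S$ (and $\mathbf D := \mathbf 2$ otherwise), and let $f_1, g_1$ be isomorphisms of $\mathbf B$ and $\mathbf C$ onto the corresponding subalgebras of $\mathbf D$. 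The only thing to check is that $f_1$ and $g_1$ agree on $\mathbf A$, that is $f_1|_{\mathbf A} = g_1|_{\mathbf A}$; this is automatic, since every homomorphism here preserves the constants $0$ and $1$, and $\mathbf A = \mathbf 2 = \{0,1\}$, so $f_1$ and $g_1$ necessarily coincide on $\mathbf A$. This produces the required amalgam and establishes the AP for all four varieties.

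I expect essentially all of the work to sit in the reduction and in correctly identifying the subdirectly irreducibles; once they are seen to form the chain $\mathbf 2 \le \mathbf S$, the amalgamation is immediate and needs no auxiliary construction such as a subdirect product. It is worth flagging that the same strategy visibly breaks down for the remaining four subvarieties, each of which contains two distinct maximal simple algebras: there a diagram with $\mathbf A = \mathbf 2$ and $\mathbf B$, $\mathbf C$ equal to the two different simples has no common superalgebra in the variety. Proving that failure, however, belongs to the negative half of the classification and is not part of the present theorem.
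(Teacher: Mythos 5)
Your proposal is correct and follows essentially the same route as the paper: both invoke Theorem \ref{TGr} (applicable since $\mathbb{AG}$ is a discriminator variety, giving CEP and hereditary subdirect irreducibility) and then observe that, because the subdirectly irreducibles of each of the four varieties are just $\mathbf 2$ together with a single maximal simple algebra $\mathbf S$, every diagram of subdirectly irreducibles amalgamates trivially. The only difference is that the paper declares this last step ``clear,'' whereas you carry out the finite case analysis explicitly (including the check that the embeddings agree on $\mathbf 2$ because homomorphisms fix the constants), which is a faithful filling-in of the same argument rather than a different one.
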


\begin{proof}
  It is well-known that $\mathbb{BA}$ has the Amalgamation Property.  It also trivially follows from Theorem \ref{TGr} since $\mathbf{2}$ is the only subdirectly irreducible (=simple) Boolean algebra.
 Since $\{\mathbf 2$, $\mathbf{3_{dblst}}\}$, $\{\mathbf 2$, $\mathbf{3_{klst}}\}$, $\{\mathbf 2$, $\mathbf{3_{klst}}\}$ and 
$\{\mathbf 2$, $\mathbf{4_{dmba}}\}$ are, respectively, the sets of subdirectly irreducible algebras in $\mathbb{RDBLS}\rm t$, $\mathbb{RKLS}\rm t$ and $\mathbb{DMBA}$, it is also clear from Theorem \ref{TGr} that the varieties 
$\mathbb{RDBLS}\rm t$, $\mathbb{RKLS}\rm t$ and $\mathbb{DMBA}$ have the AP.
\end{proof}

\medskip
\subsection{The variety $\mathbb{G}$}

\

\begin{Theorem} \label{Gamal1}
The variety $\mathbb{G}$ does not have the Amalgamation Property.
\end{Theorem}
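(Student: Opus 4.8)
The plan is to disprove AP by exhibiting a single diagram of subdirectly irreducible algebras that admits no amalgam; since AP requires \emph{every} diagram to be amalgamable, one counterexample suffices, so I do not even need the substantive direction of Theorem \ref{TGr}. The candidate is the diagram $\langle \mathbf 2, \mathbf{3_{dblst}}, \mathbf{3_{klst}}\rangle$, in which $\mathbf 2 = \{0,1\}$ is the common Boolean subalgebra sitting inside both three-element algebras. First I would verify that this is a legitimate diagram in $\mathbb G$: on $\{0,1\}$ both $^*$ and $'$ act as the Boolean complement in each of $\mathbf{3_{dblst}}$ and $\mathbf{3_{klst}}$, so $\mathbf 2$ embeds into both and the two embeddings agree on $\{0,1\}$, giving $\mathbf 2 \leq \mathbf{3_{dblst}} \cap \mathbf{3_{klst}}$.

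Next, suppose toward a contradiction that an amalgam $\langle f_1, g_1, \mathbf D\rangle$ exists with $\mathbf D \in \mathbb G$ and embeddings $f_1 \colon \mathbf{3_{dblst}} \to \mathbf D$ and $g_1 \colon \mathbf{3_{klst}} \to \mathbf D$. Let $a$ denote the middle element of each three-element chain and set $b := f_1(a)$ and $c := g_1(a)$. The crucial point is the asymmetric behaviour of $'$ on the two middle elements: in $\mathbf{3_{dblst}}$ the operation $'$ is the dual pseudocomplement $^+$, so $a' = 1$, whereas in $\mathbf{3_{klst}}$ the operation $'$ is the Kleene negation, so $a' = a$. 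Since $f_1$ and $g_1$ are homomorphisms, this yields in $\mathbf D$ the relations $b^* = 0$, $b' = 1$, while $c^* = 0$, $c' = c$.

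Finally, I would invoke Birkhoff's subdirect representation theorem together with Theorem \ref{Th1}: every $\mathbf D \in \mathbb G$ embeds subdirectly into a product $\prod_{i} \mathbf S_i$ with each factor $\mathbf S_i \in \{\mathbf 2, \mathbf{3_{dblst}}, \mathbf{3_{klst}}\}$, and $\mathbf D$ is nontrivial because it contains a copy of $\mathbf{3_{dblst}}$, so the index set is nonempty. Projecting $b$ and $c$ to a factor $\mathbf S_i$ and running a short case analysis is the decisive step. The constraints $\pi_i(b)^* = 0$ and $\pi_i(b)' = 1$ are solvable only in $\mathbf{3_{dblst}}$ (forcing $\pi_i(b) = a$): in $\mathbf 2$ the only dense element is $1$, and $1' = 0 \neq 1$, while in $\mathbf{3_{klst}}$ the only element sent to $1$ by $'$ is $0$, which is not dense. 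Dually, $\pi_i(c)^* = 0$ and $\pi_i(c)' = \pi_i(c)$ force $\mathbf S_i \cong \mathbf{3_{klst}}$, since $\mathbf{3_{dblst}}$ has no dense fixed point of $'$. Hence every factor would have to be simultaneously $\mathbf{3_{dblst}}$ and $\mathbf{3_{klst}}$, which is impossible; this contradiction shows the diagram is not amalgamable, whence $\mathbb G$ fails AP.

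The main obstacle is precisely this factorwise case analysis, and within it the correct reading of $'$ on $\mathbf{3_{dblst}}$: it is the dual pseudocomplement $^+$ with $a' = 1$, \emph{not} the Stone operation $^*$. The resulting asymmetry between $b' = 1$ and $c' = c$ is exactly what forbids a single factor from hosting both images, and everything else reduces to routine bookkeeping with the Stone and dual quasi-De Morgan operations.
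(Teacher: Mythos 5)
Your proof is correct, and it reaches the contradiction by a genuinely different route from the paper's. Both arguments use the same diagram $\langle \mathbf{2}, \mathbf{3_{dblst}}, \mathbf{3_{klst}}\rangle$ and extract the same data from a putative amalgam: an element $b$ with $b^*=0$, $b'=1$ and an element $c$ with $c^*=0$, $c'=c$ (and you are right that only the trivial direction is needed here, i.e.\ exhibiting one non-amalgamable diagram suffices, so Theorem \ref{TGr} plays no essential role). The paper then stays entirely inside the amalgam and uses only the regularity identity (R1): substituting the two elements for $x$ and $y$ in both orders gives, in its notation, $a\leq b$ and $b\leq a$, hence $a=b$, and applying $'$ forces $b=1$, a contradiction. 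You instead argue structurally: by Birkhoff's subdirect representation theorem together with Theorem \ref{Th1}, the amalgam embeds in a product of factors from $\{\mathbf{2}, \mathbf{3_{dblst}}, \mathbf{3_{klst}}\}$ over a nonempty index set, and your factorwise case analysis is accurate: the constraints $x^*=0$, $x'=1$ are satisfiable only in $\mathbf{3_{dblst}}$, while $x^*=0$, $x'=x$ only in $\mathbf{3_{klst}}$ (for the latter you should also record explicitly that $\mathbf{2}$ has no fixed point of $'$, though that is immediate), so no single factor can host both projections. As to what each approach buys: the paper's computation is self-contained, uses nothing beyond the defining identity (R1) of $\mathbb{G}$, and serves as the template for the later, harder non-amalgamation proofs (Theorems \ref{3dbl4amal}, \ref{3klst4amal}, \ref{AGamal}), where the analogous equational work is much more delicate; your argument invokes the heavier classification machinery but makes the obstruction conceptually transparent, and it transfers essentially verbatim to those other non-AP subvarieties of $\mathbb{AG}$ once Theorem \ref{Main1} is used in place of Theorem \ref{Th1}.
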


\begin{Proof} 
In view of Theorem \ref{TGr}, it suffices to exhibit a diagram of subdirectly irreducible algebras of $\mathbb{G}$ which is not amalgamable in $\mathbb{G}$.  From Theorem \ref{Th1} we know that the algebras $\mathbf{2, 3_{dblst}, 3_{klst}}$ are subdirectly irreducible in $\mathbb{G}$ and $\mathbf 2$ is a subalgebra of both the algebras $\mathbf{3_{dblst}}$ and $\mathbf{3_{klst}}$.\\
So, consider the diagram $\langle \mathbf{2, 3_{dblst}, 3_{klst}}\rangle$ of subdirectly irreducible algebras in $\mathbb{G}$.
We claim that this diagram is not amalgamable in $\mathbb{G}$.  Suppose our claim is false.  Then there exists an algebra $\mathbf A \in \mathbb{G}$ such that both $\mathbf{3_{dblst}}$ and $\mathbf{3_{klst}}$ are subalgebras of $\mathbf A$.  So, there are $a, b \in \mathbf A $ such that $0<a<1$, $a'=1$, $a^*=0$, $0<b<1$, $b'=b$ and $b^*=0$. 
Now, by (R1), we have
$(a \land a'{^*}') \land (b \lor b^*) = a \land a'{^*}' $, 
which, as  $a'=1$ and $b^*=0$,  
leads to
\begin{equation}\label{(E1)}
a  \leq b. 
\end{equation}  
Next, again using (R1),  we get
 $ (b  \land b'{^*}') \land (a \lor a^*) = b \land b'{^*}' $, from which, in view of $a^*=0$, b'=b, and $b^*=0$, 
we have
\begin{equation}\label{(E2)}
b  \leq a.  
\end{equation} 
 From (\ref{(E1)}) and (\ref{(E2)}) it follows that $a = b$, 
which implies                                                                                                                                                                                                             
$b = 1$, as $a'=1$ and $b'=b$, which is a contradiction since $b \neq 1$.  Thus we conclude that the diagram  $\langle \mathbf{2, 3_{dblst}, 3_{klst}}\rangle$ is not amalgamable, proving the theorem.
\end{Proof}

\begin{remark} The join of two discriminator varieties having {\rm AP} need not have {\rm AP} since $\mathbb{G}$ is the join of $\mathbb{RDBLS}\rm t$, $\mathbb{RKLS}\rm t$, each of which has {\rm AP}, while $\mathbb{G}$ does not have {\rm AP}, as shown in the above theorem  This was also, independently, noticed by S. Burris {\rm(}private communication{\rm)}.  
\end{remark}

\medskip
\subsection{The variety $\mathbb{V}(\mathbf{3_{dblst}}, \mathbf{4_{dmba}})$}

\

\medskip

Here we wish to show that $\mathbb{V}(\mathbf{3_{dblst}}, \mathbf{4_{dmba}})$ does not have the AP, the proof of which is developed through the following lemmas, where we assume $\mathbb{V} := \mathbb{V}(\mathbf{3_{dblst}}, \mathbf{4_{dmba}})$, $\mathbf{A} \in \mathbb{V}$, and $x,y, a, b \in \mathbf{A}$ such that $0<a<1$, $a^*=0$ and $a'=1$, $0<b<1$,
$b \lor b^*=1$, $b'=b$, $b{^*}'=b^*.$

\begin{Lemma} \label{LD} 
We have\\
{\rm (a)}  $x' \lor x'^*  = 1.$   \\
 {\rm(b)} $x'^{**} = x'. $  
\end{Lemma}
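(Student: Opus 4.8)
The plan is to derive both statements from the single inequality $x'^{**} \leq x'$: granting this, part (b) is immediate and part (a) follows at once from the Stone identity. It is worth noting first that neither equality holds in $\mathbf{3_{klst}}$ (taking the middle element $a$ one has $a' = a$ and $a^* = 0$, so $a'^{**} = 1 \neq a$ and $a' \lor a'^* = a \neq 1$), and $\mathbf{3_{klst}} \in \mathbb{AG}$; hence the $p$-algebra and dual quasi-De Morgan axioms cannot suffice on their own, and the defining identity (J) of $\mathbb{V} = \mathbb{V}(\mathbf{3_{dblst}}, \mathbf{4_{dmba}})$ must carry the argument.

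The one substantive step is to establish $x'^{**} \leq x'$, which I would obtain by specializing (J), namely $x' \lor y^* \lor z \approx (x' \lor y)^* \lor (x' \lor z)$, at $y := x'^*$ and $z := 0$. The left-hand side then reads $x' \lor x'^{**}$. On the right-hand side the term $(x' \lor x'^*)^*$ vanishes: the pseudocomplement De Morgan law $(p \lor q)^* \approx p^* \land q^*$ turns it into $x'^* \land x'^{**}$, which equals $0$ by the $p$-algebra identity $z^* \land z^{**} \approx 0$ (with $z := x'$). Thus the right-hand side collapses to $x' \lor 0 = x'$, and (J) gives $x' \lor x'^{**} = x'$, i.e. $x'^{**} \leq x'$.

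The two parts now follow. For (b), the $p$-algebra inequality $z \leq z^{**}$ at $z := x'$ gives $x' \leq x'^{**}$, so together with the above we conclude $x'^{**} = x'$. For (a), applying the Stone identity $z^* \lor z^{**} \approx 1$ at $z := x'$ yields $x'^* \lor x'^{**} = 1$, and substituting $x'^{**} = x'$ from (b) gives $x' \lor x'^* = 1$; alternatively one may bypass (b) and argue $1 = x'^* \lor x'^{**} \leq x'^* \lor x'$ directly from $x'^{**} \leq x'$. I do not anticipate any genuine obstacle: the whole proof turns on spotting the correct instantiation of (J) --- choosing $y := x'^*$ so that $y^*$ reproduces $x'^{**}$ while $(x' \lor y)^*$ is annihilated --- after which the conclusion is forced by the standard Stone identities.
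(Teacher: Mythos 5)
Your proof is correct, and it runs through the defining identity (J) just as the paper's does, but with a different specialization and in the reverse logical order, so it is worth comparing the two. The paper substitutes $y := 0$, $z := 0$ into (J): the left side becomes $x' \lor 0^* \lor 0 = x' \lor 1$ and the right side becomes $(x' \lor 0)^* \lor (x' \lor 0) = x'^* \lor x'$, which yields (a) immediately; it then deduces (b) from (a) by the distributive computation $x' = x' \lor (x'^* \land x'^{**}) = (x' \lor x'^*) \land (x' \lor x'^{**}) = 1 \land (x' \lor x'^{**}) = x' \lor x'^{**} = x'^{**}$, the last equality using the $p$-algebra axiom $x' \leq x'^{**}$. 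You instead substitute $y := x'^*$, $z := 0$ to extract the single nontrivial inequality $x'^{**} \leq x'$, using $(x' \lor x'^*)^* = x'^* \land x'^{**} = 0$, after which (b) follows from $x' \leq x'^{**}$ and (a) follows from the Stone identity. Both arguments are sound and of comparable length. What each buys: the paper's substitution is the simplest possible and delivers (a) without explicitly invoking the Stone identity, at the cost of a small lattice computation for (b); your version isolates $x'^{**} \leq x'$ as the one substantive fact from which both parts fall out by standard Stone/$p$-algebra axioms, and your preliminary observation that both equalities fail in $\mathbf{3_{klst}} \in \mathbb{AG}$ correctly pinpoints why (J) must carry the whole argument --- a sanity check the paper leaves implicit.
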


\begin{proof}
By the defining identity (J) 
of the variety $\mathbb{V}$ (see (f) of Theorem \ref{theo20230711}), we have
$x' \lor (0^* \lor 0)=(x' \lor 0)^* \lor x' \lor 0 $; so,
$x' \lor 1 =x'^* \lor x' $, 
whence,
 $x' \lor x'^*  = 1,$ proving (a). 
In view of (a), we obtain $x' = x' \lor (x'^* \land x'^{**})= (x' \lor x'^*) \land (x' \lor x'^{**})=1 \land (x' \lor x'^{**})= x' \lor x'^{**} = x'^{**}$.
\end{proof}

Recall that $\mathbb{AG} \models {\rm(L1)}$ and hence,$\mathbb{V}(\mathbf{3_{dblst}}, \mathbf{4_{dmba}}) \models {\rm(L1)}$.

\begin{Lemma} \label{LE}
 $x'' \land x'^* = x \land x'^*. $   
\end{Lemma}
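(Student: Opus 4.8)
The plan is to prove the equality by splitting it into two inequalities, only one of which has any content. Since the defining conditions of $\mathbb{AG}$ include the dual quasi-De Morgan law $x'' \leq x$, meeting both sides with $x'^*$ gives $x'' \land x'^* \leq x \land x'^*$ for free. So the whole task reduces to the reverse inequality, and in fact to showing $x \land x'^* \leq x''$ (since $x \land x'^* \leq x'^*$ is automatic). First I would isolate the target $x \land x'^* \leq x''$ and assemble it from two auxiliary relations.

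The reverse inequality I would obtain by combining a ``disjointness'' relation and a ``complementation'' relation. The disjointness relation is
\[
x \land x'^* \land x'{^*}' = 0 ,
\]
and this is where (L1) does the real work: applying (L1) gives $x \land x'^* = (x \land x'^*)'^*$, and rewriting the right-hand side with the dual De Morgan law $(u \land v)' = u' \lor v'$ followed by the Stone/$p$-algebra law $(u \lor v)^* = u^* \land v^*$ re-expresses it as $x'^* \land (x'{^*}')^*$. Meeting this with $x'{^*}'$ and using the pseudocomplement identity $z \land z^* = 0$ with $z := x'{^*}'$ collapses it to $0$, which is exactly the displayed disjointness. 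The complementation relation is
\[
x'' \lor x'{^*}' = 1 ,
\]
obtained by applying $'$ to $x' \land x'^* = 0$ (valid because $x'^*$ is the pseudocomplement of $x'$) and using the dual De Morgan law once more. With these two relations in hand, distributivity finishes the argument:
\[
x \land x'^* = (x \land x'^*) \land (x'' \lor x'{^*}') = (x \land x'^* \land x'') \lor 0 \leq x'' .
\]

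The step I expect to be the crux is extracting the disjointness relation $x \land x'^* \land x'{^*}' = 0$; the remaining manipulations are routine facts about a Stone algebra together with the order-reversing dual De Morgan operation. The subtlety is that (L1) should be read not as a fact about one fixed term but as a rewriting rule that recasts $x \land x'^*$ entirely in the ``primed-then-starred'' variables $x'^*$ and $(x'{^*}')^*$, after which the identity $z \land z^* = 0$ becomes applicable. I note that this argument uses only the axioms of $\mathbb{AG}$ (and not Lemma~\ref{LD}), so the identity in fact holds throughout $\mathbb{AG}$, not merely in $\mathbb{V}(\mathbf{3_{dblst}}, \mathbf{4_{dmba}})$; as a cross-check one could instead verify it directly on the four subdirectly irreducible algebras $\mathbf 2,\mathbf{3_{dblst}},\mathbf{3_{dmst}},\mathbf{4_{dmba}}$ of Theorem~\ref{Main1}.
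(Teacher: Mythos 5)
Your proof is correct, and it takes a genuinely different route from the paper's. The paper's own argument is a purely equational, symmetric computation: apply (L1) to get $x \land x'^* = (x \land x'^*)'^* = x'^* \land x'{^*}'^*$, note that this last expression depends on $x$ only through $x'$, use $x''' = x'$ (a consequence of the dually quasi-De Morgan axioms) to rewrite it as $x'''^* \land x'''{^*}'^*$, and recognize this as the same (L1)-expansion computed for $x''$ in place of $x$; a second, reverse application of (L1) then gives $x'' \land x'''^* = x'' \land x'^*$. So the paper's mechanism is the invariance of $x'$ under $x \mapsto x''$, whereas you split the identity into two inequalities: the easy one from $x'' \leq x$, and the hard one $x \land x'^* \leq x''$ obtained from the disjointness $x \land x'^* \land x'{^*}' = 0$ (where (L1) does exactly the same work as in the paper's first step) together with the complementation $x'' \lor x'{^*}' = 1$ (from applying $'$ to $x' \land x'^* = 0$), combined via distributivity. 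All of your steps are justified by the axioms you cite. Your closing remark that the argument uses only the $\mathbb{AG}$-axioms, so the identity holds throughout $\mathbb{AG}$, is correct, but it does not actually distinguish your proof from the paper's: the paper's computation likewise avoids Lemma~\ref{LD}, which is why the paper recalls, just before the lemma, that $\mathbb{AG} \models {\rm(L1)}$. What your route buys is a more transparent order-theoretic picture, isolating the single place where (L1) is needed; what the paper's route buys is brevity, a short equational chain with (L1) applied once forwards and once backwards.
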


\begin{proof}
\begin{align*}
x \land x'^* &= (x \land x'^*)'^*  &\text{by (L1)}\\ 
&= x'^* \land x'{^*}'^*\\
&= x'''^* \land x'''{^*}'^* \\
&= (x'' \land x'''^*)'^* \\  
&= x'' \land x'''^*  &\text{by (L1)}.\\
&= x'' \land x'^*,
\end{align*}
proving the lemma.
\end{proof}

\begin{Lemma} \label{LE1}
$(a \lor x) \lor [(a \lor x) \land (a \lor x)'^*]^*=1.$
\end{Lemma}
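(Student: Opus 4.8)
The plan is to first simplify the pseudocomplemented term on the left and thereby reduce the claim to the single identity $(a\lor x)\lor(a\lor x)'=1$. Writing $p:=a\lor x$, I would use the Stone-algebra De Morgan law $(u\land v)^*\approx u^*\lor v^*$ (valid because the $^*$-reduct is a Stone algebra, being in fact equivalent to the Stone identity) together with Lemma \ref{LD}(b) to compute $[p\land p'^*]^*=p^*\lor p'^{**}=p^*\lor p'$. Since $a\le p$ and $a^*=0$, the law $(u\lor v)^*\approx u^*\land v^*$ gives $p^*=a^*\land x^*=0$, so $[p\land p'^*]^*=p'$ and the whole left-hand side collapses to $p\lor p'=(a\lor x)\lor(a\lor x)'$.

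It therefore remains to prove $(a\lor x)\lor(a\lor x)'=1$, and here the key is the auxiliary $\mathbb{V}$-identity $x\lor x'\lor x^*\approx 1$, which I would establish as follows. Applying the Stone De Morgan law and Lemma \ref{LD}(b) to Lemma \ref{LE} (that is, to $x''\land x'^*\approx x\land x'^*$) yields $x''^*\lor x'=x^*\lor x'$, whence $x''^*\le x^*\lor x'$. On the other hand, Lemma \ref{LD}(a) applied to $x'$ gives $x''\lor x''^*=1$, and since $x''\le x$ (axiom (iv) of dually quasi-De Morgan algebras) this forces $x\lor x''^*=1$. Combining the two inequalities, $1=x\lor x''^*\le x\lor x'\lor x^*$, i.e. $x\lor x'\lor x^*\approx 1$. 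Specializing to $p=a\lor x$ and using $p^*=0$ from the first paragraph, this identity evaluated at $p$ gives $p\lor p'=p\lor p'\lor p^*=1$, which is exactly the reduced form of the claim.

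The main obstacle is the middle step. The statement is special to $\mathbb{V}=\mathbb{V}(\mathbf{3_{dblst}},\mathbf{4_{dmba}})$ and fails in $\mathbf{3_{klst}}\in\mathbb{AG}$, so $x\lor x'\lor x^*\approx 1$ cannot follow from the $\mathbb{AG}$ axioms alone; the essential input is Lemma \ref{LD}, which encodes the defining identity (J) of $\mathbb{V}$. The delicate point is that naively re-feeding (J) or Lemma \ref{LD}(a) into the target only reproduces $x'\lor x'^*\approx 1$ and collapses to a tautology; genuinely new content is extracted only by applying $^*$ to Lemma \ref{LE} and invoking Lemma \ref{LD}(b), which is the nonobvious move that drives the whole argument.
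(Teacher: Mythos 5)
Your proof is correct, and its engine is the same as the paper's: Lemma \ref{LD}(a) applied at $x'$ (giving $x''\lor x''^{*}=1$), the dual quasi-De Morgan axiom $x''\le x$, and Lemma \ref{LE} to exchange $x''$ for $x$ under $^{*}$, followed by substituting $a\lor x$. The packaging, however, differs in a way worth recording. The paper keeps the term $(x\land x'^{*})^{*}$ folded: it proves the identity $x\lor(x\land x'^{*})^{*}\approx 1$ outright and simply substitutes $x:=a\lor x$, so it needs neither the law $(u\land v)^{*}\approx u^{*}\lor v^{*}$, nor Lemma \ref{LD}(b), nor any hypothesis on $a$ --- as proved there, the lemma holds with $a$ replaced by an arbitrary element. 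You instead unfold $(x\land x'^{*})^{*}$ into $x^{*}\lor x'$, prove the identity $x\lor x'\lor x^{*}\approx 1$ (equivalent to the paper's, modulo that unfolding), and then must invoke $a^{*}=0$ twice to refold. The net effect is that you prove the paper's later Lemma \ref{LI}, namely $a\lor x\lor(a\lor x)'=1$, first and derive the present lemma from it --- exactly the reverse of the paper's order of deduction, since the paper obtains Lemma \ref{LI} from this lemma. There is no circularity in your reversal, and your closing diagnosis transfers to the paper's own proof: there too the only non-$\mathbb{AG}$ input is Lemma \ref{LD} (i.e., the identity (J)), and the content-extracting step is Lemma \ref{LE}, used there unstarred, which is precisely what lets the paper dispense with the Stone--De Morgan law and Lemma \ref{LD}(b) at this stage.
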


\begin{proof}
As $x'' \geq (x'^* \land x'')$, we get from Lemma \ref{LD} (a) that $x'' \lor (x'^* \land x'')^*  \geq x'' \lor x''^*  =1$; whence,
$ x \lor x'' \lor (x'^* \land x'')^* = x \lor 1 $, 
implying
\begin{equation} \label{070823_01}
	x \lor (x'^* \land x'')^* =1,
\end{equation}
as $x'' \leq x.$ 	
From 
(\ref{070823_01}) 
and Lemma \ref{LE}, we get
that  
$x \lor (x \land x'^*)^* = 1,$ from which,
replacing x by $a \lor x$, we get
\begin{equation} \label{A}
	(a \lor x) \lor [(a \lor x) \land (a \lor x)'^*]^*=1,
\end{equation}
proving the lemma.
\end{proof}

\begin{Lemma} \label{C1}
$b^* \geq (b^* \lor a)'$.
\end{Lemma}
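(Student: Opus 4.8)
The plan is to reduce the inequality to a single application of the order-reversing (antitone) behaviour of the operation $'$, combined with the standing hypothesis $b{^*}' = b^*$. Unlike the preceding lemmas, I expect this one to require neither (L1) nor the defining identity (J) of $\mathbb{V}$: only the underlying bounded-distributive-lattice order and the dual quasi-De Morgan law for $'$ should be needed.

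First I would record that in any dually quasi-De Morgan algebra the operation $'$ is antitone. This is immediate from the $\land$-De Morgan identity (ii) of the definition, namely $(x \land y)' \approx x' \lor y'$: if $u \leq v$, then $u \land v = u$, so $u' = (u \land v)' = u' \lor v'$, which forces $v' \leq u'$. Hence $u \leq v$ implies $v' \leq u'$. Next, since $b^* \leq b^* \lor a$ holds trivially in the underlying lattice, antitonicity yields $(b^* \lor a)' \leq (b^*)'$. Finally, invoking the hypothesis $b{^*}' = b^*$, that is $(b^*)' = b^*$, I conclude $(b^* \lor a)' \leq b^*$, which is exactly $b^* \geq (b^* \lor a)'$.

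The only step calling for any care is the first one, and even there the argument is the standard two-line derivation of antitonicity from the De Morgan law for meets; I do not anticipate a genuine obstacle. It is worth noting that none of the other special data on $a$ and $b$ (namely $a^* = 0$, $a' = 1$, $0 < a < 1$, $0 < b < 1$, or $b \lor b^* = 1$) enters this particular lemma: the proof uses solely the fixed-point condition $b{^*}' = b^*$ together with the order-reversal of $'$.
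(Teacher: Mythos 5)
Your proof is correct, and it is genuinely different from --- and substantially more elementary than --- the one in the paper. Your antitonicity argument is sound: from the dual quasi-De Morgan identity $(x \land y)' \approx x' \lor y'$, the implication $u \leq v \Rightarrow v' \leq u'$ follows exactly as you say, and then $b^* \leq b^* \lor a$ together with the standing hypothesis $b{^*}' = b^*$ gives $(b^* \lor a)' \leq (b^*)' = b^*$ at once. The paper instead proves $b^* = b^* \lor (b^* \lor a)'$ by a longer computation: it first derives the auxiliary identity $x \lor (x' \lor y)'^* = 1$ from Lemma \ref{LD}(a), then expands $b^*$ using $z \land z^* = 0$ and distributivity, and finally invokes Lemma \ref{LD}(b) ($x'^{**} = x'$) together with $b{^*}' = b^*$ --- and Lemma \ref{LD} itself rests on the defining identity (J) of $\mathbb{V}(\mathbf{3_{dblst}}, \mathbf{4_{dmba}})$. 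Your route shows that none of this machinery is needed for this particular lemma: the inequality holds in every Almost Gautama algebra (indeed in any dually quasi-De Morgan algebra) for any elements $a, b$ with $b{^*}' = b^*$, so the conclusion is both shorter to reach and strictly more general. What the paper's heavier route buys is not this lemma per se but the reusable facts Lemma \ref{LD}(a),(b), which are needed anyway in Lemmas \ref{LE1}, \ref{LI} and in the main Theorem \ref{3dbl4amal}; within the paper's overall economy the detour costs little, but as a standalone proof of Lemma \ref{C1} yours is the better one.
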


\begin{proof}
From Lemma \ref{LD} (a),            
we have $(x \lor y)' \lor (x \lor y)'^* =1$, whence
 $x' \lor (x \lor y)' \lor (x \lor y)'^* =1.$
So, we conclude
 $x' \lor (x \lor y)'^* = 1,$ as $x' \geq (x \lor y)', $
 which (replacing x by $x'$) implies 
$x'' \lor (x' \lor y)'^*=1$. Hence we have  
\begin{equation} \label{eq1}
 x \lor (x' \lor y)'^*=1. 
 \end{equation}
Then, 
\begin{align*}
b^* &=  b^* \lor [(b{^*}' \lor a)'^* \land (b{^*}' \lor a)'^{**} ]\\
&= [b^* \lor (b{^*}' \lor a)'^*] \land [b^*  \lor (b{^*}' \lor a)'^{**}]\\
&= 1 \land [b^* \lor (b{^*}' \lor a)'^{**}] &\text{ by (\ref{eq1})}\\
&= b^* \lor (b{^*}' \lor a)'^{**}\\
&=b^* \lor (b{^*} \lor a)'  &\text{  by Lemma \ref{LD} (b) and \ $b{^*}' = b^*$}.
\end{align*} 
completing the proof.
\end{proof}

Recall that an Almost Gautama algebra $\mathbf A$ has a Stone retract; so, as is well-known, it satisfies the identity: $(x \land y)^* \approx x^* \lor y^*$.

\begin{Lemma} \label{LI}
$a \lor x \lor (a \lor x)' = 1. $  
\end{Lemma}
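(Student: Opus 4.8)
The plan is to reduce the claim to Lemma \ref{LE1}, which already supplies the near-identity $(a \lor x) \lor [(a \lor x) \land (a \lor x)'^*]^* = 1$. Writing $u := a \lor x$ for brevity, it therefore suffices to show that the bracketed pseudocomplement simplifies to $u'$, for then Lemma \ref{LE1} reads $u \lor u' = 1$, which is exactly the assertion.

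First I would apply the Stone retract identity $(p \land q)^* \approx p^* \lor q^*$ (recalled just before the statement) to the term $[u \land u'^*]^*$, obtaining $u^* \lor u'^{**}$. Next, Lemma \ref{LD}(b), applied with $x$ replaced by $u$, collapses the double star so that $u'^{**} = u'$; hence $[u \land u'^*]^* = u^* \lor u'$. At this stage substitution into Lemma \ref{LE1} would give only the weaker relation $u \lor u^* \lor u' = 1$, so the remaining task is to dispose of the stray term $u^*$.

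The key observation is that $u^* = 0$. Since $u = a \lor x$, the pseudocomplement identity $(x \lor y)^* \approx x^* \land y^*$ (item (c) of the $p$-algebra definition) gives $u^* = a^* \land x^*$, and this is $0$ because $a^* = 0$ by hypothesis on $a$. Thus $[u \land u'^*]^* = u'$, and Lemma \ref{LE1} collapses to $(a \lor x) \lor (a \lor x)' = 1$, as desired.

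The computation is short, so there is no serious obstacle; the one point requiring care is the correct application of the Stone retract identity to the nested expression in tandem with Lemma \ref{LD}(b), together with the realization that the hypothesis $a^* = 0$ is precisely what forces $(a \lor x)^* = 0$ and thereby eliminates the otherwise troublesome $u^*$ summand. It is worth noting that neither Lemma \ref{LE} nor Lemma \ref{C1} is needed here; the argument rests only on Lemma \ref{LD}(b), Lemma \ref{LE1}, and the two Stone-type identities for $^*$.
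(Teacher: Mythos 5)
Your proof is correct and is essentially the paper's own argument: the paper likewise combines Lemma \ref{LE1}, Lemma \ref{LD}(b), the Stone retract identity $(p \land q)^* \approx p^* \lor q^*$, and the fact that $a^* = 0$ forces $(a \lor x)^* = 0$ (packaged there as the auxiliary identity $[(a \lor y) \land w]^* = w^*$). The only difference is cosmetic: the paper rewrites the goal $(a\lor x)'$ as $(a\lor x)'^{**}$ and matches it to the bracketed term of Lemma \ref{LE1}, whereas you simplify that bracketed term down to $(a\lor x)'$ — the same equalities read in the opposite direction.
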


\begin{proof}
Now, 

$[(a \lor y) \land x]^* =(a \lor y)^* \lor x^* = (a^* \land y^*) \lor x^* = (0 \land y^*) \lor x^* = x^*$.  Hence, we have
\begin{equation} \label{LH}
[(a \lor y) \land x]^* = x^*.
\end{equation}
Next,
\begin{align*}
a \lor x \lor (a \lor x)' &= a \lor x \lor (a \lor x)'^{**} &\text{ by Lemma \ref{LD}}\\
                                 &= (a \lor x)  \lor[(a \lor x) \land (a \lor x)'^*]^*  &\text{ by  (\ref{LH})}\\
                                 &=1 &\text{ by Lemma \ref{LE1}},
\end{align*}
arriving at the desired conclusion.
\end{proof}

\begin{Theorem} \label{3dbl4amal}
The variety $\mathbb{V}(\mathbf{3_{dblst}}, \mathbf{4_{dmba}})$ does not have the Amalgamation Property.
\end{Theorem}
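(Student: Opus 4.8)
The plan is to invoke the Gr\"atzer--Lakser criterion (Theorem \ref{TGr}), which is available here because $\mathbb{V} := \mathbb{V}(\mathbf{3_{dblst}}, \mathbf{4_{dmba}})$, being a subvariety of the discriminator variety $\mathbb{AG}$ (Theorem \ref{TDisc}), has CEP and consists of hereditarily subdirectly irreducible algebras. Thus it suffices to exhibit a single diagram of subdirectly irreducible algebras of $\mathbb{V}$ that fails to be amalgamable. By Theorem \ref{Main1} the subdirectly irreducible members of $\mathbb{V}$ are, for this subvariety, $\mathbf{2}$, $\mathbf{3_{dblst}}$ and $\mathbf{4_{dmba}}$, and $\mathbf{2}$ is a common subalgebra of the latter two. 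I would therefore take the diagram $\langle \mathbf{2}, \mathbf{3_{dblst}}, \mathbf{4_{dmba}}\rangle$ and argue by contradiction: were it amalgamable, there would be an algebra $\mathbf{A} \in \mathbb{V}$ into which both $\mathbf{3_{dblst}}$ and $\mathbf{4_{dmba}}$ embed over $\mathbf{2}$, supplying elements $a$ (the midpoint of $\mathbf{3_{dblst}}$, with $0<a<1$, $a^*=0$, $a'=1$) and $b, b^*$ (from $\mathbf{4_{dmba}}$, with $0<b<1$, $b \lor b^* = 1$, $b'=b$, $b^{*\prime}=b^*$) --- precisely the standing configuration under which Lemmas \ref{LD} through \ref{LI} were established.

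The core of the argument is to show these constraints are jointly contradictory, and the two results that do the work are Lemma \ref{LI} and Lemma \ref{C1}. First I would instantiate Lemma \ref{LI} at $x := b^*$, obtaining $a \lor b^* \lor (a \lor b^*)' = 1$. Next, Lemma \ref{C1} gives $(a \lor b^*)' = (b^* \lor a)' \leq b^*$, so the third disjunct is absorbed by the second, collapsing the identity to
\begin{equation*}
a \lor b^* = 1.
\end{equation*}

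To finish, I would meet both sides with $b$. Since the $\langle A, \lor, \land, ^*, 0, 1\rangle$ reduct of $\mathbf{A}$ is a Stone (hence $p$-) algebra, we have $b \land b^* = 0$, so distributivity yields $b = b \land (a \lor b^*) = (b \land a) \lor (b \land b^*) = b \land a$, that is, $b \leq a$. Because $'$ is a dual quasi-De Morgan operation it is order-reversing, so $b \leq a$ forces $a' \leq b'$; substituting $a'=1$ and $b'=b$ gives $1 \leq b$, hence $b = 1$, contradicting $0 < b < 1$. This contradiction shows the diagram $\langle \mathbf{2}, \mathbf{3_{dblst}}, \mathbf{4_{dmba}}\rangle$ is not amalgamable, and the theorem follows.

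I expect the genuine obstacle to lie not in this concluding deduction --- which, granting the lemmas, is a short chain of lattice manipulations capped by one use of order-reversal --- but in the conceptual step of seeing that the accumulated lemmas conspire to force $a \lor b^* = 1$. The delicate point is that the interaction between the Stone pseudocomplement $^*$ and the dual quasi-De Morgan operation $'$ (mediated by the defining identity (J), weak star-regularity $(^*)_w$, and (L1)) must be marshalled just so: recognizing that Lemma \ref{LI} evaluated at $b^*$, combined with Lemma \ref{C1}, makes $a$ and $b^*$ complementary is the crux, after which the order-reversal of $'$ clashes immediately with $a'=1$ and $b'=b$.
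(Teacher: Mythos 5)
Your proposal is correct, and up to the key intermediate identity it coincides with the paper's own proof: you take the same diagram $\langle \mathbf{2}, \mathbf{3_{dblst}}, \mathbf{4_{dmba}}\rangle$, and you derive $a \lor b^* = 1$ by exactly the paper's route, namely Lemma \ref{LI} instantiated at $x := b^*$ absorbed via Lemma \ref{C1}. Where you diverge is the endgame. The paper applies Lemma \ref{LI} a second time, at $x := b$, together with the absorption computation $b \lor (a \lor b)' = b$, to obtain $a \lor b = 1$; it then computes $a \lor b^* = (a \lor b^*) \land (a \lor b) = a$, which against $a \lor b^* = 1$ forces $a = 1$, a contradiction at $a$. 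You instead meet $a \lor b^* = 1$ with $b$, using $b \land b^* = 0$ and distributivity to get $b \leq a$, and then invoke antitonicity of $'$ (which does hold in any dually quasi-De Morgan algebra, since $b \leq a$ gives $b' = (a \land b)' = a' \lor b' \geq a'$) to conclude $1 = a' \leq b' = b$, contradicting $b < 1$. Both finishes are sound; yours is slightly more economical, trading the paper's second use of Lemma \ref{LI} and its De Morgan/absorption computation for the elementary order-reversal of $'$, and it localizes the contradiction at $b$ rather than at $a$. One further small remark: for refuting AP, Theorem \ref{TGr} is not strictly needed (any single non-amalgamable diagram suffices); invoking it is harmless, and indeed the paper frames its own negative proofs the same way.
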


\begin{proof}
Let $\mathbb{V} = \mathbb{V}(\mathbf{3_{dblst}}, \mathbf{4_{dmba}})$, and
consider the diagram $\langle \mathbf{2; 3_{dblst}, 4_{dmba}}\rangle$ in $\mathbb{V}$.
We claim that this diagram is not amalgamable in $\mathbb{V}$.  Suppose our claim is false.  
Then there exists an algebra $\mathbf A \in \mathbb{V}$ such that both $\mathbf{3_{dblst}}$ and $\mathbf{4_{dmba}}$ are subalgebras of $\mathbf A$.  The former implies that there is an $a \in \mathbf A $ such that $0<a<1$, $a^*=0$, $a'=1$,  
and the latter implies that there exists a $b \in \mathbf A $ such that $0<b<1$,  $b \lor b^*=1$, $b'=b$, $b{^*}'=b^*$.    
From Lemma \ref{LI}, we get    
$ a \lor b^* \lor (a \lor b^*)' = 1$,  which, by Lemma \ref{C1}, further simplifies to
\begin{equation} \label{Equa1}
  a \lor b^* = 1.     
\end{equation}
 Also observe that $b \lor (a \lor b)' = b' \lor (b \lor a)' = [b \land (b \lor a)]' = b'=b$; thus,  
 $b \lor (a \lor b)' = b.$  
Hence, it follows from Lemma \ref{LI}  
that $ a \lor b = 1. $  
Then, $a \lor b^* = (a \lor b^*) \land (a  \lor b) = a \lor (b \land b^*)=a$; hence, 
\begin{equation} \label{Equa2}
 a \lor b^* = a.  
\end{equation}
But then, (\ref{Equa1}) and (\ref{Equa2}) yield that $a =1.$  
Thus, we have arrived at a contradiction which proves the claim.  Thus the theorem is proved.
\end{proof}

\subsection{The variety $\mathbb{V}(\mathbf{3_{klst}}, \mathbf{4_{dmba}})$}

\

\medskip
In this subsection we will examine the variety $\mathbb{V}(\mathbf{3_{klst}}, \mathbf{4_{dmba}})$ with respect to (AP).  Again, we need some lemmas.

Throughout this subsection,  $\mathbb{V} := \mathbb{V}(\mathbf{3_{klst}}, \mathbf{4_{dmba}})$, $\mathbf{A} \in \mathbb{V}$. 

\begin{Lemma} \label{Le1}  Let $x,y \in \mathbf{A}$.  Then 
 $x{^*}'^* \land x^* = x^* \land x'.$ 
\end{Lemma}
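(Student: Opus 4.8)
The plan is to establish the identity $x{^*}'^* \land x^* = x^* \land x'$ as a purely equational consequence of the defining laws of $\mathbb{AG}$ together with the extra identity $x'' \approx x$ that cuts out $\mathbb{V}(\mathbf{3_{klst}},\mathbf{4_{dmba}})$ (see (g) of Theorem \ref{theo20230711}). Since involution holds here, $'$ is a genuine De Morgan operation, so I may freely use $(x\lor y)'\approx x'\land y'$ and $(x\land y)'\approx x'\lor y'$ in addition to the Stone-algebra facts $(x\land y)^*\approx x^*\lor y^*$ and $(x\lor y)^*\approx x^*\land y^*$. The target is an inequality in both directions, so I would prove $\le$ and $\ge$ separately, or better, manipulate one side into the other by a short chain of equalities.

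First I would record the star-regularity-flavored facts available in this variety. The weak star-regular identity $(^*)_w$ gives $x{^*}''\approx x^*$, which under involution just re-expresses $x^{**}$-type terms; more usefully, I would look for the consequence $x{^*}'^*\approx x^{**}$ or a comparison between $x{^*}'$ and $x^{**}$. The right-hand side $x^*\land x'$ is already the meet of the pseudocomplement with the De Morgan dual, a term that appears in the regularity axiom (R1), so I expect (R1) together with the Stone identity (St) to control it. My first concrete step is therefore to compute $x{^*}'^*$: applying the $\land$/$\lor$ De Morgan and Stone laws to $x{^*}'$ and then pseudocomplementing, I would aim to rewrite $x{^*}'^*$ in terms of $x^{**}$ and $x'$, so that meeting with $x^*$ collapses the $x^{**}$ part (since $x^*\land x^{**}\approx 0$ by (f), this annihilation is the key simplification).

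Concretely, the cleanest route I foresee is: show $x{^*}'^*\land x^* = x^*\land(\text{something})$ where the ``something'' simplifies to $x'$ once we intersect with $x^*$. Because $x^*\land x^{**}=0$, any $x^{**}$ summand disappears, and because $x^*\le x'$-type containments may follow from (R1) and star-regularity, the surviving term should be exactly $x'$. I would run the equalities inside one \begin{align*}\ldots\end{align*} block, justifying each line with the named identity, just as the surrounding lemmas (Lemma \ref{LD}, Lemma \ref{LE}) are formatted. To be safe I would verify the identity on the two generating algebras $\mathbf{3_{klst}}$ and $\mathbf{4_{dmba}}$ by hand first — for instance on $\mathbf{3_{klst}}$ the nontrivial value is at the midpoint $a$ where $a^*=0$, $a'=a$, so both sides are $0$; on $\mathbf{4_{dmba}}$ checking $x=b$ gives $b^*=a$-type behavior — this sanity check tells me the identity is true and pins down what the final simplified form must be, guiding the algebraic chain.

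The main obstacle I anticipate is the bookkeeping of nested $'$ and $^*$ on the term $x{^*}'^*$: the operations do not commute, and applying De Morgan versus Stone laws in the wrong order produces terms that look irreducible. The delicate point is choosing \emph{when} to invoke involution $x''\approx x$ to cancel a double prime, versus when to invoke $x^*\land x^{**}\approx 0$ to kill a term, since these are what break the symmetry and produce the asymmetric right-hand side $x^*\land x'$ rather than something self-dual. I expect the proof to hinge on a single well-chosen absorption step — rewriting $x^*$ as $x^*\land(x'\lor x'^*)$ using Lemma \ref{LD}(a)-style completeness, or using (R1) to bound $x{^*}'^*\land x^*$ below by $x^*\land x'$ — and the rest will be routine distributive-lattice manipulation that I would not grind through in this sketch.
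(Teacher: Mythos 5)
Your sketch has a genuine gap: it never identifies the one idea the proof actually needs, namely instantiating the $\mathbb{AG}$-axiom (L1), $(x \land x'^*)'^* \approx x \land x'^*$, at the element $x'$. Under the involution $x'' \approx x$ that defines this variety, (L1) at $x'$ reads $(x' \land x^*)'^* = x' \land x^*$, and expanding the left-hand side by the $\land$-De Morgan law and $(u \lor v)^* \approx u^* \land v^*$ gives $(x' \land x^*)'^* = (x \lor x{^*}')^* = x^* \land x{^*}'^*$ --- which is already the lemma. That short equational chain is the paper's entire proof. Your proposal lists (R1), (St), $(^*)_w$, $x^* \land x^{**} \approx 0$ and De Morgan bookkeeping as the expected tools, but (L1) never appears in it, and without (L1) the ``routine grind'' you defer cannot be completed.

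Worse, the concrete hinge steps you do propose fail in this variety. The absorption step rewriting $x^*$ as $x^* \land (x' \lor x'^*)$ invokes Lemma \ref{LD}(a), i.e.\ $x' \lor x'^* \approx 1$; but that lemma is derived from the identity (J) defining $\mathbb{V}(\mathbf{3_{dblst}}, \mathbf{4_{dmba}})$ and is false in $\mathbb{V}(\mathbf{3_{klst}}, \mathbf{4_{dmba}})$: in $\mathbf{3_{klst}}$, taking $x = a$ gives $a' \lor a'^* = a \lor 0 = a \neq 1$. Likewise the hoped-for consequence $x{^*}'^* \approx x^{**}$ is false there, since $a{^*}'^* = 0'^* = 1^* = 0$ while $a^{**} = 0^* = 1$; and it is not clear how (R1) alone would bound $x{^*}'^* \land x^*$ below by $x^* \land x'$. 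One remark in your favor: since $\mathbb{V}(\mathbf{3_{klst}}, \mathbf{4_{dmba}})$ is generated by the two algebras you examine, a complete verification of the identity on $\mathbf{3_{klst}}$ and $\mathbf{4_{dmba}}$ (all elements, not just the midpoints) would by itself be a correct proof, identities being preserved under homomorphic images, subalgebras and products; but you explicitly present that check only as a sanity test and leave the equational derivation --- which is where your plan breaks down --- unexecuted.
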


\begin{proof}
 \begin{align*}
x^*\land  x'  &= x''^* \land  x'  &\text{ as  $x'' = x$}  \\
&= (x''^* \land  x')'^*  &\text{ by (L1)}\\
&=  x''{^*}{'^*} \land x''^*  \\
&= x{^*}{'^*} \land x^*,  
\end{align*}
which provs the lemma.
\end{proof}

\begin{Lemma} \label{Le3}  
  Let $a,b \in \mathbf A$ such that $a^*=0$, $b'=b$ and $b{^*}' =b^*$.  Then
$b^* \land  (b \land  a)' = 0.$  
\end{Lemma}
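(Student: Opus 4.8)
The goal is to prove Lemma \ref{Le3}: under the hypotheses $a^*=0$, $b'=b$, $b{^*}'=b^*$ in $\mathbf{A} \in \mathbb{V}(\mathbf{3_{klst}}, \mathbf{4_{dmba}})$, we have $b^* \land (b \land a)' = 0$. The plan is to exploit the previous Lemma \ref{Le1}, which in this variety reads $x{^*}'^* \land x^* = x^* \land x'$, and to combine it with the De Morgan law for $'$ (available since $'$ is a dual quasi-De Morgan operation) together with the special values $a^*=0$, $b'=b$, $b{^*}'=b^*$.

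First I would rewrite the target expression using the $\land$-De Morgan law (ii) for the dual quasi-De Morgan operation, namely $(b \land a)' = b' \lor a' = b \lor a'$, so that $b^* \land (b \land a)' = b^* \land (b \lor a') = (b^* \land b) \lor (b^* \land a')$. Since $^*$ is the pseudocomplement, $b \land b^* = 0$, so this collapses to $b^* \land a'$, and the whole problem reduces to showing $b^* \land a' = 0$. The remaining work is therefore to prove $b^* \land a' = 0$ from $a^*=0$ and the hypotheses on $b$.

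The key step I expect to be the crux is to show that $a' \land b^* = 0$. Here I would try to bring in Lemma \ref{Le1} applied to the element $b^*$: it gives $b^*{^*}'^* \land b^{**} = b^{**} \land b^{*}{}'$. Using $b{^*}' = b^*$ and Stone algebra facts (such as $b^{**} \land b^* = 0$ and the identities from Lemma \ref{Le1} in the earlier subsection) should let me relate $b^*$ to its starred images. In parallel, the condition $a^*=0$ forces $a^{**}=1$, hence $a'^{**}$ and related terms become controllable; I would use the regularity identity (R1) and the relation between $'$ and $^*$ in this variety (where $x''=x$, so $'$ is an involutive De Morgan operation) to pin down $a' \land b^*$. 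The main obstacle will be correctly chaining the star/prime identities so that the meet $a' \land b^*$ is forced to $0$; I anticipate needing the fact that $a^*=0$ makes $a$ a ``dense'' element and that in the Kleene part the involution together with $b'=b$ constrains $b^*$ to behave like a Boolean complement on the relevant subalgebra.

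The hardest part will be this last reduction $a' \land b^* = 0$, since it must genuinely use the interaction of the two unary operations and the hypotheses simultaneously — neither operation alone suffices. I would look for an intermediate identity, most likely derivable from Lemma \ref{Le1} by substituting $x := b \land a$ or $x := b^*$, that isolates $a' \land b^*$ and equates it with an expression containing $a^*$ (which is $0$) or $b \land b^*$ (which is $0$). Once such an identity is in hand, substituting the hypothesis values collapses the meet to $0$, and combining with the De Morgan reduction of the first paragraph completes the proof.
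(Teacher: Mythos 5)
Your opening reduction is valid: by the dual quasi-De Morgan law $(b \land a)' = b' \lor a' = b \lor a'$, distributivity, and $b \land b^* = 0$, the lemma is indeed equivalent to the statement $b^* \land a' = 0$. The genuine gap is that this reduced goal --- which you yourself call the crux --- is never proved: you only conjecture that some identity ``most likely derivable from Lemma \ref{Le1}'' will finish it. Moreover, most of the ingredients you gesture at are dead ends: Lemma \ref{Le1} at $x := b^*$, combined with $b{^*}' = b^*$, only constrains the element $(b^{**})'$ and says nothing whatsoever about $a'$; regularity (R1), the density of $a$, and the involution-based considerations play no role. So, as written, your proposal is a correct reformulation of the problem plus an unverified guess that the remaining identity exists.

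The gap is closed by the other substitution you name, $x := b \land a$, combined with an auxiliary fact your sketch never isolates: since the Stone reduct satisfies $(x \land y)^* \approx x^* \lor y^*$ and $a^* = 0$, we have $(b \land a)^* = b^* \lor a^* = b^*$. Lemma \ref{Le1} at $x := b \land a$ then reads $(b \land a){^*}{'^*} \land b^* = b^* \land (b \land a)'$, and the left side vanishes: by hypothesis $b{^*}' = b^*$, so $(b \land a){^*}{'^*} \land b^* = b{^*}'^* \land b^* = b^{**} \land b^* = 0$ (the paper instead gets this from Lemma \ref{Le1} at $x := b$ together with $b' = b$). Hence $b^* \land (b \land a)' = 0$ outright --- which also shows your De Morgan reduction, while correct, is unnecessary. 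This completed argument is essentially the paper's own proof, which runs the single chain $0 = b^* \land b = b^* \land b' = b^* \land b{^*}'^* = b^* \land (b \land a){^*}{'^*} = b^* \land (b \land a)'$, applying Lemma \ref{Le1} twice. In short: you pointed at the right tool and even listed the right substitution, but the derivation itself --- the entire content of the lemma --- is absent from the proposal.
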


\begin{proof}
\begin{align*}
 0 &=b^* \land b \\
 &=  b^* \land b'  &\text{since } b'=b\\  
 &= b^* \land b{^*}'^*    &\text{ by Lemma \ref{Le1}} \\  
 &= b^* \land (b{^*} \lor a^*)'^* &\text{since } a^{*}=0\\
 &=b^* \land (b \land a){^*}{'^*} &\text{ since $\mathbf{A}$ has a Stone algebra-reduct} \\
 & = b^* \land (b \land a)'  &\text{ by Lemma \ref{Le1}},           
 \end{align*}
  proving the lemma.  
  \end{proof}

\begin{Theorem} \label{3klst4amal}
The variety $\mathbb{V}(\mathbf{3_{klst}}, \mathbf{4_{dmba}})$ does not have the Amalgamation Property.
\end{Theorem}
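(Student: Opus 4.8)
The plan is to apply Theorem \ref{TGr}, exactly as in the proofs of Theorems \ref{Gamal1} and \ref{3dbl4amal}. Since $\mathbb{AG}$ is a discriminator variety by Theorem \ref{TDisc}, both hypotheses of Theorem \ref{TGr} hold for $\mathbb{V} = \mathbb{V}(\mathbf{3_{klst}}, \mathbf{4_{dmba}})$, so it suffices to produce a single diagram of subdirectly irreducible algebras in $\mathbb{V}$ that fails to amalgamate. By Theorem \ref{Main1} the subdirectly irreducible members of $\mathbb{V}$ are $\mathbf{2}$, $\mathbf{3_{klst}}$ and $\mathbf{4_{dmba}}$, and $\mathbf{2}$ is a common subalgebra of the latter two; accordingly I would take the diagram $\langle \mathbf{2}; \mathbf{3_{klst}}, \mathbf{4_{dmba}}\rangle$. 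Arguing by contradiction, I would assume it amalgamates, obtaining $\mathbf{A} \in \mathbb{V}$ into which both $\mathbf{3_{klst}}$ and $\mathbf{4_{dmba}}$ embed. This furnishes an element $a \in \mathbf{A}$ coming from the middle of $\mathbf{3_{klst}}$, with $0 < a < 1$, $a^* = 0$ and $a' = a$, together with an element $b \in \mathbf{A}$ coming from $\mathbf{4_{dmba}}$, with $0 < b < 1$, $b' = b$, $b{^*}' = b^*$, and $b$ complemented by $b^*$ (so that $b \land b^* = 0$, $b \lor b^* = 1$, and $b^* \neq 0$).

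The engine of the argument is Lemma \ref{Le3}, whose hypotheses $a^* = 0$, $b' = b$ and $b{^*}' = b^*$ are precisely what the two embeddings supply; it yields $b^* \land (b \land a)' = 0$. I would then rewrite $(b \land a)'$ by the De Morgan law $(x \land y)' \approx x' \lor y'$, which is available throughout $\mathbb{AG}$, together with $b' = b$ and $a' = a$, obtaining $(b \land a)' = b \lor a$. Substituting and distributing over the complemented pair gives $0 = b^* \land (b \lor a) = (b^* \land b) \lor (b^* \land a) = b^* \land a$, so $a \land b^* = 0$.

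The final step is where the two reducts collide. From $a \land b^* = 0$ and the defining property of the pseudocomplement we get $a \leq b^{**}$; but inside the subalgebra $\mathbf{4_{dmba}}$ the element $b$ is genuinely (Boolean) complemented, whence $b^{**} = b$ and therefore $a \leq b$. As the pseudocomplement is order-reversing, this forces $b^* \leq a^* = 0$, that is $b^* = 0$, contradicting $b^* \neq 0$. The contradiction shows the diagram is not amalgamable, and hence $\mathbb{V}$ lacks the AP. I expect the only delicate point to be the implication $a \land b^* = 0 \Rightarrow a \leq b$: it hinges on recognizing that $b$ is truly complemented in $\mathbf{4_{dmba}}$, so that $b^{**} = b$ rather than merely $b \leq b^{**}$, which is exactly what lets the Stone-type pseudocomplement transport the $\mathbf{4_{dmba}}$-datum into a containment that the $\mathbf{3_{klst}}$-datum $a^* = 0$ can then annihilate.
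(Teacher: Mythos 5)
Your proposal is correct and takes essentially the same approach as the paper: the same diagram $\langle \mathbf{2};\, \mathbf{3_{klst}}, \mathbf{4_{dmba}}\rangle$, the same key Lemma \ref{Le3}, and the same intermediate identity $a \land b^* = 0$. The only difference is cosmetic: where you pass from $a \land b^* = 0$ to $a \leq b^{**} = b$ and then apply antitonicity of $^*$, the paper instead computes $b^* = b^* \land (b^* \land a)^* = b^* \land a^* = 0$ directly; both are routine $p$-algebra steps yielding the same contradiction $b^* = 0$.
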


\begin{proof}
Let $\mathbb{V} = \mathbb{V}(\mathbf{3_{klst}}, \mathbf{4_{dmba}})$, and
consider the diagram $\langle \mathbf{2; 3_{klst}, 4_{dmba}}\rangle$ in $\mathbb{V}$.
We claim that this diagram is not amalgamable  in $\mathbb{V}$.  Suppose our claim is false.  Then there exists an algebra $\mathbf A \in \mathbb{V}$ such that both $\mathbf{3_{klst}}$ and $\mathbf{4_{dmba}}$ are subalgebras of $\mathbf A$.  So, there are $a, b \in \mathbf A $ such that $0<a<1$, $a^*=0$, $a'=a$,  
and $0<b<1$, $b'=b$, $b{^*}'=b^*$, $b \lor b^*=1$.   
Now,
\begin{align*}
a \land b^* &= (b^* \land b) \lor  (b^* \land a)\\
&= b^* \land  (b \lor a)\\
&= b^* \land  (b' \lor a') &\text{since } b'=b \text{ and } a'=a\\
 &= b^* \land  (b \land a)' \\ 
 &= 0  &\text{by Lemma \ref{Le3}}.  
\end{align*}
Thus, we have
\begin{equation} \label{EqnT}
 a \land b^* = 0.   
\end{equation}
Hence,
\begin{align*}
b^*&= b^* \land 0^*\\
&= b^* \land (b^* \land a)^{*} &\text{by (\ref{EqnT})}\\
&= b^* \land a^*  &\text{since $^*$ is a pseudocomplement}\\
&= 0 &\text{ as $a^*=0$}.          
\end{align*}
Thus, $b^*=0$, which is a contradiction since $b \lor b^*=1$, proving the claim, from which the conclusion follows.  
\end{proof}

\subsection{The variety $\mathbb{AG}$ }

\

\medskip
Here we wish to show that $\mathbb{AG}$ does not have the AP.
Throughout this subsection,  
$\mathbf{A} \in \mathbb{AG}$, and $x, y, a, b \in \mathbf{A}$ such that $a^*=0$,  $a'=1$,
$b'=b$, $b^{**}=b$, and $b{^*}'=b^*$.  Hence, it is also immediate that $b'^{**}=b$.

\begin{Lemma} \label{88}
 $(x \land  x'^*) \lor (x \land  x'^*)^* = 1. $  
\end{Lemma}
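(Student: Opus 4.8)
The goal is to prove Lemma~\ref{88}, namely that $\mathbb{AG} \models (x \land x'^*) \lor (x \land x'^*)^* \approx 1$. The plan is to set $t := x \land x'^*$ and show that $t \lor t^* = 1$; equivalently, since $^*$ is a Stone pseudocomplement, it suffices to show that $t$ is a \emph{closed} element in the Stone sense, i.e.\ that $t = t^{**}$, or more directly that $t^*$ is the complement of $t$. The defining identity (L1) of $\mathbb{AG}$, namely $(x \land x'^*)'^* \approx x \land x'^*$, tells us precisely that $t = t'^*$. So the heart of the matter is to exploit the equation $t = t'^*$ together with the Stone structure to force $t \lor t^* = 1$.

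The key observation I would pursue is that $t'^*$ is automatically a closed (hence complemented) element of the Stone algebra reduct. Recall that in any Stone algebra every element of the form $z^*$ satisfies $z^* \lor z^{**} = 1$, so pseudocomplements are complemented by their double-pseudocomplements. Now by (L1) we have $t = t'^*$, which is of the form $z^*$ with $z = t'$. Therefore $t = t'^*$ is a pseudocomplement, and so $t \lor t^* = t'^* \lor t'^{**}$. Applying the Stone identity $t'^* \lor t'^{**} \approx 1$ (with the element $t'$ in the role of the generic variable) yields $t \lor t^* = 1$ directly. I would write this as a short chain: starting from $t \lor t^*$, rewrite the first $t$ using (L1) to get $t'^* \lor t^*$, then rewrite $t^* = (t'^*)^* = t'^{**}$ again using (L1) inside the star, giving $t'^* \lor t'^{**}$, and finish with the Stone identity.

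The main obstacle to watch for is the bookkeeping of which variable the Stone identity is instantiated at, and making sure the rewriting of $t^*$ is legitimate. Concretely, after substituting $t = t'^*$ by (L1), I must justify $t^* = t'^{**}$; this follows by applying $^*$ to both sides of the identity $t = t'^*$, since (L1) is a genuine identity and equals are preserved under applying the operation $^*$. Once both summands are expressed as $t'^*$ and $t'^{**}$, the Stone identity $y^* \lor y^{**} \approx 1$ instantiated at $y := t'$ closes the argument. I expect this to be the cleanest route and to avoid the longer manipulations (via Lemmas~\ref{LD}--\ref{LE}) used in the earlier subsections, because (L1) already packages $x \land x'^*$ as a pseudocomplement, and pseudocomplements in a Stone algebra are exactly the complemented elements. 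If for some reason the direct substitution is awkward to typeset, an equivalent fallback is to display the computation as an \texttt{align*} block with annotations ``by (L1)'' and ``by the Stone identity'' on the relevant lines.
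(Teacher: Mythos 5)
Your proof is correct and is essentially the paper's own argument: both rewrite $x \land x'^*$ as $(x \land x'^*)'^*$ and its pseudocomplement as $(x \land x'^*)'^{**}$ via (L1), then conclude with the Stone identity instantiated at the element $(x \land x'^*)'$. The only difference is cosmetic: the paper inserts an intermediate expansion of $(x \land x'^*)'$ into $x' \lor x'{^*}'$ by the De Morgan law before invoking the Stone identity, a step your streamlined version rightly omits as unnecessary.
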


\begin{proof}
\begin{align*}
(x \land  x'^*) \lor (x \land  x'^*)^*  &=  (x \land  x'^*)'^* \lor (x \land  x'^*){'{^*}}{^*}  &\text{ by (L1)}\\
&= (x' \lor  x'{^*}')^* \lor (x' \lor  x'{^*}')^{**} \\
&= 1 &\text{ by the Stone identity},\\
\end{align*} 
proving the lemma. 
\end{proof}

\begin{Lemma}\label{231}
  $ [(x \lor a) \land  y]^* = y^*.$  
\end{Lemma}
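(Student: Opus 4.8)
The plan is to reduce the claim to two standard pseudocomplement identities available in any Almost Gautama algebra, together with the hypothesis $a^* = 0$. The key observation is that $\mathbf{A}$ has a Stone algebra reduct, so (as recalled just before Lemma~\ref{LI}) it satisfies the meet law $(u \land v)^* \approx u^* \lor v^*$, and, being a $p$-algebra, it also satisfies the join law $(u \lor v)^* \approx u^* \land v^*$ (this is property (c) in the definition of a $p$-algebra). With these two identities in hand the computation is entirely formal, and indeed it mirrors the derivation of equation~(\ref{LH}) inside the proof of Lemma~\ref{LI}.

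Concretely, I would first apply the meet law with $u := x \lor a$ and $v := y$ to obtain
\[
[(x \lor a) \land y]^* = (x \lor a)^* \lor y^*.
\]
Next I would evaluate $(x \lor a)^*$ by the join law, giving $(x \lor a)^* = x^* \land a^*$, and then invoke the hypothesis $a^* = 0$ to conclude $(x \lor a)^* = x^* \land 0 = 0$. Substituting back yields
\[
[(x \lor a) \land y]^* = 0 \lor y^* = y^*,
\]
which is exactly the asserted identity.

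I expect no genuine obstacle here: the only inputs are the two De~Morgan-type pseudocomplement laws, both of which are guaranteed by the Stone (equivalently $p$-algebra) reduct of every member of $\mathbb{AG}$, and the defining property $a^* = 0$ of the distinguished element $a$. The single point worth flagging is simply that one must use the \emph{join} law $(x \lor a)^* \approx x^* \land a^*$ to expose the factor $a^*$; once that factor is present, $a^* = 0$ collapses $(x \lor a)^*$ to $0$ and the result is immediate.
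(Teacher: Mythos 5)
Your proof is correct: the meet law $(u \land v)^* \approx u^* \lor v^*$ does hold in every Almost Gautama algebra (as the paper itself recalls before Lemma \ref{LI}, via the Stone reduct), the join law is $p$-algebra axiom (c), and the computation $[(x \lor a) \land y]^* = (x\lor a)^* \lor y^* = (x^* \land a^*) \lor y^* = 0 \lor y^* = y^*$ is sound. However, it is not the route the paper takes for Lemma \ref{231}: the paper's proof avoids the Stone identity altogether, writing $[(x \lor a) \land y]^* = [(x \lor a)^{**} \land y^{**}]^* = [(x^* \land a^*)^* \land y^{**}]^* = (1 \land y^{**})^* = y^*$, which uses only the $p$-algebra identities $(u \land v)^{**} \approx u^{**} \land v^{**}$, $(u \lor v)^* \approx u^* \land v^*$ and $w^{***} \approx w^*$. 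So the paper's argument is valid in any distributive $p$-algebra, whereas yours genuinely needs the Stone reduct; in exchange, yours is shorter and, as you correctly observe, it is literally the same computation the paper uses to establish equation (\ref{LH}) inside the proof of Lemma \ref{LI} (the same identity up to renaming of variables). Either derivation is acceptable here, since the ambient variety is $\mathbb{AG}$ and the Stone identity is available; the only thing you would lose is the (unstated but real) extra generality of the paper's version.
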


\begin{proof}  
$[(x \lor a) \land  y]^*= [(x \lor a)^{**} \land y^{**}]^* = [(x^* \land a^*)^* \land y^{**}]^*= (1 \land y^{**})^*=y^*$, as $a^*=0$. 
\end{proof}

\begin{Lemma} \label{232}
  $x \lor a \lor (x \lor a)'^{**} = 1.$   
\end{Lemma}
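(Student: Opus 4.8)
\textbf{Proof proposal for Lemma \ref{232}.}

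The plan is to prove $x \lor a \lor (x \lor a)'^{**} = 1$ by reducing it, via the already-established Stone identity and Lemma \ref{88}, to an expression that I can show equals $1$. The key observation is that $(x \lor a)'^{**}$ should be rewritten using the $p$-algebra structure so that it connects with the meet $(x \lor a) \land (x \lor a)'^*$, whose join with its own pseudocomplement is $1$ by Lemma \ref{88}. I expect this lemma to play the same structural role that Lemma \ref{LI} played in the $\mathbb{V}(\mathbf{3_{dblst}}, \mathbf{4_{dmba}})$ case: it is the technical fact feeding the final non-amalgamability argument for $\mathbb{AG}$.

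First I would try to express $(x \lor a)'^{**}$ as the pseudocomplement of the meet $(x \lor a) \land (x \lor a)'^*$. Applying Lemma \ref{231} (with $x \lor a$ in the role of $x \lor a$ and $y := (x \lor a)'^*$), I get $[(x \lor a) \land (x \lor a)'^*]^* = (x \lor a)'^{**}$. This is the crucial identification, since it turns the troublesome double-star term into the star of a single meet. Substituting, the goal $x \lor a \lor (x \lor a)'^{**} = 1$ becomes
\begin{equation*}
(x \lor a) \lor [(x \lor a) \land (x \lor a)'^*]^* = 1.
\end{equation*}

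Now I would invoke Lemma \ref{88} applied to $x \lor a$, which gives $\bigl((x \lor a) \land (x \lor a)'^*\bigr) \lor \bigl((x \lor a) \land (x \lor a)'^*\bigr)^* = 1$. Writing $u := (x \lor a) \land (x \lor a)'^*$, Lemma \ref{88} says $u \lor u^* = 1$, and since $u \le x \lor a$, we have $(x \lor a) \lor u^* \ge u \lor u^* = 1$, hence equals $1$. This is exactly the displayed goal, completing the proof. The only mild subtlety to check is that the hypotheses of Lemma \ref{231} are genuinely met — that is, that $a^* = 0$ is the sole condition used, which it is, so the substitution $y := (x \lor a)'^*$ is legitimate for arbitrary $x \in \mathbf{A}$.

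The main obstacle I anticipate is purely bookkeeping: correctly matching the star/prime patterns so that Lemma \ref{231} and Lemma \ref{88} apply verbatim, rather than any conceptual difficulty. In particular one must be careful that the term produced by Lemma \ref{231} is precisely $(x \lor a)'^{**}$ and not some near-variant, and that the distributivity/monotonicity step $u \le x \lor a$ is valid (it follows immediately since $u$ is a meet with $x \lor a$ as one factor). Once these two earlier lemmas are lined up, the argument is a two-line substitution with no residual computation.
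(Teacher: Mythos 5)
Your proof is correct and follows essentially the same route as the paper's: both rest on applying Lemma \ref{88} to $x \lor a$ together with the monotonicity observation $(x \lor a) \land (x \lor a)'^* \leq x \lor a$, and on Lemma \ref{231} with $y := (x \lor a)'^*$ to identify $[(x \lor a) \land (x \lor a)'^*]^*$ with $(x \lor a)'^{**}$. The only difference is cosmetic ordering (the paper first derives $x \lor (x \land x'^*)^* = 1$ for general $x$ and then substitutes $x \lor a$, whereas you rewrite the goal via Lemma \ref{231} first), so there is nothing to add.
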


\begin{proof}
$x \lor (x \land  x'^*)^* \geq (x \land  x'^*) \lor (x \land  x'^*)^* = 1$ by Lemma \ref{88}.  Hence,
\begin{equation} \label{223}
x  \lor (x \land  x'^*)^* = 1.
\end{equation}
Replacing $x$ by $x \lor a$ in (\ref{223}), we get
$(x \lor a) \lor [(x \lor a) \land (x \lor a)'^*]^* =1$, which, by Lemma \ref{231}, simplifies to $(x \lor a) \lor (x \lor a)'^{**} = 1.$ 
\end{proof}

\begin{Lemma} \label{LX}
$y'' \lor y^* = (x' \lor x'{^*})' \lor y'' \lor y{^*}.$
\end{Lemma}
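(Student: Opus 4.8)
The plan is to reduce the claimed identity to a single inequality and then to sharpen a consequence of regularity by means of the double-prime operation. Since the right-hand side $(x' \lor x'^*)' \lor y'' \lor y^*$ always dominates $y'' \lor y^*$, the asserted equality is equivalent to the inequality $(x' \lor x'^*)' \le y'' \lor y^*$, and this is what I would establish.

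First I would record the elementary facts to be used: the operation $'$ is antitone (immediate from $(x \land y)' \approx x' \lor y'$); the triple-prime law $x''' = x'$ holds (from $x'' \le x$ applied both directly and, via antitonicity, to $x'$); and by weak star-regularity (the identity $x{^*}'' \approx x^*$) we have $(x'^*)'' = x'^*$ and $(y^*)'' = y^*$. Writing $w := x' \lor x'^*$, I would first show that $w$ is fixed by double-prime: using $(u \lor v)'' \approx u'' \lor v''$ together with $x''' = x'$ and $(x'^*)'' = x'^*$ gives $w'' = x' \lor x'^* = w$. Applying $'$ once more then yields $(w')'' = w''' = w'$, so the element $(x' \lor x'^*)'$ is $''$-closed; this closedness is the crucial structural fact.

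Next I would produce a first, weaker bound from regularity. Since $x' \lor x'^* \ge x'$ and $x' \lor x'^* \ge x'^*$, antitonicity of $'$ gives $(x' \lor x'^*)' \le x''$ and $(x' \lor x'^*)' \le x'{^*}'$, hence $(x' \lor x'^*)' \le x'' \land x'{^*}'$. Instantiating (R1) at $u := x''$ (so that the term $u'{^*}'$ becomes $x'{^*}'$, using $x''' = x'$) and at $v := y$ gives $x'' \land x'{^*}' \le y \lor y^*$, whence $(x' \lor x'^*)' \le y \lor y^*$.

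Finally I would sharpen $y$ to $y''$. Applying the (monotone) double-prime operation to $(x' \lor x'^*)' \le y \lor y^*$, and simplifying the right-hand side by $(y \lor y^*)'' = y'' \lor (y^*)'' = y'' \lor y^*$ while invoking the $''$-closedness of the left-hand side established above, yields $(x' \lor x'^*)' \le y'' \lor y^*$, which is exactly the inequality wanted. I expect the main obstacle to be precisely this last sharpening: regularity delivers only the bound with $y$ rather than $y''$ on the right, and the gap is closed solely by exploiting that $(x' \lor x'^*)'$ lies in the image of double-prime, so that applying $''$ leaves it unchanged while contracting $y$ to $y''$.
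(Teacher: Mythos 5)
Your proof is correct. It turns on the same mechanism as the paper's proof: apply the double-prime operation to a consequence of (R1), so that weak star-regularity ($x{^*}'' \approx x^*$) and the law $(u \lor v)'' \approx u'' \lor v''$ keep the starred terms fixed while $y$ contracts to $y''$. But the organization is genuinely different. The paper stays equational throughout: from (R1) it gets $y \lor y^* = (x \land x'{^*}') \lor y \lor y^*$, applies $''$ to both sides, and rewrites $(x \land x'{^*}')''$ as $(x' \lor x'{^*}'')' = (x' \lor x'^*)'$ using the $\land$-De Morgan law and weak star-regularity; it needs neither antitonicity, nor the triple-prime law, nor any instantiation of (R1) other than at $x$ itself. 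You instead reduce the identity to the inequality $(x' \lor x'^*)' \leq y'' \lor y^*$, bound $(x' \lor x'^*)'$ above by $x'' \land x'{^*}'$ via antitonicity, feed that into (R1) instantiated at $x''$ (which is why you must first prove $x''' = x'$), and then close the gap by monotonicity of $''$ together with the $''$-closedness of $(x' \lor x'^*)'$. Your route costs a few extra (standard) facts, but it isolates the structural point---that $(x' \lor x'^*)'$ lies in the image of $''$---which the paper's computation performs implicitly by hitting an entire equation with $''$ at once.
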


\begin{proof}
From 
(R1) we get
 $y \lor y^* = (x \land x'{^*}') \lor y \lor y^*,$ from which we have 
 $y'' \lor y{^*}'' = (x \land x'{^*}')'' \lor y'' \lor y{^*}''.$  
Hence, 
\begin{align*}
y'' \lor y^* &= y'' \lor y{^*}''  &\text{  by the axiom: $x{^*}'' \approx x^*$}, \\
&= (x \land x'{^*}')'' \lor y'' \lor y{^*}'' \\    
&=  (x' \lor x'{^*}'')' \lor y'' \lor y{^*}'' \\   
&=  (x' \lor x'^*)' \lor y'' \lor y{^*} &\text{  by the axiom: $x{^*}'' \approx x^*$},   
\end{align*}
proving the lemma.
\end{proof}

\begin{Lemma} \label{181}
 $x' \lor x'^* = 1.$    
\end{Lemma}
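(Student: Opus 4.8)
The plan is to deduce this not as an identity of $\mathbb{AG}$ — it is \emph{not} one, since it fails at the middle element of $\mathbf{3_{klst}}$ — but from Lemma \ref{LX} by exploiting the distinguished element $a$ fixed throughout this subsection. The governing idea is that the standing hypotheses $a'=1$ and $a^*=0$ force the right-hand side of Lemma \ref{LX} to collapse once we instantiate its free variable $y$ at $a$. So the first thing I would do is record that $a''\lor a^* = 0$: by hypothesis $a^*=0$, while $a''=(a')'=1'=0$ using $a'=1$ together with the dual quasi-De Morgan identity $1'\approx 0$.

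Next I would instantiate Lemma \ref{LX} at $y:=a$. This gives
\[
0 = a'' \lor a^* = (x' \lor x'^*)' \lor a'' \lor a^* = (x' \lor x'^*)',
\]
where the middle equality is exactly Lemma \ref{LX} and the outer two equalities use $a''\lor a^*=0$. Hence $(x' \lor x'^*)' = 0$.

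It then remains to upgrade $(x' \lor x'^*)' = 0$ to $x' \lor x'^* = 1$, and here I would apply $'$ once more and invoke the dual quasi-De Morgan axiom $z''\le z$. From $(x' \lor x'^*)' = 0$ we obtain $(x' \lor x'^*)'' = 0' = 1$, and since $(x' \lor x'^*)'' \le x' \lor x'^*$, this yields $1 \le x' \lor x'^*$, i.e. $x' \lor x'^* = 1$.

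The step I expect to be the genuine obstacle is conceptual rather than computational: recognizing that one must feed the \emph{specific} element $a$ into Lemma \ref{LX} rather than hunting for an identity proof. A secondary point to be careful about is that $'$ is only a dual quasi-De Morgan operation, so one may not assume $z''=z$; the weaker inequality $z''\le z$ is precisely what is available and precisely what closes the argument, which is why the conclusion follows even though $'$ is not an involution in general.
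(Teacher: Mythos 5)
Your proof is correct and takes essentially the same route as the paper: both arguments instantiate Lemma \ref{LX} at $y:=a$ and use $a'=1$ (hence $a''=0$) together with $a^*=0$ to conclude $(x'\lor x'^*)'=0$. Your finishing step is in fact slightly cleaner than the paper's: where the paper expands $(x'\lor x'^*)''$ via the identity $(x\lor y)''\approx x''\lor y''$ and then needs $x'''=x'$ and the weak star-regularity axiom $x{^*}''\approx x^*$ to simplify the two joinands, you simply apply $'$ once more and invoke the dual quasi-De Morgan inequality $z''\le z$, which closes the argument immediately.
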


\begin{proof}
From Lemma \ref{LX} we have
 $a'' \lor a^* = (x' \lor x'^*)' \lor a'' \lor a^*$, which implies $ (x' \lor x'^*)' = 0$, since $a' = 1$ and $a^* = 0.$  It follows that
 $ x''' \lor x'{^*}'' =0'$, whence  
 $ x' \lor x'{^*}'' =1,$  from which, in view of the axiom
 $x{^*}'' = x^*,$  we get
 $x' \lor x'^*=1.$ 
\end{proof}

\begin{Lemma} \label{185}
 $x'^{**} = x'.$  
\end{Lemma}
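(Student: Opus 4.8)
The plan is to reproduce, one level up in the subvariety lattice, the short distributive argument that established Lemma \ref{LD}(b) in the subsection on $\mathbb{V}(\mathbf{3_{dblst}}, \mathbf{4_{dmba}})$. The present statement is exactly its analogue for the full variety $\mathbb{AG}$, and the only input that replaces the earlier Lemma \ref{LD}(a) is Lemma \ref{181}, which now supplies the identity $x' \lor x'^* = 1$. Beyond this, the sole ingredient is that the $^*$-reduct of an Almost Gautama algebra is a Stone algebra, hence a $p$-algebra.

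First I would record the two pseudocomplement facts, applied to the element $x'$ rather than to $x$ itself. From the $p$-algebra inequality $y \leq y^{**}$ (with $y := x'$) we get $x' \leq x'^{**}$, so that $x' \lor x'^{**} = x'^{**}$. From the $p$-algebra identity $y^* \land y^{**} \approx 0$ (with $y := x'$) we get $x'^* \land x'^{**} = 0$, so that this meet may be adjoined to $x'$ without effect. These are both legitimate because $x'$ is an element of the Stone-algebra reduct, and no property of the dual quasi-De Morgan operation is invoked at this stage.

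Then the computation is a single distributive chain: starting from $x' = x' \lor (x'^* \land x'^{**})$, I would distribute to obtain $(x' \lor x'^*) \land (x' \lor x'^{**})$, replace the first factor by $1$ using Lemma \ref{181}, and simplify to $x' \lor x'^{**}$, which equals $x'^{**}$ by the inequality just noted. This gives $x' = x'^{**}$, as required.

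I do not expect any genuine obstacle. The statement is a purely lattice-theoretic consequence of Lemma \ref{181} together with the Stone reduct, and the only point demanding care is to ensure that each appeal to the pseudocomplement laws is made for $x'$ and not for $x$, all other steps being routine distributivity in a bounded distributive lattice.
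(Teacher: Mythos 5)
Your proof is correct and uses exactly the same ingredients as the paper's: Lemma \ref{181}, distributivity, and the two pseudocomplement laws applied to $x'$; the paper's proof is the same chain of equalities run in the reverse direction (it shows $x' \lor x'^{**} = x'$, hence $x'^{**} \leq x'$, and combines this with $x' \leq x'^{**}$), which is also precisely the computation used for Lemma \ref{LD}(b), as you note.
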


\begin{proof}
Using Lemma \ref{181} we have 
$x' \lor x'^{**}  
= (x' \lor x'^{**}) \land (x' \lor x'^*) = x' \lor (x'^{**} \land x'^*) =x'$.  Thus $x'^{**} \leq x'$, from which we conclude
 $x'^{**} =x'.$
\end{proof}

\begin{Lemma} \label{214}
 $x' \lor y' = (x'^* \land  y'^*)^*.$  
\end{Lemma}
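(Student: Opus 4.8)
The plan is to prove the identity by computing the right-hand side $(x'^* \land y'^*)^*$ directly and collapsing the resulting double-stars. The whole argument rests on two ingredients already available: the de Morgan law for the pseudocomplement over a meet, and Lemma \ref{185}.

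First I would recall, as noted just before Lemma \ref{LI}, that every Almost Gautama algebra has a Stone retract, so its $\langle A, \lor, \land, {}^*, 0, 1\rangle$-reduct is (in particular) a $p$-algebra and therefore satisfies the identity $(u \land v)^* \approx u^* \lor v^*$. Applying this identity with $u := x'^*$ and $v := y'^*$ immediately yields $(x'^* \land y'^*)^* = x'^{**} \lor y'^{**}$. At this point the right-hand side has been rewritten entirely in terms of double-starred negations of $x'$ and $y'$.

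Next I would invoke Lemma \ref{185}, which gives $x'^{**} = x'$; since that lemma is stated for an arbitrary element, the same identity holds with $x$ replaced by $y$, so $y'^{**} = y'$ as well. Substituting these two equalities into the previous expression gives $(x'^* \land y'^*)^* = x' \lor y'$, which is exactly the claimed identity, completing the proof.

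There is essentially no obstacle here: the only subtlety is remembering that the pseudocomplement de Morgan law $(u \land v)^* \approx u^* \lor v^*$ is available from the Stone reduct, after which the result is a two-step chain of equalities. In particular, I expect that regularity (R1), weak star-regularity $(^*)_w$, and (L1) play no role in this particular lemma — they have already been used to secure Lemma \ref{185} — so this should be among the shortest arguments in the subsection.
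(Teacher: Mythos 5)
Your proof is correct and is essentially identical to the paper's own argument: the paper likewise rewrites $(x'^* \land y'^*)^*$ as $x'^{**} \lor y'^{**}$ via the Stone-reduct identity $(u \land v)^* \approx u^* \lor v^*$ and then collapses both double stars using Lemma \ref{185}. Your closing observation that (R1), $(^*)_w$ and (L1) enter only through the earlier lemmas is also accurate.
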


\begin{proof}
By Lemma \ref{185}, we have $(x'^* \land  y'^*)^* = x'^{**}\lor y'^{**}  
= x' \lor y'.$  
\end{proof}

\begin{Lemma} \label{215}
 $(x \lor y)'' =  (x''^* \land  y''^*)^*. $  
\end{Lemma}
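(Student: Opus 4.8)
The plan is to reduce Lemma~\ref{215} to the already-established Lemma~\ref{214}, using the observation that the right-hand side $(x''^* \land y''^*)^*$ is just the right-hand side of Lemma~\ref{214} after the substitution $x \mapsto x'$, $y \mapsto y'$. So the whole statement should follow by a single substitution once the outer double prime on the left is handled by the De Morgan-type law for $'$.

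First I would rewrite the left-hand side using axiom (iii) of the dual quasi-De Morgan reduct, namely $(x \lor y)'' \approx x'' \lor y''$. This turns the goal into proving the equality $x'' \lor y'' = (x''^* \land y''^*)^*$. Next, I would note that Lemma~\ref{214}, which asserts $x' \lor y' = (x'^* \land y'^*)^*$, is an identity valid for all elements of $\mathbf A$, and hence remains valid under the substitution $x \mapsto x'$ and $y \mapsto y'$. Carrying out this substitution produces precisely $x'' \lor y'' = (x''^* \land y''^*)^*$, i.e.\ the reformulated goal. Chaining the two equalities then yields $(x \lor y)'' = (x''^* \land y''^*)^*$, which is the claim.

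As an alternative (and as a sanity check) one can argue directly without citing Lemma~\ref{214}: apply the Stone-reduct De Morgan law $(u \land v)^* \approx u^* \lor v^*$ to the right-hand side to obtain $(x''^* \land y''^*)^* = x''^{**} \lor y''^{**}$, then invoke Lemma~\ref{185} in the form $x''^{**} = x''$ and $y''^{**} = y''$ (this is Lemma~\ref{185} applied to $x'$ and $y'$), together with axiom (iii) as above. I do not anticipate any genuine obstacle in this lemma; the only point that needs care is to record that Lemmas~\ref{214} and~\ref{185} are universally quantified identities of the variety, so replacing $x$ by $x'$ and $y$ by $y'$ is legitimate.
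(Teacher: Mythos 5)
Your proof is correct and follows essentially the same route as the paper: substitute $x \mapsto x'$, $y \mapsto y'$ in Lemma~\ref{214} and combine with the dual quasi-De Morgan law $(x \lor y)'' \approx x'' \lor y''$ (which the paper uses implicitly in its final step). Your alternative direct argument is also fine, though it amounts to unfolding the proof of Lemma~\ref{214} itself, which already rests on Lemma~\ref{185} and the Stone-reduct De Morgan law.
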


\begin{proof}
Replacing $x$ by $x'$ and $y$ by $y'$  in Lemma \ref{214},  
we get 
$x'' \lor y'' = (x''^* \land  y''^*)^*$, which implies 
$(x \lor y)'' =  (x''^* \land  y''^*)^*. $  
\end{proof}

\begin{Lemma} \label{256}
 $a \lor x \lor x' = 1.$  
\end{Lemma}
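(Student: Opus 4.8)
The plan is to combine the two immediately preceding lemmas and then invoke the order-reversing behaviour of $'$, exactly in the spirit of how Lemma~\ref{LI} was obtained in the $\mathbb{V}(\mathbf{3_{dblst}}, \mathbf{4_{dmba}})$ subsection. First I would apply Lemma~\ref{232}, which already gives $x \lor a \lor (x \lor a)'^{**} = 1$. The only discrepancy between this and the desired identity is the triple superscript $'^{**}$ on the last term, so the natural next move is to collapse it.

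Second, I would note that Lemma~\ref{185} asserts the identity $z'^{**} = z'$ valid for every element $z$ of $\mathbf{A}$; applying it with $z := x \lor a$ yields $(x \lor a)'^{**} = (x \lor a)'$. Substituting this into the previous equality gives $x \lor a \lor (x \lor a)' = 1$.

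The final step replaces $(x \lor a)'$ by the (possibly larger) element $x'$. Since $\mathbf{A}$ has a dually quasi-De Morgan reduct, the $\land$-De Morgan law $(u \land v)' \approx u' \lor v'$ forces $'$ to be order-reversing: from $x \leq x \lor a$ we obtain $(x \lor a)' \leq x'$. Hence $a \lor x \lor x' \geq a \lor x \lor (x \lor a)' = 1$, which forces $a \lor x \lor x' = 1$, as required.

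I do not anticipate a genuine obstacle here, as the argument is a short assembly of Lemmas~\ref{232} and~\ref{185}. The only two points demanding care are that Lemma~\ref{185} is an identity and so legitimately applies to the compound term $x \lor a$ (not merely to the bare variable $x$), and that the order comparison must be used in the correct direction, namely to enlarge $(x \lor a)'$ up to $x'$ rather than to shrink it, so that the value $1$ is preserved in the join.
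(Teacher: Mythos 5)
Your proposal is correct and follows exactly the paper's own proof: Lemma~\ref{232} combined with Lemma~\ref{185} (applied to the element $x \lor a$) gives $x \lor a \lor (x \lor a)' = 1$, and then $x' \geq (x \lor a)'$ finishes the argument. Your extra care about applying Lemma~\ref{185} to the compound term and about the direction of the order-reversal is exactly the right justification for the two steps the paper leaves implicit.
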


\begin{proof}  
 By Lemma \ref{232} and Lemma \ref{185} we have 
 $x \lor a  \lor (x \lor a)' = 1$, which implies $x \lor a \lor x' =1$, since $x' \geq (x \lor a)'$.
\end{proof}

\begin{Theorem} \label{AGamal}
The variety $\mathbb{AG}$ does not have the Amalgamation Property.
\end{Theorem}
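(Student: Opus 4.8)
The plan is to invoke the Gr\"atzer--Lakser criterion (Theorem \ref{TGr}). Since $\mathbb{AG}$ is a discriminator variety (Theorem \ref{TDisc}), it has the CEP and all of its subdirectly irreducible members are hereditarily subdirectly irreducible, so Theorem \ref{TGr} applies and it suffices to exhibit a single diagram $\langle \mathbf A; \mathbf B, \mathbf C\rangle$ of subdirectly irreducible algebras with $\mathbf A \leq \mathbf B \cap \mathbf C$ that fails to amalgamate in $\mathbb{AG}$. By Theorem \ref{Main1} the subdirectly irreducibles are known explicitly, and $\mathbf 2$ embeds in each of them. I would take the diagram $\langle \mathbf 2; \mathbf{3_{dblst}}, \mathbf{4_{dmba}}\rangle$, i.e.\ the very diagram that already defeats the subvariety $\mathbb{V}(\mathbf{3_{dblst}}, \mathbf{4_{dmba}})$ in Theorem \ref{3dbl4amal}.

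The step I expect to be the real obstacle is conceptual rather than computational: one cannot simply quote Theorem \ref{3dbl4amal}, because passing from $\mathbb{V}(\mathbf{3_{dblst}}, \mathbf{4_{dmba}})$ up to the larger variety $\mathbb{AG}$ makes strictly more algebras available as potential amalgams, so the failure of amalgamation must be re-established using only identities valid throughout $\mathbb{AG}$. This is exactly the purpose of Lemmas \ref{88}--\ref{256}, which re-derive, in the $\mathbb{AG}$ setting and from (L1), the Stone identity, regularity (R1), and weak star-regularity $x{^*}'' \approx x^*$, the key relation $a \lor x \lor x' = 1$ recorded in Lemma \ref{256}. Thus the bulk of the work is front-loaded into those lemmas, and the theorem itself is a short combination.

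Granting the lemmas, I would argue as follows. Suppose the diagram amalgamates; then there is $\mathbf A \in \mathbb{AG}$ containing copies of both $\mathbf{3_{dblst}}$ and $\mathbf{4_{dmba}}$ over $\mathbf 2$, which supplies elements $a,b$ with $0<a<1$, $a^*=0$, $a'=1$ and $0<b<1$, $b'=b$, $b^{**}=b$, $b{^*}'=b^*$ --- precisely the standing hypotheses of the subsection. Instantiating Lemma \ref{256} at $x=b^*$ and using $b{^*}'=b^*$ collapses $a \lor b^* \lor (b^*)'$ to $a \lor b^*$, giving $a \lor b^* = 1$; instantiating it at $x=b$ and using $b'=b$ gives $a \lor b = 1$. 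Since $^*$ is a pseudocomplement we have $b \land b^* = 0$, so
\[
a \lor b^* = (a \lor b^*) \land (a \lor b) = a \lor (b^* \land b) = a \lor 0 = a,
\]
whence $a = 1$, contradicting $a<1$. This shows that $\langle \mathbf 2; \mathbf{3_{dblst}}, \mathbf{4_{dmba}}\rangle$ is not amalgamable, and therefore, by Theorem \ref{TGr}, that $\mathbb{AG}$ does not have the Amalgamation Property. The only genuinely new ingredient at the level of the theorem is the clean simplification of Lemma \ref{256} at $x=b$ and $x=b^*$, which exploits that the operation $'$ fixes both $b$ and $b^*$.
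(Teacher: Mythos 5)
Your proposal is correct and follows essentially the same route as the paper: the same diagram $\langle \mathbf 2; \mathbf{3_{dblst}}, \mathbf{4_{dmba}}\rangle$, the same reliance on Lemma \ref{256} instantiated at $x=b$ and $x=b^*$ (using $b'=b$ and $b{^*}'=b^*$), and the same distributivity computation forcing $a=1$. In fact your write-up is slightly cleaner than the paper's, which contains a typo referring to $\mathbf{3_{klst}}$ where the hypotheses ($a'=1$) are those of $\mathbf{3_{dblst}}$.
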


\begin{proof}
Consider the diagram $\langle \mathbf{2; 3_{dblst}, 4_{dmba}}\rangle$ in $\mathbb{AG}$.
We claim that this diagram is not amalgamable  in $\mathbb{AG}$.  Suppose our claim is false.  Then there exists an algebra $\mathbf A \in \mathbb{AG}$ such that both $\mathbf{3_{klst}}$ and $\mathbf{4_{dmba}}$ are subalgebras of $\mathbf A$.  So, there are $a, b \in \mathbf A $ such that $0<a<1$, $a^*=0$, $a'=1$,  
 $0<b<1$,  $b \lor b^*=1,$ $b'=b$, and $b{^*}' =b^*$.  Hence it is clear that $b^{**}=b$.
Now, from Lemma \ref{256}, we have  
 $a \lor b \lor b' = 1$ and  $a \lor b^* \lor b{^*}' = 1,$  implying 
$a \lor b = 1$ and $a \lor b^* = 1$.  
Thus, 
$ a=a \lor (b \land  b^*)=(a \lor b) \land (a \lor b^*) = 1 \land 1=1$, whence
$a = 1,$  
which is a contradiction since $a \neq 1$,  
proving the claim. It follows that the variety $\mathbb{AG}$ does not have the Amalgamation Property. 
\end{proof}


We are ready to give our main result of the paper.

\begin{Theorem} \label{Main2}
Let $\mathbb{V}$ be a nontrivial subvariety of $\mathbb{AG}.$  Then
\begin{thlist}
\item[1] $\mathbb{V}$ has the Amalgamation Property if $\mathbb{V} \in \{\mathbb{BA}, \mathbb{RDBLS}\rm t, \mathbb{RKLS}\rm t,
\mathbb{DMBA}\}$.
\item[2] 
  $\mathbb{V}$ does not have the Amalgamation Property if

\quad $\mathbb{V} \in \{\mathbb{G},  
\mathbb{V}(\mathbf{3_{dblst}}, \mathbf{4_{dmba}}),  \mathbb{V}(\mathbf{3_{klst}}, \mathbf{4_{dmba}}), \mathbb{AG} \}$.  
          
\end{thlist}
\end{Theorem}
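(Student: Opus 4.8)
The plan is to observe that Theorem \ref{Main2} is now an immediate consequence of the individual results established throughout this section, together with the classification of subvarieties recalled in Corollary \ref{subvarietiesofAG}. By that corollary the lattice of nontrivial subvarieties of $\mathbb{AG}$ is the eight-element Boolean lattice, and the eight varieties named in the two parts of the statement are precisely its members. Hence it suffices to verify the presence or absence of the AP for each of these eight varieties, which has already been done; no fresh argument is needed.

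For part (1), I would simply invoke Theorem \ref{Gamal}, which asserts that each of $\mathbb{BA}$, $\mathbb{RDBLS}\rm t$, $\mathbb{RKLS}\rm t$, and $\mathbb{DMBA}$ has the Amalgamation Property. That theorem rests on the Gr\"atzer--Lakser criterion (Theorem \ref{TGr}), whose hypotheses --- (CEP) and hereditary subdirect irreducibility of the subdirectly irreducibles --- hold for every subvariety of $\mathbb{AG}$ because $\mathbb{AG}$ is a discriminator variety (Theorem \ref{TDisc}). In each of these four cases the only subdirectly irreducible algebras are $\mathbf 2$ together with a single three- or four-element simple algebra, so every diagram of subdirectly irreducibles is trivially amalgamable.

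For part (2), I would collect the four negative results proved above: Theorem \ref{Gamal1} shows that $\mathbb{G}$ fails the AP; Theorem \ref{3dbl4amal} shows that $\mathbb{V}(\mathbf{3_{dblst}}, \mathbf{4_{dmba}})$ fails it; Theorem \ref{3klst4amal} shows that $\mathbb{V}(\mathbf{3_{klst}}, \mathbf{4_{dmba}})$ fails it; and Theorem \ref{AGamal} shows that $\mathbb{AG}$ itself fails it. In each case the strategy was the same: via Theorem \ref{TGr}, exhibit a specific diagram $\langle \mathbf A, \mathbf B, \mathbf C\rangle$ of subdirectly irreducibles with $\mathbf A \leq \mathbf B \cap \mathbf C$ admitting no amalgam, and derive a contradiction from the assumed common overalgebra by forcing the distinguished element $a$ (or $b$) to collapse to $1$ (or $0$).

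Since no new reasoning is required at this final step, there is no genuine obstacle here; the real work lies entirely in the preceding theorems. If I were to single out the delicate part, it would be the failure of the AP for $\mathbb{V}(\mathbf{3_{dblst}}, \mathbf{4_{dmba}})$, whose proof (Theorem \ref{3dbl4amal}) depends on the chain of identity manipulations in Lemmas \ref{LD}--\ref{LI}, notably establishing $a \lor x \lor (a \lor x)' = 1$ so that the putative amalgam is squeezed between the incompatible constraints $a \lor b^* = 1$ and $a \lor b^* = a$. The final theorem merely packages all of these facts into the promised eight-way dichotomy, completing the proof.
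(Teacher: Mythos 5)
Your proposal is correct and matches the paper's own proof: part (1) is exactly Theorem \ref{Gamal}, and part (2) is exactly the combination of Theorems \ref{Gamal1}, \ref{3dbl4amal}, \ref{3klst4amal} and \ref{AGamal}, which is all the paper itself says. The extra commentary you add (the Gr\"atzer--Lakser criterion and the collapse-to-$1$ strategy) accurately summarizes the supporting machinery but is not needed for this final assembly step.
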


\begin{proof}
Observe that (1) is just a restatement of Theorem \ref{Gamal}, while (2) follows immediately from Theorems \ref{Gamal1}, \ref{3dbl4amal}, \ref{3klst4amal} and \ref{AGamal}.
\end{proof}

We end this section with the following remark:
\begin{remark}
 The variety $\mathbb{AGH}$ of Almost Gautama Heyting algebras was also introduced in \cite{CoSa23b}, where it was proved that $\mathbb{AGH}$ is term-equivalent to $\mathbb{AG}$ and their lattices of subvarieties are isomorphic.  {\rm(}For the definition of   the variety $\mathbb{AGH}$ we refer the reader to \cite{CoSa23b}.{\rm)}  Hence, as a consequence of Theorem \ref{Main2}, a theorem analogous to Theorem \ref{Main2} for the variety $\mathbb{AGH}$ also holds.  The actual statement of that theorem can be easily written down by the reader with the help of \cite{CoSa23b}.
\end{remark}

\vspace{.6cm}
\section{Applications}

In this section we give several applications of Theorem \ref{Main2} and of proofs of Theorems in Section \ref{secnum}.   We examine the following properties for the subvarieties of $\mathbb{AG}$ : Transferability property (TP), having enough injectives (EI), Embedding Property, Bounded Obstruction Property and having a model companion.

\begin{definition} Let $\mathbb{V}$ be a variety and let $A \in \mathbb{V}$.
 $\mathbb{V}$ has the {\bf transferability property} {\rm (TP)} or {\bf transferable injection property} {\rm (TIP)} if, for every $A, B, C \in \mathbb{V}$  such that $f : A \mapsto B$ an embedding and $g : A \to C$ a homomorphism, there exist D in V, a homomorphism $f_1 : B \to D$ and an embedding $g_1 : C \to D$ such that $f_1 f = g_1 g.$
\end{definition}

The following result appears in Bacsich \cite{Ba72a}. 

\begin{Theorem} \label{TB} Let $\mathbb V$ be a variety.  Then
 $\mathbb V$ has {\rm (TP)} if and only if V has {\rm AP} and {\rm CEP}.
\end{Theorem}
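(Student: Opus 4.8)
The plan is to prove the two implications separately, relying throughout on the fact that in any variety $\mathbb{V}$ quotients and finite products exist, and on the standard correspondence between homomorphisms and their kernels. I will prove AP and CEP from TP independently, and conversely build the transferral map from an amalgam obtained via AP after preparing the data with CEP.

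For the direction ``$\mathbb{V}$ has AP and CEP $\Rightarrow$ $\mathbb{V}$ has TP'', I would start from an embedding $f:\mathbf A\to\mathbf B$ and a homomorphism $g:\mathbf A\to\mathbf C$ and set $\theta:=\ker g\in\mathrm{Con}(\mathbf A)$. Identifying $\mathbf A$ with the subalgebra $f(\mathbf A)\le\mathbf B$, CEP lets me extend $\theta$ to a congruence $\Theta\in\mathrm{Con}(\mathbf B)$ with $\Theta|_{f(\mathbf A)}=\theta$. Writing $\nu:\mathbf B\to\mathbf B/\Theta$ for the quotient map, the key computation is that $\ker(\nu f)=\theta=\ker g$, so that $g(a)\mapsto\nu f(a)$ is a well-defined embedding $\alpha$ of $g(\mathbf A)$ into $\mathbf B/\Theta$. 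Now $g(\mathbf A)$ embeds into $\mathbf C$ by inclusion and into $\mathbf B/\Theta$ via $\alpha$, so AP applies to this diagram of genuine embeddings and yields $\mathbf D$ together with embeddings $g_1:\mathbf C\to\mathbf D$ and $\beta:\mathbf B/\Theta\to\mathbf D$ agreeing on $g(\mathbf A)$. Setting $f_1:=\beta\nu:\mathbf B\to\mathbf D$ then produces a homomorphism with $f_1f=g_1g$ and $g_1$ an embedding, which is exactly TP.

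For the converse I would obtain CEP and AP from TP separately. For CEP, given $\mathbf A\le\mathbf B$ and $\theta\in\mathrm{Con}(\mathbf A)$, I apply TP to the inclusion $\mathbf A\hookrightarrow\mathbf B$ and the quotient homomorphism $\mathbf A\to\mathbf A/\theta$; TP returns a homomorphism $f_1:\mathbf B\to\mathbf D$ and an embedding $g_1:\mathbf A/\theta\to\mathbf D$, and a short kernel chase, using that $g_1$ is injective, shows $(\ker f_1)|_{\mathbf A}=\theta$, so $\ker f_1$ is the required extension. For AP, given embeddings $f:\mathbf A\to\mathbf B$ and $g:\mathbf A\to\mathbf C$, I apply TP twice: once with $f$ as the embedding and $g$ as the homomorphism, obtaining $\mathbf D_1$ with a homomorphism $p:\mathbf B\to\mathbf D_1$ and an embedding $q:\mathbf C\to\mathbf D_1$ satisfying $pf=qg$; and once with the roles of $f$ and $g$ exchanged, obtaining $\mathbf D_2$ with a homomorphism $p':\mathbf C\to\mathbf D_2$ and an embedding $q':\mathbf B\to\mathbf D_2$ satisfying $q'f=p'g$. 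I then map $\mathbf B$ and $\mathbf C$ into $\mathbf D_1\times\mathbf D_2$ by $b\mapsto(p(b),q'(b))$ and $c\mapsto(q(c),p'(c))$. These two maps agree on $\mathbf A$ by the two equalities just displayed, and each is an embedding because one of its coordinates, namely $q'$ respectively $q$, is injective. Hence $\mathbf D_1\times\mathbf D_2\in\mathbb{V}$ amalgamates the diagram.

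I expect the implication ``AP and CEP $\Rightarrow$ TP'' to be the main obstacle, since it is the only step where both hypotheses must be orchestrated together: the delicate point is to extend $\ker g$ to $\mathbf B$ through CEP in such a way that the induced map on $g(\mathbf A)$ is an embedding into $\mathbf B/\Theta$, which is precisely what allows AP to be invoked on a diagram of honest embeddings. The converse directions are comparatively mechanical, the only genuine idea being the ``apply TP twice and pass to a product'' device, which upgrades the single embedding that TP guarantees into the two simultaneous embeddings demanded by AP.
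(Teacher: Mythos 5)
The paper gives no proof of this theorem to compare against: it is stated as a quoted result of Bacsich (\cite{Ba72a}) and used as a black box. Judged on its own, your proof is correct and complete, and it is essentially the standard argument for Bacsich's theorem. All three components check out. For AP and CEP implying TP, extending $\ker g$ via CEP to $\Theta\in\mathrm{Con}(\mathbf B)$ makes $g(a)\mapsto\nu f(a)$ a well-defined \emph{embedding} of $g(\mathbf A)$ into $\mathbf B/\Theta$ precisely because $\ker(\nu f)=\Theta|_{f(\mathbf A)}$ pulled back along $f$ equals $\ker g$; AP then applies to the honest-embedding diagram $\langle g(\mathbf A),\mathbf C,\mathbf B/\Theta\rangle$, and $f_1:=\beta\nu$ satisfies $f_1f=\beta\alpha g=g_1g$ as required. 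For TP implying CEP, the kernel chase is right: $(\ker f_1)|_{\mathbf A}=\theta$ follows exactly from the injectivity of $g_1$, and $\ker f_1$ is the desired extension. For TP implying AP, the ``apply TP twice and embed into $\mathbf D_1\times\mathbf D_2$'' device works: each of the two product maps is injective because one coordinate ($q'$, respectively $q$) is, and agreement on $\mathbf A$ follows from the two TP equations; closure of $\mathbb V$ under subalgebras, quotients, and finite products, which you invoke at the start, is exactly where the hypothesis that $\mathbb V$ is a variety enters. One small remark: your closing assessment inverts the difficulty as it is usually perceived; the direction AP and CEP $\Rightarrow$ TP is indeed the one needing both hypotheses, but the product trick in TP $\Rightarrow$ AP is the step most often considered the nonobvious idea, so it is worth stating it as carefully as you did rather than calling it mechanical.
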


\begin{Corollary} \label{CorTP}
Let $\mathbb{V}$ be a nontrivial subvariety of $\mathbb{AG}.$  Then\\
 $\mathbb{V}$ has {\rm (TP)} if and only if $\mathbb{V} \in \{\mathbb{BA}, \mathbb{RDBLS}\rm t, \mathbb{RKLS}\rm t, \mathbb{DMBA}\}$.
\end{Corollary}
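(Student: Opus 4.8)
The plan is to derive this corollary as an immediate consequence of Bacsich's characterization (Theorem \ref{TB}) together with the two facts already established in the excerpt: the congruence extension property for all subvarieties of $\mathbb{AG}$ (Corollary \ref{CorCEP}(3)) and the amalgamation classification (Theorem \ref{Main2}). Since Theorem \ref{TB} states that a variety $\mathbb{V}$ has (TP) if and only if it has both AP and CEP, the key observation is that the CEP condition is automatically satisfied throughout our setting, so that (TP) collapses to AP alone.

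Concretely, I would first invoke Corollary \ref{CorCEP}(3), which guarantees that \emph{every} subvariety of $\mathbb{AG}$ has CEP (this in turn rests on $\mathbb{AG}$ being a discriminator variety, Theorem \ref{TDisc}). Hence, for any nontrivial subvariety $\mathbb{V} \subseteq \mathbb{AG}$, the hypothesis ``$\mathbb{V}$ has CEP'' in Theorem \ref{TB} is met unconditionally. Applying Theorem \ref{TB} then yields that $\mathbb{V}$ has (TP) if and only if $\mathbb{V}$ has AP.

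Finally, I would appeal to the main classification, Theorem \ref{Main2}, which asserts that a nontrivial subvariety $\mathbb{V} \subseteq \mathbb{AG}$ has AP precisely when $\mathbb{V} \in \{\mathbb{BA}, \mathbb{RDBLS}\rm t, \mathbb{RKLS}\rm t, \mathbb{DMBA}\}$. Chaining the two equivalences (TP $\Leftrightarrow$ AP, and AP $\Leftrightarrow$ membership in the four-element list) gives exactly the stated biconditional, completing the argument.

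The proof carries no genuine obstacle: it is a two-line composition of previously proved results, and the only thing to verify is that the CEP hypothesis of Bacsich's theorem is in force, which Corollary \ref{CorCEP}(3) supplies at once. The substantive mathematical content lives entirely in Theorem \ref{Main2} (and the amalgamation arguments of Theorems \ref{Gamal1}, \ref{3dbl4amal}, \ref{3klst4amal}, \ref{AGamal} feeding into it); here one is merely harvesting that work through the (TP)$\,\Leftrightarrow\,$(AP $\wedge$ CEP) dictionary.
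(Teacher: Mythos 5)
Your proposal is correct and is exactly the paper's own proof: the paper likewise derives the corollary by combining Corollary \ref{CorCEP} (CEP for all subvarieties), Theorem \ref{Main2} (the AP classification), and Theorem \ref{TB} (Bacsich's TP $\Leftrightarrow$ AP $\wedge$ CEP). You have merely spelled out the chaining of these results, which the paper leaves to a one-line citation.
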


\begin{proof}
Apply Corollary \ref{CorCEP}, Theorem \ref{Main2},  and Theorem \ref{TB}. 
\end{proof}

\begin{definition}
Let $\mathbb V$ be a variety.
\begin{thlist}

\item[1] {\bf $\mathbf A$ is {\bf injective} in $\mathbb V$, if for every $\mathbf A$, $\mathbf B$ in $\mathbb V$, for every embedding $f : B \mapsto C$ and every homomorphism $g: B \to A$, there is a homomorphism $h: C \to A $ such that $hf =g.$}

\item[2]  $\mathbb{V}$ has {\bf enough injectives} {\rm (EI)} if every algebra in $\mathbb{V}$ can be embedded in an injective algebra in $\mathbb{V}$.

\item[3] $\mathbb V$ is {\bf residually small} {\rm (RS)} if there exists a cardinal $\kappa$ such that the size of every    
subdirectly irreducible algebra in $\mathbb V$ is $\leq \kappa$.
\end{thlist}
\end{definition}

The following result is from Banaschewski \cite{Ba70}
\begin{Theorem} \label{TBana} \cite{Ba70}
A variety $ \mathbb{V}$ has {\rm (EI)} if and only if $\mathbb V$ has {\rm (TP)} and is {\rm (RS)}.
\end{Theorem}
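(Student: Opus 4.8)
The plan is to establish the two implications separately, freely using the equivalence $\text{TP} \Leftrightarrow \text{AP} + \text{CEP}$ of Theorem~\ref{TB}. Throughout, call an extension $\mathbf{A} \leq \mathbf{B}$ \emph{essential} if every homomorphism out of $\mathbf{B}$ whose restriction to $\mathbf{A}$ is injective is itself injective. The implication $\text{EI} \Rightarrow \text{TP}$ is a direct diagram chase: given an embedding $f : \mathbf A \to \mathbf B$ and a homomorphism $g : \mathbf A \to \mathbf C$, I would use EI to embed $\mathbf C$ into an injective algebra $\mathbf D$ by an embedding $g_1 : \mathbf C \to \mathbf D$; then $g_1 g$ is a homomorphism from $\mathbf A$ and $f$ is an embedding, so injectivity of $\mathbf D$ yields a homomorphism $f_1 : \mathbf B \to \mathbf D$ with $f_1 f = g_1 g$, and $\langle f_1, g_1, \mathbf D\rangle$ witnesses TP. In particular EI also gives AP and CEP, which I may then use in the remaining arguments.

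For $\text{EI} \Rightarrow \text{RS}$, the observation is that every subdirectly irreducible $\mathbf S$ is an essential extension of a $2$-generated subalgebra. Indeed, its least nonzero congruence (the monolith) is $\mathrm{Cg}(a,b)$ for some $a \neq b$; setting $\mathbf T := \mathrm{Sg}^{\mathbf S}(\{a,b\})$, any homomorphism $h$ with $h|_{\mathbf T}$ injective separates $a$ and $b$, hence does not collapse the monolith, hence is injective by subdirect irreducibility, so $\mathbf T \leq \mathbf S$ is essential. Since EI (with CEP and AP now available) guarantees that each $\mathbf T$ has an injective hull $E(\mathbf T)$ into which all its essential extensions embed over $\mathbf T$, we get $|\mathbf S| \leq |E(\mathbf T)|$. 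As there are only boundedly many isomorphism types of $2$-generated algebras (each a quotient of $\mathbf F_{\mathbb V}(2)$), the cardinal $\kappa := \sup\{\,|E(\mathbf T)| : \mathbf T \text{ is } 2\text{-generated}\,\}$ bounds every subdirectly irreducible algebra, yielding RS.

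The substantive direction is $\text{TP} + \text{RS} \Rightarrow \text{EI}$, which I expect to be the main obstacle. The strategy is to produce, for each $\mathbf A \in \mathbb V$, a maximal essential extension $\mathbf E \geq \mathbf A$ and show it is injective. Residual smallness supplies a cardinal bound on all essential extensions of $\mathbf A$; so a transfinite chain $\mathbf A = \mathbf A_0 \leq \mathbf A_1 \leq \cdots$ that takes a proper essential extension at each successor stage (when one exists) and unions at limits must stabilise at some $\mathbf E$ with no proper essential extension. The crux is to deduce injectivity of $\mathbf E$ from AP and CEP. The pivotal lemma is a retraction property: whenever $\mathbf E \leq \mathbf D$, I would pick by Zorn a congruence $\theta$ on $\mathbf D$ maximal with $\theta|_{\mathbf E} = \Delta_{\mathbf E}$; then $\mathbf E \to \mathbf D/\theta$ is an essential embedding, which, as $\mathbf E$ has no proper essential extension, must be onto and hence an isomorphism, giving a retraction $\mathbf D \to \mathbf E$ fixing $\mathbf E$.

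To conclude injectivity, given an embedding $f : \mathbf X \to \mathbf Y$ and a homomorphism $g : \mathbf X \to \mathbf E$, I would use CEP to extend $\ker g$ to a congruence $\Phi$ on $\mathbf Y$ with $\Phi|_{\mathbf X} = \ker g$, so that $\mathbf X/\ker g$ embeds into $\mathbf Y/\Phi$; then apply AP to amalgamate $\mathbf Y/\Phi$ and $\mathbf E$ over $\mathbf X/\ker g$ into some $\mathbf D$, and compose the resulting map $\mathbf Y \to \mathbf Y/\Phi \to \mathbf D$ with the retraction $\mathbf D \to \mathbf E$ to obtain a homomorphism $\mathbf Y \to \mathbf E$ extending $g$. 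Thus $\mathbf E$ is injective and $\mathbf A \leq \mathbf E$, establishing EI. The hard part is exactly this last direction: the bookkeeping by which RS bounds essential extensions so the tower terminates, together with the retract lemma and the amalgamation/congruence-extension argument certifying that a maximal essential extension is genuinely injective; the forward implications are comparatively routine once this machinery is available.
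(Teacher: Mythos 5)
This theorem is quoted in the paper from Banaschewski \cite{Ba70}; the paper itself contains no proof, so there is no internal argument to compare against --- the comparison must be with the classical proof, and your sketch \emph{is} essentially that classical proof: essential extensions, a transfinite tower terminating in a maximal essential extension, the retraction lemma, and a final amalgamation/congruence-extension step, with Theorem \ref{TB} used to convert (TP) into AP plus CEP. The two forward directions are correct as written: injectivity of an injective extension of $\mathbf{C}$ immediately yields (TP); and the observation that a subdirectly irreducible $\mathbf{S}$ is an essential extension of the $2$-generated subalgebra $\mathrm{Sg}^{\mathbf{S}}(\{a,b\})$, where $(a,b)$ generates the monolith, correctly reduces (RS) to the fact that $2$-generated algebras form a set (you do not even need injective hulls there: \emph{any} injective extension $\mathbf{E}$ of $\mathbf{T}$ absorbs every essential extension of $\mathbf{T}$, by injectivity plus essentiality). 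The retraction lemma (Zorn on congruences of $\mathbf{D}$ restricting to $\Delta_{\mathbf{E}}$) and the concluding CEP--AP--retraction argument are also the standard ones and are argued correctly.

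The one substantive step you assert but never prove is the claim that residual smallness ``supplies a cardinal bound on all essential extensions of $\mathbf{A}$''; this is precisely where (RS) does its work, and it is not bookkeeping. The argument you need is: let $\mathbf{A} \leq \mathbf{B}$ be essential and let $\kappa$ bound the sizes of the subdirectly irreducible members of $\mathbb{V}$. For each pair $a \neq b$ in $A$, Zorn's lemma gives a congruence $\theta_{a,b} \in \mathrm{Con}(\mathbf{B})$ maximal with respect to $(a,b) \notin \theta_{a,b}$; then $\mathbf{B}/\theta_{a,b}$ is subdirectly irreducible, hence of size at most $\kappa$. The natural homomorphism $\mathbf{B} \to \prod_{a \neq b} \mathbf{B}/\theta_{a,b}$ separates the points of $A$ by construction, hence is injective by essentiality of $\mathbf{B}$ over $\mathbf{A}$, so $|B| \leq \kappa^{|A|^{2}}$. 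With this bound in hand your tower (which is legitimate: essentiality is transitive and preserved by unions of chains) must terminate, and the remainder of your argument goes through. So the proposal is the right proof, following the same route as the cited source, with one genuine but fillable gap.
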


\begin{Corollary}
Let $\mathbb{V}$ be a nontrivial subvariety of $\mathbb{AG}.$  
\begin{thlist}
\item[a]  If $\mathbb{V} \in \{\mathbb{V}(\mathbf{2}), \mathbb{V}(\mathbf{3_{dblst}}),             
\mathbb{V}(\mathbf{3_{klst}}), \mathbb{V}(\mathbf{4_{dmba}})\}$, then
$\mathbb{V}$ has EI. 

\item[b]  If $\mathbb{V} \in \{\mathbb{G},  
\mathbb{V}(\mathbf{3_{dblst}}, \mathbf{4_{dmba}}),  \mathbb{V}(\mathbf{3_{klst}}, \mathbf{4_{dmba}}), \mathbb{AG} \}$, then
$\mathbb{V}$ does not have EI. 
\end{thlist}

\end{Corollary}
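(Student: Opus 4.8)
The plan is to deduce the corollary from Banaschewski's Theorem \ref{TBana}---which says a variety has (EI) exactly when it has both (TP) and (RS)---by combining it with the characterization of (TP) from Corollary \ref{CorTP}. First I would record the key preliminary observation that \emph{every} nontrivial subvariety of $\mathbb{AG}$ is residually small. Indeed, by Theorem \ref{Main1} every subdirectly irreducible algebra in $\mathbb{AG}$, and a fortiori in any subvariety of $\mathbb{AG}$, is isomorphic to one of the finite algebras $\mathbf{2}$, $\mathbf{3_{dblst}}$, $\mathbf{3_{dmst}}$, $\mathbf{4_{dmba}}$; hence the cardinality of every subdirectly irreducible algebra in such a variety is at most $4$, giving (RS) with $\kappa = 4$. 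Thus (RS) holds across the board, and the dichotomy in the corollary will be governed entirely by whether (TP) holds.

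For part (a), I would note that the four listed varieties are, in the notation of the earlier sections, exactly $\mathbb{BA}$, $\mathbb{RDBLS}\rm t$, $\mathbb{RKLS}\rm t$ and $\mathbb{DMBA}$, each of which has (TP) by Corollary \ref{CorTP}. Since each is also (RS) by the observation above, Theorem \ref{TBana} immediately yields that each of them has (EI).

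For part (b), the four listed varieties are precisely the nontrivial subvarieties of $\mathbb{AG}$ that are \emph{not} among $\{\mathbb{BA}, \mathbb{RDBLS}\rm t, \mathbb{RKLS}\rm t, \mathbb{DMBA}\}$, so by Corollary \ref{CorTP} none of them has (TP). I would then invoke the forward implication of Theorem \ref{TBana}, namely that (EI) entails (TP), and take its contrapositive: a variety lacking (TP) cannot have (EI). Hence none of these four varieties has (EI).

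I do not anticipate a genuine obstacle here: once (RS) is in hand, the whole argument is a bookkeeping application of Banaschewski's theorem together with the already-established (TP) characterization. The only point meriting care is the verification of (RS), but this is immediate from the explicit finite list of subdirectly irreducible algebras furnished by Theorem \ref{Main1}.
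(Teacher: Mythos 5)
Your proposal is correct and follows exactly the paper's own proof: the paper likewise notes that every subvariety of $\mathbb{AG}$ is residually small and then applies Corollary \ref{CorTP} together with Banaschewski's Theorem \ref{TBana}. Your only addition is spelling out the (RS) verification via the finite list of subdirectly irreducibles from Theorem \ref{Main1}, which the paper leaves as ``clear.''
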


\begin{proof}
It is clear that every subvariety of $\mathbb{AG}$ is RS.  Then apply Corollary \ref{CorTP} and Theorem \ref{TBana}.  
\end{proof}

\vspace{.5cm}
\subsection{Amalgamation Classes}  
\

\medskip

Recall that a  diagram in a class $\mathbb{K}$ of algebras is a quintuple $\langle A, f, B, g, C\rangle$ with $\mathbf A$, $\mathbf B$, $\mathbf C \in \mathbb K$ and $f : \mathbf A \mapsto \mathbf B$ and  $g : \mathbf A \mapsto \mathbf C$ embeddings, and an amalgam in $\mathbb K$ of this diagram is a triple $\langle f_1, g_1, \mathbf D\rangle$ with $\mathbf D \in \mathbb K$ and with $f_1 : \mathbf B \mapsto \mathbf D$ and $g_1 : C \mapsto D$ embeddings such that $f_1f= g_1g$. If such an amalgam exists, then we say that the diagram is {\it amalgamable} in $\mathbb K$.
An algebra $\mathbf{A}$ is called an {\bf amalgamation base} in a class $\mathbb{K}$ of algebras if every diagram $\langle \mathbf{A}, f, \mathbf{B}, g, \mathbf{C}\rangle$ is amalgamable in $\mathbb K$. 
   Let Amal(K) := $\{\mathbf A : \mathbf A \text{ is an amalgamation base in } \mathbb{K}\}$.  Then Amal(K) is called the {\bf amalgamation class} of $\mathbb{K}$.\\

\begin{Theorem} {\rm(Bergman \cite{Be83})} \label{TBerman} Let $\mathbb{V}$ be a finitely generated discriminator variety. Then {\rm Amal}($\mathbb V$) is finitely axiomatizable (i.e., is definable by a finite set of first-order sentences.)
\end{Theorem}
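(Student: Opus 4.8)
The plan is to replace the \emph{a priori} unbounded condition defining an amalgamation base (which quantifies over all $\mathbf B,\mathbf C\in\mathbb V$) by a local, finitary one, and then to show that this local condition is expressible by finitely many first-order sentences. First I would record the structure theory available for a finitely generated discriminator variety $\mathbb V$: the simple members $\mathbf S_1,\dots,\mathbf S_n$ are finitely many and finite; the class $\mathbb V_{\mathrm s}$ of simple algebras is closed under subalgebras (a subalgebra of a simple discriminator algebra is again simple, since the discriminator term restricts to the discriminator operation); $\mathbb V$ is congruence distributive and has CEP; and every $\mathbf A\in\mathbb V$ is representable as a Boolean product of copies of the $\mathbf S_i$, with the simple homomorphic images of $\mathbf A$ corresponding to its stalks.

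The heart of the argument is a \emph{localization lemma}: $\mathbf A$ is an amalgamation base in $\mathbb V$ if and only if every simple homomorphic image (stalk) of $\mathbf A$ is an amalgamation base in the finite class $\mathbb V_{\mathrm s}$. To prove the substantive direction I would take a diagram $\mathbf A\le\mathbf B,\mathbf C$, pass to Boolean product representations, and use Stone duality: the embeddings dualize to continuous surjections of the underlying Boolean spaces. Amalgamating $\mathbf B$ and $\mathbf C$ over $\mathbf A$ then factors into (i) amalgamating the Boolean base spaces, which is always possible, and (ii) amalgamating, over each point of the base, the simple stalks over the corresponding stalk of $\mathbf A$. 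Because there are only finitely many simple algebras and each is finite, step (ii) can be solved uniformly on a finite clopen partition of the base space and patched into a genuine Boolean product, producing an amalgam $\mathbf D\in\mathbb V$. The converse is immediate, since a simple quotient of an amalgamation base is again an amalgamation base.

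With the lemma established I would set $T:=\{\mathbf S_i : \mathbf S_i \text{ is an amalgamation base in } \mathbb V_{\mathrm s}\}$, a subset of a finite set that is decidable by inspecting the finitely many finite diagrams of simples. The lemma reads: $\mathbf A\in\mathrm{Amal}(\mathbb V)$ iff $\mathbf A$ has no simple quotient isomorphic to a member of the finite ``bad'' set $F:=\{\mathbf S_1,\dots,\mathbf S_n\}\setminus T$. For each bad simple $\mathbf S\in F$, the presence of a stalk $\cong\mathbf S$ is a single first-order sentence in the Boolean product language (finitely many elements satisfying the equations and inequations defining the finite algebra $\mathbf S$ at some point of the base), so forbidding all of them is a finite set of first-order sentences; relative to the finite equational base of $\mathbb V$ guaranteed by Baker's theorem, this exhibits $\mathrm{Amal}(\mathbb V)$ as finitely axiomatizable. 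An alternative finalization is to show that both $\mathrm{Amal}(\mathbb V)$ and its complement in $\mathbb V$ are closed under ultraproducts and then apply compactness.

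I expect the localization lemma to be the main obstacle, in two places. First, one must verify that \emph{no global obstruction to amalgamation survives beyond the stalkwise conditions}, so that patching truly suffices; this is exactly where the sheaf-theoretic rigidity of the discriminator structure is needed. Second, the patching in step (ii) requires that the pointwise choice of simple amalgam can be arranged to be clopen-constant over the base space, so that the glued object is an honest Boolean product lying in $\mathbb V$; the finiteness of the set of simples is precisely what makes this possible. Once the local characterization is secured, the passage to finite axiomatizability is routine.
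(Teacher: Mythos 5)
First, a point of order: the paper does not prove this theorem at all --- it is quoted from Bergman \cite{Be83} --- so there is no internal proof to compare against; your argument has to stand on its own, and it does not, because your ``localization lemma'' is false. The fatal direction is precisely the one you wave off as immediate: it is \emph{not} true that a simple quotient of an amalgamation base is again an amalgamation base, and hence $\mathrm{Amal}(\mathbb{V})$ is \emph{not} the class of algebras with no ``bad'' stalk. Here is a counterexample inside this very paper's setting. Take $\mathbb{V}=\mathbb{G}$, whose simple members are $\mathbf 2$, $\mathbf{3_{dblst}}$, $\mathbf{3_{klst}}$, and recall from Theorem \ref{Gamal1} that $\mathbf 2$ is not an amalgamation base (not even within the class of simples, since no simple algebra contains both $\mathbf{3_{dblst}}$ and $\mathbf{3_{klst}}$). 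Put $\mathbf S_n=\mathbf{3_{dblst}}$ for $n$ even and $\mathbf S_n=\mathbf{3_{klst}}$ for $n$ odd, fix nonprincipal ultrafilters $U$ on the evens and $V$ on the odds, and let $\mathbf A\leq\prod_n\mathbf S_n$ consist of those $f$ whose classes in the ultraproducts along $U$ and along $V$ both lie in $\{0,1\}$ and coincide. The map sending $f$ to this common class is a homomorphism onto $\mathbf 2$, so $\mathbf 2$ is a simple quotient of $\mathbf A$ and your proposed axioms would exclude $\mathbf A$ from $\mathrm{Amal}(\mathbb{G})$. But $\mathbf A$ \emph{is} an amalgamation base: (a) by CEP, Zorn, and an ultrafilter-limit argument, every extension $\mathbf E\geq\mathbf A$ has, over the congruence $\theta_0=\ker(\lim_U)$, at least one maximal congruence whose quotient embeds in $\mathbf{3_{dblst}}$ (limit along $U$ of stalks of $\mathbf E$ lying over the even coordinate kernels) and at least one whose quotient embeds in $\mathbf{3_{klst}}$ (limit along $V$); (b) therefore, given any two extensions $\mathbf B,\mathbf C$ of $\mathbf A$, every maximal congruence of $\mathbf B$ can be matched with a maximal congruence of $\mathbf C$ having the same restriction to $\mathbf A$ so that each resulting diagram of simple quotients is amalgamable among the simples, and then the subalgebra of the product, over all matched pairs, of chosen simple amalgams (maps defined coordinatewise, injective by semisimplicity plus CEP) amalgamates $\langle\mathbf A,\mathbf B,\mathbf C\rangle$ in $\mathbb{G}$. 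So membership in $\mathrm{Amal}(\mathbb{V})$ is not determined by the isomorphism types of the stalks: an isolated $\mathbf 2$-stalk (e.g.\ a direct factor $\mathbf 2$) is an obstruction, while a $\mathbf 2$-stalk that is simultaneously a topological limit of $\mathbf{3_{dblst}}$-stalks and of $\mathbf{3_{klst}}$-stalks is not. Encoding this interaction between stalk types and the Boolean topology is the actual content of Bergman's theorem; your sentences axiomatize the wrong class.

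Two further problems would remain even if the lemma were repaired. First, your patching argument for the ``if'' direction cannot be carried out as described, because the sets $\{x:\text{stalk at }x\cong\mathbf S_i\}$ are in general \emph{not} clopen (e.g.\ the algebra of eventually $\{0,1\}$-valued sequences in $\mathbf{3_{dblst}}^{\,\omega}$ has $\mathbf 2$-stalks exactly at the nonprincipal ultrafilters, a non-clopen set); that direction is in fact true, but the correct proof is the congruence-theoretic product-over-matched-pairs construction sketched above, which needs no continuity at all. Second, ``$\mathbf A$ has a stalk isomorphic to $\mathbf S$'' is a statement about the maximal spectrum of $\mathbf A$, quantifying over congruences; it is not, as you assert, visibly ``a single first-order sentence'' in the language of $\mathbb{V}$, so even the passage from your (false) characterization to finitely many first-order axioms is unjustified as written.
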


As a consequence of the above theorem and Theorem \ref{Main2} we obtain the following corollary. 
\begin{Corollary} \label{finax}
If $\mathbb{V} \in \{\mathbb{G},  
\mathbb{V}(\mathbf{3_{dblst}}, \mathbf{4_{dmba}}),  \mathbb{V}(\mathbf{3_{klst}}, \mathbf{4_{dmba}}), \mathbb{AG} \}$, then
 {\rm Amal}($\mathbb{V})$ is finitely axiomatizable. 
\end{Corollary}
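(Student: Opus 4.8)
The plan is to check that each of the four varieties listed in the statement satisfies the two hypotheses of Bergman's theorem (Theorem \ref{TBerman})---that it is finitely generated and that it is a discriminator variety---and then to apply that theorem directly to obtain finite axiomatizability of the amalgamation class. Since the whole statement is advertised as a consequence of Theorem \ref{TBerman} together with Theorem \ref{Main2}, the only real work is to confirm the hypotheses; Theorem \ref{Main2} then serves to explain why it is exactly these four varieties that are of interest.

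First I would record finite generation. Each of the four varieties is, by its very description (see Figure $3$ and Theorem \ref{Main1}), generated by a finite set of finite algebras: $\mathbb{G} = \mathbb{V}(\{\mathbf{3_{dblst}}, \mathbf{3_{klst}}\})$, the two intermediate varieties are $\mathbb{V}(\mathbf{3_{dblst}}, \mathbf{4_{dmba}})$ and $\mathbb{V}(\mathbf{3_{klst}}, \mathbf{4_{dmba}})$ by definition, and $\mathbb{AG}$ itself is generated by its finitely many finite subdirectly irreducible members listed in Theorem \ref{Main1}. Hence each of the four is finitely generated. Next I would argue the discriminator property. By Theorem \ref{TDisc}, $\mathbb{AG}$ is a discriminator variety, so it has a ternary term $t(x,y,z)$ realizing the discriminator function on every subdirectly irreducible (equivalently, simple) member of $\mathbb{AG}$. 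Each of the four varieties is a subvariety of $\mathbb{AG}$, and its class of subdirectly irreducibles is a subclass of that of $\mathbb{AG}$; the same term $t$ therefore acts as the discriminator on all of them, so each subvariety is itself a discriminator variety. With both hypotheses verified, Theorem \ref{TBerman} applies to each of the four varieties $\mathbb{V}$ and yields that $\mathrm{Amal}(\mathbb{V})$ is finitely axiomatizable.

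I would then close by indicating the role of Theorem \ref{Main2}: it isolates precisely these four as the nontrivial subvarieties of $\mathbb{AG}$ that fail the Amalgamation Property, which is what makes the corollary nontrivial. For the remaining four subvarieties (those having AP), every algebra is an amalgamation base, so $\mathrm{Amal}(\mathbb{V}) = \mathbb{V}$ and finite axiomatizability is automatic; this is why the statement is phrased only for the non-AP cases.

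There is no serious obstacle here. The step requiring the most care is the (standard, but worth stating) claim that a subvariety of a discriminator variety inherits the discriminator structure via the same ternary term, together with the observation that each of the four varieties is genuinely \emph{finitely generated}, not merely finitely based; both follow readily from Theorem \ref{TDisc} and Theorem \ref{Main1}, so the bulk of the argument is simply the invocation of Theorem \ref{TBerman}.
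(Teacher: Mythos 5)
Your proposal is correct and takes essentially the same route as the paper, whose proof is the single observation that the corollary follows from Bergman's theorem (Theorem \ref{TBerman}) together with Theorem \ref{Main2}: each of the four varieties is a finitely generated discriminator variety (being a subvariety of the discriminator variety $\mathbb{AG}$ and generated by finitely many finite algebras), so Bergman's theorem applies. Your explicit verification of the two hypotheses, and your closing remark that Theorem \ref{Main2} merely identifies the four non-AP cases where the conclusion is nontrivial (since ${\rm Amal}(\mathbb{V})=\mathbb{V}$ in the AP cases), simply spells out what the paper leaves implicit.
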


The following problems that arise naturally from the preceding corollary are open.\\

{\bf PROBLEM 1}: Can we improve Corollary \ref{finax} further?   In other words,\\
 if $\mathbb{V} \in \{\mathbb{G},  
\mathbb{V}(\mathbf{3_{dblst}}, \mathbf{4_{dmba}}),  \mathbb{V}(\mathbf{3_{klst}}, \mathbf{4_{dmba}}), \mathbb{AG} \}$, 
is {\rm Amal}($\mathbb{V})$ a Horn class, for example?   \\

In this connection, one of the reviewers has remarked that the conclusion of Theorem \ref{TBerman} can be improved to: {\rm Amal}($\mathbb V$) is axiomatizable by a finite set of finite identities.  Therefore, we will state this as a conjecture below.\\

{\bf Conjecture} (due to one of the reviewers): Let $\mathbb{V}$ be a finitely generated discriminator variety. Then {\rm Amal}($\mathbb V$) is axiomatizable by a finite set of identities.\\

{\bf PROBLEM 2}:  Is there an algorithm to find that defining first-order formula, guaranteed by the above corollary for any of the varieties mentioned in the above corollary?\\

{\bf PROBLEM 3}: Find an axiomatization for {\rm Amal}($\mathbb{V})$, where $\mathbb{V}$ is any variety mentioned in Corollary \ref{finax}.
\\

The following theorem is immediate from the proofs of Theorem \ref{Gamal1}, Theorem \ref{3dbl4amal}, Theorem \ref{3klst4amal} and Theorem \ref{AGamal}.

\begin{Theorem} 
If $\mathbb{V} \in \{\mathbb{G},  
\mathbb{V}(\mathbf{3_{dblst}}, \mathbf{4_{dmba}}),  \mathbb{V}(\mathbf{3_{klst}}, \mathbf{4_{dmba}}), \mathbb{AG} \}$, then
$\mathbf 2 \nin Amal(\mathbb{V})$. 
\end{Theorem}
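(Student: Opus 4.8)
The plan is to show that $\mathbf{2}$ fails to be an amalgamation base in each of the four varieties by reusing the non-amalgamable diagrams already constructed in the proofs of Theorems \ref{Gamal1}, \ref{3dbl4amal}, \ref{3klst4amal} and \ref{AGamal}. Recall that $\mathbf{A} \in \mathrm{Amal}(\mathbb{K})$ means that \emph{every} diagram with apex $\mathbf{A}$ is amalgamable; hence to prove $\mathbf{2} \notin \mathrm{Amal}(\mathbb{V})$ it suffices to exhibit a single diagram $\langle \mathbf{2}, f, \mathbf{B}, g, \mathbf{C}\rangle$ in $\mathbb{V}$ that is not amalgamable in $\mathbb{V}$. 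The key observation is that $\mathbf{2}$ is precisely the common subalgebra appearing in each of those four earlier non-amalgamability arguments.

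The main steps would be as follows. First, I would inspect the diagrams used in Section \ref{secnum}: for $\mathbb{G}$ the diagram is $\langle \mathbf{2}, \mathbf{3_{dblst}}, \mathbf{3_{klst}}\rangle$, and for each of $\mathbb{V}(\mathbf{3_{dblst}}, \mathbf{4_{dmba}})$, $\mathbb{V}(\mathbf{3_{klst}}, \mathbf{4_{dmba}})$ and $\mathbb{AG}$ the diagram is $\langle \mathbf{2}, \mathbf{3_{dblst}}, \mathbf{4_{dmba}}\rangle$ (respectively $\langle \mathbf{2}, \mathbf{3_{klst}}, \mathbf{4_{dmba}}\rangle$). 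In every case the apex is $\mathbf{2}$, and the relevant embeddings send $\mathbf{2}$ to the common Boolean skeleton $\{0,1\}$ of the larger algebras. Second, I would invoke the fact, established in the proofs of those theorems, that each of these concrete diagrams admits no amalgam in the respective variety. Since $\mathbf{2}$ is the apex of a non-amalgamable diagram in each $\mathbb{V}$, the definition of amalgamation base is violated, so $\mathbf{2} \notin \mathrm{Amal}(\mathbb{V})$.

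The only point requiring a small amount of care is that the earlier theorems, via Theorem \ref{TGr}, phrase amalgamability in the simplified form $\mathbf{A} \le \mathbf{B} \cap \mathbf{C}$, whereas the amalgamation-base definition is stated with explicit embeddings $f, g$. To bridge this, I would note that the simplified diagrams are exactly the ones obtained by composing the embeddings, and that the contradictions derived in those proofs (e.g.\ $a = 1$ or $b = 1$, contradicting $a \neq 1$ or $b \neq 1$) depend only on the isomorphism types of $\mathbf{2}$, $\mathbf{3_{dblst}}$, $\mathbf{3_{klst}}$, $\mathbf{4_{dmba}}$ and the embeddings of $\mathbf{2}$, not on any additional structure. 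Thus each argument transfers verbatim to the amalgamation-base formulation.

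I expect no genuine obstacle here, since the statement is essentially an immediate reinterpretation of the four earlier non-amalgamability proofs, as the paper itself signals by writing ``immediate from the proofs.'' The proof can therefore be stated concisely: one simply points out that in each of the four cases the witnessing non-amalgamable diagram has $\mathbf{2}$ as its apex, so $\mathbf{2}$ cannot be an amalgamation base in any of $\mathbb{G}$, $\mathbb{V}(\mathbf{3_{dblst}}, \mathbf{4_{dmba}})$, $\mathbb{V}(\mathbf{3_{klst}}, \mathbf{4_{dmba}})$ or $\mathbb{AG}$, which is exactly the assertion $\mathbf{2} \notin \mathrm{Amal}(\mathbb{V})$.
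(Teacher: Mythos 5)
Your proposal is correct and takes essentially the same route as the paper: the paper states this theorem as ``immediate from the proofs'' of Theorems \ref{Gamal1}, \ref{3dbl4amal}, \ref{3klst4amal} and \ref{AGamal}, precisely because each of those proofs exhibits a non-amalgamable diagram whose apex is $\mathbf{2}$, which is exactly the observation you make. Your additional remark that the unique embedding of $\mathbf{2}$ makes the simplified subalgebra formulation and the explicit-embedding formulation of amalgamation bases coincide is a harmless (and correct) elaboration of what the paper leaves implicit.
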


\vspace{.5cm}
\subsection{Embedding Property}

\

\medskip
We will now examine which of the subvarieties of $\mathbb{AG}$ have the (well-known) Embedding Property.

\begin{definition} We say that a variety $\mathbb V$ has the {\bf Embedding Property} if and only if for any two algebras $\mathbf A$ and $\mathbf B$ in $\mathbb V$,
there exists an algebra $\mathbf C$ in $\mathbb V$ into which both $\mathbf A$ and $\mathbf B$ can be embedded.
\end{definition}

\begin{definition} 
An algebra $\mathbf M$ in a variety $\mathbb V$ is said to be {\bf  $\mathbb V$-minimal} if $\mathbf M$ is, up to isomorphism, a subalgebra of every algebra in $\mathbb V$.
\end{definition}
As far as we know, the following lemma is new.

\begin{Lemma} \label{LEmb}
Let a variety $\mathbb V$ possess a $\mathbb V$-minimal algebra $\mathbf M$.  Then $\mathbb V$ has the {\rm (AP)} if and only if  $\mathbb V$ has the Embedding Property. 
\end{Lemma}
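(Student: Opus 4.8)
The plan is to prove the biconditional in two directions, where the nontrivial content lies entirely in showing that the Embedding Property follows from (AP), the converse being essentially trivial. Let me set up both directions.

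\medskip

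\textbf{The easy direction.} First I would dispose of the implication ``Embedding Property $\Rightarrow$ (AP).'' Suppose $\mathbb{V}$ has the Embedding Property and let $\langle \mathbf{A}, \mathbf{B}, \mathbf{C}\rangle$ be a diagram with $\mathbf{A} \leq \mathbf{B} \cap \mathbf{C}$. By the Embedding Property applied to $\mathbf{B}$ and $\mathbf{C}$, there is an algebra $\mathbf{D} \in \mathbb{V}$ into which both $\mathbf{B}$ and $\mathbf{C}$ embed, say via $f_1 : \mathbf{B} \to \mathbf{D}$ and $g_1 : \mathbf{C} \to \mathbf{D}$. The obstacle here is that these two embeddings need not agree on the common subalgebra $\mathbf{A}$, so $\langle f_1, g_1, \mathbf{D}\rangle$ is not automatically an amalgam. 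I expect this direction actually does \emph{not} need the full Embedding Property by itself and instead will require a more careful argument, or perhaps the statement is genuinely only interesting in the forward direction; I would flag that the converse likely uses the $\mathbb{V}$-minimal algebra $\mathbf{M}$ as well. In fact the cleanest route is probably to observe that the Embedding Property together with the existence of $\mathbf{M}$ gives (AP) by taking a diagram and first embedding the apex appropriately, but I anticipate this is where the hypothesis that $\mathbf{M}$ is $\mathbb{V}$-minimal does real work.

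\medskip

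\textbf{The main direction: (AP) $\Rightarrow$ Embedding Property.} Assume $\mathbb{V}$ has (AP) and possesses a $\mathbb{V}$-minimal algebra $\mathbf{M}$. Let $\mathbf{A}, \mathbf{B} \in \mathbb{V}$ be arbitrary. The key idea is that, since $\mathbf{M}$ is (up to isomorphism) a subalgebra of \emph{every} algebra in $\mathbb{V}$, we may regard $\mathbf{M}$ as a common subalgebra of both $\mathbf{A}$ and $\mathbf{B}$. This produces a genuine diagram $\langle \mathbf{M}, \mathbf{A}, \mathbf{B}\rangle$ with $\mathbf{M} \leq \mathbf{A} \cap \mathbf{B}$ in the sense required by the amalgamation definition. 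Applying (AP) to this diagram yields an algebra $\mathbf{C} \in \mathbb{V}$ together with embeddings $f_1 : \mathbf{A} \to \mathbf{C}$ and $g_1 : \mathbf{B} \to \mathbf{C}$ satisfying $f_1 \circ \iota_A = g_1 \circ \iota_B$, where $\iota_A, \iota_B$ are the inclusions of $\mathbf{M}$. In particular both $\mathbf{A}$ and $\mathbf{B}$ embed into $\mathbf{C}$, which is precisely the Embedding Property for the pair $\mathbf{A}, \mathbf{B}$.

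\medskip

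\textbf{Where the difficulty sits.} The whole argument hinges on the $\mathbb{V}$-minimality of $\mathbf{M}$: this is exactly what upgrades a pair of algebras $\mathbf{A}, \mathbf{B}$ (which a priori share nothing) into a bona fide amalgamation diagram with a common apex. The step I expect to require the most care is verifying that fixing an identification of $\mathbf{M}$ inside each of $\mathbf{A}$ and $\mathbf{B}$ is legitimate: since $\mathbf{M}$ is minimal only \emph{up to isomorphism}, one must choose embeddings $\mathbf{M} \hookrightarrow \mathbf{A}$ and $\mathbf{M} \hookrightarrow \mathbf{B}$ and then treat the chosen image as the apex. Once this identification is pinned down the amalgamation definition $\mathbf{A} \leq \mathbf{B} \cap \mathbf{C}$ (in its embedding form) applies verbatim, and the Embedding Property drops out immediately. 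In the present setting any of the candidate varieties has $\mathbf{2}$ as such a minimal algebra, so the hypothesis is readily met, but the lemma is stated abstractly and the proof should be carried out at that level of generality.
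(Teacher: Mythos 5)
Your main direction, (AP) $\Rightarrow$ Embedding Property, is exactly the paper's proof: $\mathbb{V}$-minimality turns an arbitrary pair $\mathbf{A}, \mathbf{B}$ into a genuine diagram $\langle \mathbf{M}, \mathbf{A}, \mathbf{B}\rangle$, and (AP) applied to that diagram produces the common extension. No issues there. The genuine gap is the converse, which you flag, circle around, and never prove. For what it is worth, the paper does not prove it either: its entire treatment of that direction is the sentence ``The converse is trivial.'' Your instinct that something is wrong here is correct, and it can be made precise: at the level of generality at which the lemma is stated, the converse is \emph{false}. Take $\mathbb{V}$ to be the variety of modular lattices. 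The one-element lattice is $\mathbb{V}$-minimal, and $\mathbb{V}$ has the Embedding Property, since $\mathbf{A}$ and $\mathbf{B}$ both embed into the (modular) lattice $\mathbf{A} \times \mathbf{B}$ via $a \mapsto (a, b_0)$ and $b \mapsto (a_0, b)$ for fixed $a_0 \in A$, $b_0 \in B$; yet $\mathbb{V}$ fails (AP) by the theorem of Gr\"{a}tzer, J\'{o}nsson and Lakser \cite{GrJoLa73} (a reference that appears in this paper's own bibliography). So the obstruction you identified, namely that the two embeddings furnished by the Embedding Property need not agree on the apex of a diagram, cannot be repaired from the stated hypotheses; no argument will close this gap as the lemma stands.

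What rescues the intended application (the corollary that the four subvarieties of $\mathbb{AG}$ without (AP) also lack the Embedding Property) is not this lemma but a feature of the specific non-amalgamation proofs: in each of them the non-amalgamable diagram has apex $\mathbf{2}$, and $\mathbf{2}$ is the subalgebra generated by the constants $0$ and $1$. Consequently any two embeddings $f_1 : \mathbf{B} \to \mathbf{D}$ and $g_1 : \mathbf{C} \to \mathbf{D}$ automatically agree on $\mathbf{2}$, so a joint embedding of $\mathbf{B}$ and $\mathbf{C}$ \emph{is} an amalgam of $\langle \mathbf{2}, \mathbf{B}, \mathbf{C}\rangle$; since no amalgam exists, the Embedding Property fails. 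If you want a correct abstract statement to stand in place of the lemma, it is this: when $\mathbf{M}$ is the subalgebra generated by the constants, the Embedding Property is equivalent to the amalgamability of all diagrams with apex $\mathbf{M}$; in particular (AP) implies the Embedding Property (your forward direction), while exhibiting a single non-amalgamable diagram with apex $\mathbf{M}$ refutes it. In that formulation both directions really are routine, and it is all that the paper's corollary needs.
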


\begin{proof} Suppose $\mathbb{V}$ satisfies the hypothesis and has the (AP).  
Let $\mathbf A$ and $\mathbf B$ be two algebras in $\mathbb V$.  
Since $\mathbf M$ is a subalgebra of both $\mathbf A$ and $\mathbf B$, we can consider the diagram $\langle  \mathbf M, \mathbf A, \mathbf B \rangle$.  Then
as $\mathbb V$ has (AP), there exists an algebra $\mathbf C \in \mathbb K$ such that $\mathbf A$ and $\mathbf B$ are subalgebras of $\mathbf C$, implying that $\mathbb{V}$ has the Embedding Property.  The converse is trivial.  
\end{proof}

\begin{Corollary} 
\begin{thlist}
\item[a]  If $\mathbb{V} \in \{\mathbb{V}(\mathbf{2}), \mathbb{V}(\mathbf{3_{dblst}}),             
\mathbb{V}(\mathbf{3_{klst}}), \mathbb{V}(\mathbf{4_{dmba}})\}$, then
$\mathbb{V}$ has the Embedding Property. 

\item[b]  If $\mathbb{V} \in \{\mathbb{G},  
\mathbb{V}(\mathbf{3_{dblst}}, \mathbf{4_{dmba}}),  \mathbb{V}(\mathbf{3_{klst}}, \mathbf{4_{dmba}}), \mathbb{AG} \}$, then
$\mathbb{V}$ fails to have the Embedding Property. 
\end{thlist}
\end{Corollary}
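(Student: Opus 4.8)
The plan is to reduce both parts of the corollary to a single application of Lemma \ref{LEmb} combined with the classification Theorem \ref{Main2}. The only ingredient the lemma demands is that each of the eight nontrivial subvarieties $\mathbb{V}$ of $\mathbb{AG}$ possess a $\mathbb{V}$-minimal algebra, and I claim that $\mathbf{2}$ serves as such an algebra uniformly across all eight cases. Once this is established, the biconditional supplied by Lemma \ref{LEmb} transfers the already-known amalgamation dichotomy of Theorem \ref{Main2} verbatim to the Embedding Property.

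First I would verify that $\mathbf{2}$ is $\mathbb{V}$-minimal for every nontrivial subvariety $\mathbb{V}$. Two things need checking. On the one hand, $\mathbf{2} \in \mathbb{V}$: by Corollary \ref{subvarietiesofAG}, $\mathbb{V}(\mathbf{2}) = \mathbb{BA}$ is the least nontrivial subvariety of $\mathbb{AG}$, so $\mathbf{2}$ lies in every nontrivial $\mathbb{V}$. On the other hand, $\mathbf{2}$ embeds into every nontrivial $\mathbf{A} \in \mathbb{AG}$: in such an $\mathbf{A}$ we have $0 \neq 1$, and the subset $\{0,1\}$ is closed under all the operations, since $0^* = 1$ and $1^* = 0$ for the pseudocomplement of the Stone reduct, $0' = 1$ and $1' = 0$ for the dual quasi-De Morgan operation, while $\lor, \land, 0, 1$ clearly restrict to $\{0,1\}$. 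Hence $\{0,1\}$ is the universe of a subalgebra isomorphic to $\mathbf{2}$, so $\mathbf{2}$ is, up to isomorphism, a subalgebra of every nontrivial member of $\mathbb{V}$.

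With $\mathbf{2}$ identified as a $\mathbb{V}$-minimal algebra, Lemma \ref{LEmb} applies to each of the eight subvarieties and yields the equivalence ``$\mathbb{V}$ has the Embedding Property if and only if $\mathbb{V}$ has the (AP).'' Part (a) then follows by combining this equivalence with Theorem \ref{Main2}(1), which asserts that the four varieties $\mathbb{V}(\mathbf{2})$, $\mathbb{V}(\mathbf{3_{dblst}})$, $\mathbb{V}(\mathbf{3_{klst}})$, $\mathbb{V}(\mathbf{4_{dmba}})$ enjoy the (AP); part (b) follows by combining it with Theorem \ref{Main2}(2), which asserts that the remaining four varieties fail the (AP). Thus the Embedding Property holds exactly for the four varieties in (a) and fails for the four in (b), as required.

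The only delicate point I anticipate is the treatment of the trivial (one-element) algebra, which formally belongs to every variety yet admits no embedding of $\mathbf{2}$ and cannot itself be embedded into any nontrivial algebra. I would handle this by reading the Embedding Property and the notion of $\mathbb{V}$-minimality as stated for nontrivial algebras, as is standard and as is already implicit in the statement and proof of Lemma \ref{LEmb}; with that convention the argument above is complete, and no genuine computation beyond the closure check for $\{0,1\}$ is required.
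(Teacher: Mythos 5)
Your proposal is correct and takes essentially the same approach as the paper: the paper's proof likewise observes that $\mathbf{2}$ is $\mathbb{V}$-minimal in every nontrivial subvariety of $\mathbb{AG}$ and then cites Lemma \ref{LEmb} together with Theorem \ref{Main2}. Your extra details---the explicit check that $\{0,1\}$ is closed under all operations, and the remark about excluding the trivial algebra---merely spell out what the paper leaves implicit.
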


\begin{proof}
Since every subvariety $\mathbb V$ of $\mathbb{AG}$ has the algebra $\mathbf 2$ as $\mathbb V$-minimal, the corollary is immediate from 
Theorem \ref{Main2} and Lemma \ref{LEmb}.
\end{proof}

\smallskip
\subsection{Bounded Obstruction Property}

\

\medskip
 Albert and Burris \cite{AlBu88} have introduced the notion of the bounded obstruction property and shown certain relatioships between the bounded obstructions, model companions, and amalgamation classes.    

\begin{definition} \cite{AlBu88}
\begin{thlist}
\item[1] Let $\mathbb{K}$ be an elementary class, and suppose that the diagram
$\langle \mathbf A, f, \mathbf B, g, \mathbf C\rangle$ has no amalgam in $\mathbb{K}$.  An {\bf obstruction} is any subalgebra $\mathbf C'$ of $\mathbf C$ such that $(\mathbf A', f|_{\mathbf A'}, \mathbf B, g|_{\mathbf A'}, \mathbf{C'})$ has no amalgam in $\mathbb{K}$, where 
$\mathbf{A'} = g^{-1} (\mathbf{C'}).$

\item[2] Let $\mathbb{K}$ be a locally finite elementary class.  $\mathbf{Amal(K)}$ has the {\bf bounded obstruction property (BOP)} with respect to $\mathbb{K}$ if for every $k \in \omega$, there exists an $n \in \omega$ such that the following holds:

If $C \in Amal(\mathbb{K})$, $|\mathbf B| < k$ and the diagram $\langle \mathbf A,f,\mathbf B,g, \mathbf C\rangle$ has no amalgam in $\mathbb{K}$, then there is an obstruction $\mathbf C' \leq \mathbf C$ such that $|\mathbf C'| < n$.
\end{thlist}
\end{definition}

\begin{Theorem} {\rm (Albert and Burris \cite{AlBu88})} \label{AlBu}  Let V be a locally finite variety. Then $Amal(V)$ satisfies the bounded obstruction property if and only if Amal(V) is an elementary class.
\end{Theorem}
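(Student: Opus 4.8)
The plan is to derive the equivalence from the standard ultraproduct characterization of elementary classes: a class of algebras closed under isomorphism is axiomatizable by first-order sentences if and only if both it and its complement, taken inside the ambient variety $\mathbb{V}$ (which is equational, hence itself elementary), are closed under ultraproducts. So I would recast the whole statement as the assertion that $\mathrm{Amal}(\mathbb{V})$ and $\mathbb{V}\setminus\mathrm{Amal}(\mathbb{V})$ are closed under ultraproducts if and only if $\mathrm{Amal}(\mathbb{V})$ satisfies (BOP). Local finiteness would be used throughout to keep finitely generated subalgebras finite and, crucially, uniformly bounded in size once the number of generators is fixed; this is what allows finite syntactic data to stand in for genuine algebraic witnesses, and it is the reason the hypothesis ``$\mathbb{V}$ locally finite'' is indispensable.

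A key preliminary step is to make the predicate ``the diagram $\langle \mathbf A, f, \mathbf B, g, \mathbf C\rangle$ has an amalgam in $\mathbb{V}$'' algebraically concrete. I would pass to the amalgamated coproduct (pushout) $\mathbf B \ast_{\mathbf A} \mathbf C$ formed in $\mathbb{V}$: the diagram amalgamates precisely when the two canonical maps $\mathbf B \to \mathbf B \ast_{\mathbf A} \mathbf C$ and $\mathbf C \to \mathbf B \ast_{\mathbf A} \mathbf C$ are both injective. An obstruction $\mathbf C' \leq \mathbf C$ then says exactly that already the restricted pushout $\mathbf B \ast_{\mathbf A'} \mathbf C'$ (with $\mathbf A' = g^{-1}(\mathbf C')$) collapses a pair of elements. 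Since $\mathbf B$ is finite, such a collapse is a finite event, and local finiteness guarantees that it is realized inside a finitely generated, hence bounded, subalgebra. This reformulation is what converts ``no amalgam, witnessed by a bounded obstruction'' into a first-order schema, since for a fixed finite $\mathbf B$ and a bounded $\mathbf C'$ there are only finitely many isomorphism types of the relevant configuration.

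With this in hand I would run both directions through a single ultraproduct reduction. For elementary $\Rightarrow$ (BOP), I would argue by contraposition: if (BOP) failed at some $k$, then for each $n$ there is a diagram with $|\mathbf B| < k$, no amalgam, and every obstruction of size $\geq n$; forming the ultraproduct over a nonprincipal ultrafilter, the bounded leg $\mathbf B$ collapses to a single finite algebra, the limit leg lands back in $\mathrm{Amal}(\mathbb{V})$ by its assumed closure, and elementarity of $\mathrm{Amal}(\mathbb{V})$ forces the limit diagram to be non-amalgamable yet to admit no bounded obstruction — contradicting that over a fixed finite $\mathbf B$ non-amalgamability is first-order and so must be witnessed boundedly. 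For (BOP) $\Rightarrow$ elementary, I would use the uniform bound $n=n(k)$ to select, for a suitable fixed finite $\mathbf B$, obstructions of size below $n$ uniformly along a sequence of non-amalgamable diagrams; as only finitely many configuration types of size $<n$ occur, one type recurs on an ultrafilter-large set and persists into the ultraproduct, yielding the closure conditions required by the ultraproduct criterion.

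The main obstacle, and the step on which I would spend the most care, is controlling the predicate ``has no amalgam'' under ultraproducts. Unlike embeddings or identities, non-amalgamability quantifies existentially over all of $\mathbb{V}$ and over a possibly infinite amalgam $\mathbf D$, so it is not prima facie first-order and is not automatically preserved by ultraproducts; the entire force of (BOP), combined with local finiteness, is to relocalize this global condition to a finite subconfiguration over a bounded leg $\mathbf B$, at which point the pushout criterion renders it genuinely first-order and the ultraproduct transfer becomes routine. I expect the genuinely delicate bookkeeping to be the interplay of roles in the definition of (BOP) — in particular keeping the hypothesis that the leg $\mathbf C$ lies in $\mathrm{Amal}(\mathbb{V})$ synchronized with the shrinking $\mathbf A' = g^{-1}(\mathbf C')$ of the apex — and verifying that this synchronization survives passage to the ultraproduct; getting that compatibility exactly right is where the proof will either succeed or stall.
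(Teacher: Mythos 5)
The paper itself contains no proof of Theorem \ref{AlBu}: it is quoted verbatim from Albert and Burris \cite{AlBu88} and used as a black box (in fact only the direction ``elementary $\Rightarrow$ BOP'' is invoked, in tandem with Bergman's theorem). So your proposal cannot be measured against an internal argument; it must stand on its own, and it has a genuine gap at its crux. In your direction ``elementary $\Rightarrow$ (BOP)'' you form the ultraproduct of counterexample diagrams $\langle A_n, f_n, B_n, g_n, C_n\rangle$ (no amalgam, $|B_n|<k$, $C_n \in \mathrm{Amal}(\mathbb{V})$, no obstruction of size $<n$) and assert that ``elementarity of $\mathrm{Amal}(\mathbb{V})$ forces the limit diagram to be non-amalgamable,'' deriving your contradiction from the claim that ``over a fixed finite $B$ non-amalgamability is first-order and so must be witnessed boundedly.'' Neither assertion is available. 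Elementarity of $\mathrm{Amal}(\mathbb{V})$ is a statement about which \emph{single algebras} are amalgamation bases; it says nothing about preservation of non-amalgamability of \emph{diagrams} under ultraproducts. And the fallback claim is precisely the statement being proved: BOP at $k$ \emph{is} the assertion that non-amalgamability of diagrams with $|B|<k$ and leg in $\mathrm{Amal}(\mathbb{V})$ is uniformly witnessed by obstructions of bounded size, i.e., is first-order over such legs; invoking it here is circular. Worse, the limit diagram in your construction is provably \emph{amalgamable}: any finite subalgebra $C' \leq \prod C_n/U$ pulls back (by \L{}o\'s plus uniform local finiteness) to finite subalgebras $C_n' \leq C_n$ of bounded size, which for $U$-many $n$ are not obstructions, so every finite restricted diagram of the limit amalgamates; the same compactness argument you use for finite localization (consistency of $\mathrm{Th}(\mathbb{V}) \cup \mathrm{Diag}(B) \cup \mathrm{Diag}(C)$ over the apex) then yields a full amalgam. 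So the contradiction you aim at evaporates. The real content of this direction is to show instead that the ultraproduct algebra $\prod C_n/U$ \emph{fails to be an amalgamation base} --- that is, to construct a new non-amalgamable diagram whose \emph{apex} is the ultraproduct --- thereby contradicting closure of $\mathrm{Amal}(\mathbb{V})$ under ultraproducts; no such construction appears in your sketch.

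There is a second, related gap in the converse direction. To apply your ultraproduct criterion (which is fine as a tool, via Keisler--Shelah) you must show that the complement $\mathbb{V} \setminus \mathrm{Amal}(\mathbb{V})$ is closed under ultraproducts, i.e., that the property ``there exists a diagram with apex $C$ and no amalgam'' persists into ultraproducts. This property involves an existential quantifier over arbitrary, possibly infinite, legs $B, D \in \mathbb{V}$. Your pushout reformulation makes non-amalgamability of a \emph{given} diagram concrete, but it never touches this outer existential quantifier, which is the actual obstacle to first-order definability of membership in $\mathrm{Amal}(\mathbb{V})$, and it is exactly where BOP --- including its side condition that the leg $C$ lies in $\mathrm{Amal}(\mathbb{V})$, which your sketch drops and then tacitly re-uses --- has to do its work. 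As it stands, both directions of your argument stall at the same missing idea: a mechanism for trading the unbounded, global quantification inherent in (non-)amalgamability for bounded, first-order data about a single algebra.
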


The following corollary is immediate from Theorem \ref{AlBu}, Theorem \ref{Main2} and Theorem \ref{finax}.

\begin{Corollary}
Let $\mathbb{V}$ be a nontrivial subvariety of $\mathbb{AG}.$  \\
 If  
\noindent $\mathbb{V} \in \{\mathbb{G},  
\mathbb{V}(\mathbf{3_{dblst}}, \mathbf{4_{dmba}}),  \mathbb{V}(\mathbf{3_{klst}}, \mathbf{4_{dmba}}), \mathbb{AG} \}$, then $Amal(\mathbb V)$ satisfies the bounded obstruction property.           
\end{Corollary}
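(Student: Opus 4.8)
The plan is to obtain this corollary as a direct combination of the Albert--Burris characterization (Theorem \ref{AlBu}) with the finite axiomatizability already recorded in Corollary \ref{finax}. Theorem \ref{AlBu} requires a locally finite ambient variety and asserts that, in that setting, $\mathrm{Amal}(\mathbb{V})$ satisfies the bounded obstruction property precisely when $\mathrm{Amal}(\mathbb{V})$ is an elementary class. Thus the whole argument reduces to checking its two hypotheses for each variety on the list: local finiteness of $\mathbb{V}$, and that $\mathrm{Amal}(\mathbb{V})$ is elementary.

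First I would record that each of the four varieties $\mathbb{G}$, $\mathbb{V}(\mathbf{3_{dblst}}, \mathbf{4_{dmba}})$, $\mathbb{V}(\mathbf{3_{klst}}, \mathbf{4_{dmba}})$, $\mathbb{AG}$ is finitely generated: by its very description each is $\mathbb{V}(\mathcal{S})$ for a finite set $\mathcal{S}$ of finite algebras drawn from $\{\mathbf{2}, \mathbf{3_{dblst}}, \mathbf{3_{klst}}, \mathbf{4_{dmba}}\}$ (for $\mathbb{AG}$ itself this uses Theorem \ref{Main1}, which lists all of its subdirectly irreducibles). Since every finitely generated variety is locally finite, $\mathbb{V}$ is locally finite, so the standing hypothesis of Theorem \ref{AlBu} is met.

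Next I would invoke Corollary \ref{finax}, which states for exactly this list of varieties that $\mathrm{Amal}(\mathbb{V})$ is finitely axiomatizable, i.e.\ definable by a finite set of first-order sentences. A finitely axiomatizable class is in particular an elementary class (it is the model class of the conjunction of those finitely many sentences). This supplies the second hypothesis of Theorem \ref{AlBu}. Applying the direction of that theorem running from ``$\mathrm{Amal}(\mathbb{V})$ is an elementary class'' to ``$\mathrm{Amal}(\mathbb{V})$ satisfies the bounded obstruction property'' then yields the conclusion.

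I do not expect a genuine obstacle here: the substantive content has been front-loaded into Bergman's theorem (Theorem \ref{TBerman}), which underlies Corollary \ref{finax}, and into the Albert--Burris equivalence, so the present statement is a bookkeeping chaining of the two. The role of Theorem \ref{Main2} in the chain is only indirect---it is what singles out these four varieties as precisely those \emph{failing} the AP, for which $\mathrm{Amal}(\mathbb{V})$ is a proper subclass rather than all of $\mathbb{V}$, and it feeds Corollary \ref{finax}. If anything warrants a word of care, it is merely confirming that ``finitely axiomatizable'' in Corollary \ref{finax} is meant in the first-order sense, so that it legitimately certifies the ``elementary class'' hypothesis that Theorem \ref{AlBu} consumes.
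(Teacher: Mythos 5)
Your proposal is correct and follows essentially the same route as the paper, which declares the corollary immediate from the Albert--Burris equivalence (Theorem \ref{AlBu}), Theorem \ref{Main2}, and the finite axiomatizability of ${\rm Amal}(\mathbb{V})$ recorded in Corollary \ref{finax}. The only difference is that you explicitly spell out the two details the paper leaves tacit---local finiteness via finite generation, and the passage from ``finitely axiomatizable'' to ``elementary class''---which is a harmless (indeed welcome) elaboration rather than a different argument.
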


\medskip
\subsection{Model Companions of the subvarieties of $\mathbb{AG}$}

\

\smallskip
If $\mathbb K$ is a class of (first-order) structures, let $S(\mathbb K)$ denote the class of all substructures of $\mathbb K$.
 If T is a set of (first-order) sentences then $\mathbf{M(T)}$ denotes the class of models of T, i.e. the class of all
structures satisfying all the sentences in T. Two theories T and  T$_1$ are
{\bf mutually model-consistent} if $S(\mathbf{M(T)}) = S(\mathbf{M(T_1)})$, i.e. every model of T can be
embedded in a model of T$_1$ and vice-versa.  T$_1$ is {\bf model-complete} if $\mathbf A, \mathbf B \in \mathbf{M((T_1)}$                                                                                                                         and $\mathbf A \in S(\mathbf B)$ imply $\mathbf A$ is an elementary substructure of $\mathbf B$.  T$_1$ is the {\bf model companion} of T
if (i) T and T$_1$ are mutually model-consistent, and (ii) T$_1$ is model-complete.  A class $\mathbb K$ has a {\bf model companion} if Th($\mathbb K$) has a model companion.

The following result is well-known (see, for example, \cite{Li82}).

\begin{Theorem} {\rm(\cite{Li82})} \label{model}   
If a class $\mathbb K$ is locally finite, 
has a finite language and has the amalgamation property, then $\mathbb K$ has a model companion.
\end{Theorem}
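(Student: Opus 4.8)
The plan is to identify the model companion, should it exist, with the first-order theory of the existentially closed members of $\mathbb{K}$, and then to use local finiteness, the finiteness of the language, and the amalgamation property to show that this class of existentially closed structures is elementary. I would lean on the standard model-theoretic fact that, writing $T := \mathrm{Th}(\mathbb{K})$, the theory $T$ possesses a model companion if and only if the class $\mathcal{E}$ of existentially closed models of $T$ is axiomatizable by first-order sentences, in which case the model companion is precisely $\mathrm{Th}(\mathcal{E})$. Thus the whole problem reduces to producing a first-order axiomatization of $\mathcal{E}$, after which mutual model-consistency and model-completeness come almost for free.

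First I would unpack membership in $\mathcal{E}$. A structure $\mathbf{M} \in \mathbb{K}$ is existentially closed when, for every quantifier-free formula $\psi(\bar{x}, \bar{y})$ and every tuple $\bar{a}$ from $M$, solvability of $\exists \bar{y}\,\psi(\bar{a}, \bar{y})$ in some extension of $\mathbf{M}$ inside $\mathbb{K}$ already forces solvability in $\mathbf{M}$ itself. Because $\mathbb{K}$ is locally finite and the language is finite, the substructure $\mathbf{A}_0 = \langle \bar{a}\rangle$ generated by $\bar{a}$ is finite, its complete atomic diagram is captured by a single quantifier-free formula $\delta_{\mathbf{A}_0}(\bar{x})$, and up to isomorphism there are only countably many such finite configurations, each a legitimate first-order object.

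The crucial step, and the one I expect to be the main obstacle, is to show that the extendability condition ``$\exists \bar{y}\,\psi(\bar{a}, \bar{y})$ is realizable in some extension of $\mathbf{M}$ in $\mathbb{K}$'' depends only on the isomorphism type of the finite generated substructure $\mathbf{A}_0$, and not on the ambient $\mathbf{M}$. This is exactly where amalgamation enters: if $\mathbf{A}_0 \leq \mathbf{C} \in \mathbb{K}$ witnesses the existential formula over $\mathbf{A}_0$, then amalgamating $\mathbf{M}$ and $\mathbf{C}$ over $\mathbf{A}_0$ produces an extension of $\mathbf{M}$ realizing it; conversely the finitely generated relevant part of any witnessing extension of $\mathbf{M}$ already furnishes such a $\mathbf{C}$. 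Hence ``realizable over $\mathbf{M}$'' is equivalent to ``realizable in some finite extension of $\mathbf{A}_0$,'' a property of $\mathbf{A}_0$ alone. The delicate point is that this collapses an a priori unbounded quantification over all possible extensions down to data attached to finitely many finite structures; finiteness of the language is what guarantees the resulting conditions are genuinely first-order rather than merely infinitary.

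With this reduction in hand the axiomatization assembles itself: for each finite configuration $\mathbf{A}_0$ and each quantifier-free $\psi$ such that $\mathbf{A}_0$ admits an extension in $\mathbb{K}$ realizing $\exists \bar{y}\,\psi$, I would adjoin the sentence $\forall \bar{x}\,\bigl(\delta_{\mathbf{A}_0}(\bar{x}) \to \exists \bar{y}\,\psi(\bar{x}, \bar{y})\bigr)$; together with $T$ this first-order theory $T^{*}$ cuts out exactly $\mathcal{E}$. It then remains to verify the two clauses of the definition of model companion. Mutual model-consistency holds because every member of $\mathbb{K}$ embeds into an existentially closed one, constructed by the routine union-of-chains argument valid since $\mathbb{K}$ is inductive, while conversely $\mathcal{E} \subseteq \mathbb{K}$. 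Model-completeness of $T^{*}$ then follows by checking that every embedding between two structures in $\mathcal{E}$ is elementary, a back-and-forth argument powered once more by existential closedness together with amalgamation. This completes the reduction of Theorem~\ref{model} to the elementarity of $\mathcal{E}$, whose proof rests squarely on the interplay of local finiteness, finite language, and the amalgamation property.
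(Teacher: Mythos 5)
The first thing to say is that the paper does not prove this theorem at all: it is quoted from Lipparini \cite{Li82} (hence the citation in the theorem header and the preceding sentence ``The following result is well-known''), and is then merely applied to the subvarieties of $\mathbb{AG}$. So there is no internal proof to compare yours against; the comparison can only be with the standard argument in the literature, and your reconstruction does follow that standard route with the key ideas in the right places: reduce existence of a model companion to elementarity of the class $\mathcal{E}$ of existentially closed members; use local finiteness plus finiteness of the language so that the isomorphism type of a finitely generated substructure $\mathbf{A}_0=\langle\bar a\rangle$ is captured by a single quantifier-free formula $\delta_{\mathbf{A}_0}$; use AP exactly where you put it, namely to turn ``$\exists\bar y\,\psi(\bar a,\bar y)$ holds in some extension of $\mathbf{M}$ in $\mathbb{K}$'' into a property of $\mathbf{A}_0$ alone; and then axiomatize $\mathcal{E}$ by the sentences $\forall\bar x\,\bigl(\delta_{\mathbf{A}_0}(\bar x)\to\exists\bar y\,\psi(\bar x,\bar y)\bigr)$ together with $\mathrm{Th}(\mathbb{K})$.

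Two points deserve more care than you give them. First, your argument silently uses closure properties that the bare phrase ``a class $\mathbb{K}$'' does not supply: $\mathbb{K}$ must be closed under finitely generated substructures (so that the witnessing configuration $\langle\bar a,\bar b\rangle$ taken inside an extension of $\mathbf{M}$ actually lies in $\mathbb{K}$), closed under unions of chains (so that existentially closed extensions exist; you assert ``$\mathbb{K}$ is inductive'' without justification), and elementary (so that $\mathrm{Mod}(\mathrm{Th}(\mathbb{K}))=\mathbb{K}$, which is what makes your $T^{*}$ cut out exactly $\mathcal{E}$ and gives $\mathcal{E}\subseteq\mathbb{K}$). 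All of these hold for the locally finite varieties to which the paper applies the theorem, so this is a matter of making hypotheses explicit rather than a substantive flaw, but as written your proof is of a more restrictive statement than the one displayed. Second, your closing step is not a ``back-and-forth powered by existential closedness together with amalgamation'': once one knows that every model of $T^{*}$ is existentially closed in $\mathrm{Mod}(T^{*})$ (immediate, since $\mathrm{Mod}(T^{*})\subseteq\mathbb{K}$ and its members are existentially closed in $\mathbb{K}$), model completeness is exactly Robinson's test, a compactness/elementary-chain argument in which the class-level AP plays no further role.
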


The following corollary is immediate from Theorem \ref{model} and Theorem \ref{Main2}.

\begin{Corollary}
Let $\mathbb{V}$ be a nontrivial subvariety of $\mathbb{AG}.$  

 If $\mathbb{V} \in \{\mathbb{BA}, \mathbb{RDBLS}\rm t, \mathbb{RKLS}\rm t,
\mathbb{DMBA}\}$, then $\mathbb V$ has a model companion.
\end{Corollary}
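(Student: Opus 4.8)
The plan is to invoke Lindstr\"om's criterion, Theorem \ref{model}, which guarantees a model companion for any class that is (i) locally finite, (ii) of finite language, and (iii) has the amalgamation property. Since $\mathbb{V}$ is assumed to be one of the four varieties $\mathbb{BA}$, $\mathbb{RDBLS}\rm t$, $\mathbb{RKLS}\rm t$, $\mathbb{DMBA}$, it suffices to check that each of these three hypotheses holds for $\mathbb{V}$; the conclusion then follows at once.

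First I would dispatch the amalgamation hypothesis, which is exactly where the substantive work of the paper enters: by part (1) of Theorem \ref{Main2}, each of the four listed varieties has the (AP). Next I would verify local finiteness by recalling that each of these varieties is \emph{finitely generated}---indeed $\mathbb{BA} = \mathbb{V}(\mathbf{2})$, $\mathbb{RDBLS}\rm t = \mathbb{V}(\mathbf{3_{dblst}})$ by Theorem \ref{T1}, $\mathbb{RKLS}\rm t = \mathbb{V}(\mathbf{3_{klst}})$ by Theorem \ref{T2}, and $\mathbb{DMBA} = \mathbb{V}(\mathbf{4_{dmba}})$ by definition. A variety generated by a single finite algebra is locally finite, so all four varieties qualify. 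Finally, the finite-language condition is clear, since every member of $\mathbb{AG}$, and hence of each of its subvarieties, is an algebra in the fixed finite signature $\langle \lor, \land, {}^*, {}', 0, 1\rangle$.

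With all three hypotheses of Theorem \ref{model} verified for $\mathbb{V}$, I would conclude directly that $\mathbb{V}$ has a model companion, completing the proof. In contrast to the lemmas of Section \ref{secnum}, there is no genuine obstacle here: the entire content of the corollary is packaged into part (1) of Theorem \ref{Main2}, and the remaining two hypotheses are immediate structural observations about finitely generated varieties in a finite signature. The only point requiring any care is to record explicitly that each of the four varieties is generated by one of the finite algebras of Theorem \ref{Main1}, so that local finiteness is beyond question before Theorem \ref{model} is applied.
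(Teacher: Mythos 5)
Your proposal is correct and follows the paper's own argument exactly: the paper, too, derives the corollary immediately from Theorem \ref{model} and part (1) of Theorem \ref{Main2}. The only difference is that you spell out the local finiteness (each variety being generated by a single finite algebra) and the finite-language hypothesis, which the paper leaves implicit as routine observations.
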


\medskip
\section{Concluding Remarks}
There is a strong connection between (certain variations of) the amalgamation property of varieties of algebras and the (variations of) the interpolation property of their corresponding logics.   
In the forthcoming paper \cite{CoSa24a}, which is a continuation of the present paper, we will investigate the connection between (certain variations of) the amalgamation property of the subvarieties of the variety of Almost Gautama algebras and the interpolation property of their corresponding logics described in \cite{CoSa23b}.  Finally, we would like to mention that a far-reaching generalization of Almost Gautama algebras, called ``Quasi-Gautama algebras,'' is under investigation in \cite{CoSa24b}. \\

\medskip
\small

\end{document}